\definecolor{darkblue}{rgb}{0,0,0.6}
\newcounter{commentcounter}
		\newcommand{\commento}[1]
	{\stepcounter{commentcounter}
		 \textbf{Comment \arabic{commentcounter}}:
	\textcolor{red}{#1} }
\newcommand\ML[1]{\marginpar{\tiny ML: #1}}
\newcommand\fh[1]{\marginpar{\tiny FH: #1}}
\newtheorem{Prop}{Proposition}[subsection]
\newtheorem{Thm}[Prop]{Theorem}
\newtheorem{Lemma}[Prop]{Lemma}
\newtheorem{Cor}[Prop]{Corollary}
\newtheorem*{Thm*}{Theorem}
\newtheorem*{Prop*}{Proposition}
\newtheorem*{corollary*}{Corollary}
\newtheorem*{maintheorem}{Main theorem}
\theoremstyle{name}
\theoremstyle{defs2}
\newtheorem{conj}[Prop]{Conjecture}
\newtheorem*{conjbbc*}{Block Borel conjecture}
\newtheorem*{conjbc*}{Central part of Burghelea's conjecture}
\newtheorem*{conj*}{Conjecture}
\newtheorem*{mconj*}{Main Conjecture}
\theoremstyle{definition}
\newtheorem{Def}[Prop]{Definition}
\newtheorem*{ackn}{Acknowledgements}
\theoremstyle{remark}
\newtheorem{Rmk}[Prop]{Remark}
\newtheorem{Question}{Question}
\theoremstyle{claim}
\newtheorem*{claim}{Claim}
\newcommand{\G}{\mathcal{G}}
\newcommand{\CP}{\mathcal{CP}}
\newcommand{\CBC}{\mathcal{CBP}}
\newcommand{\KC}{\mathcal{KP}}
\newcommand{\IKP}{\mathcal{IKP}}
\newcommand{\GC}{\mathcal{VP}}
\newcommand{\IVP}{\mathcal{IVP}}
\newcommand{\BC}{\mathcal{BP}}
\newcommand{\Ep}{\widetilde E^+}
\newcommand{\R}{\mathbb{R}}
\newcommand{\wt}{\widetilde}
\newcommand{\Ext}{\mathrm{Ext}}
\newcommand{\B}{\mathrm{B}}
\newcommand{\TOP}{\mathrm{Top}}
\newcommand{\DIFF}{\mathrm{Diff}}
\newcommand{\TOPtw}{\wt{\TOP}}
\newcommand{\DIFFtw}{\wt{\DIFF}}
\newcommand{\VC}{\mathrm{vc}}
\newcommand{\LFJ}{\mathcal{L}\calfj^\fib_{\VC}[\tfrac{1}{2}]}
\newcommand{\LFJfin}{\mathcal{L}\calfj^\fib_{\mathrm{fin}}[\tfrac{1}{2}]}
\newcommand{\BSO}{\B\mathrm{SO}}
\newcommand{\CAT}{\mathrm{Cat}}
\newcommand{\PL}{\mathrm{PL}}
\newcommand{\BSTOP}{\B\mathrm S\TOP}
\newcommand{\BTOPtw}{\B\TOPtw}
\newcommand{\Sym}{\mathrm{Sym}}
\newcommand{\Gd}{\mu_\Gamma}
\newcommand{\xto}{\xrightarrow}
\newcommand{\lto}{\longrightarrow}
\newcommand{\Q}{\mathbb{Q}}
\newcommand{\Z}{\mathbb{Z}}
\newcommand{\Hom}{\mathrm{Hom}}
\newcommand{\C}{C}
\newcommand{\cd}{\mathrm{cd}}
\newcommand{\tr}{\mathrm{tr}}
\newcommand{\Out}{\mathrm{Out}}
\newcommand{\s}{\mathcal{S}}
\newcommand{\id}{\mathrm{id}}
\newcommand{\ab}{\mathrm{ab}}
\newcommand{\calfj}{\mathcal{F}\!\mathcal{J}}
\newcommand{\fib}{\mathrm{fib}}
\newcommand{\pbc}{identity block Borel conjecture}
\newcommand{\e}{e^{fw}}
\newcommand{\Sp}{\mathrm{Sp}}
\newcommand{\Ss}{\mathbb S}
\newcommand{\fin}{\mathrm{fin}}
\DeclareMathOperator*{\colim}{colim}
\DeclareMathOperator*{\hocolim}{hocolim}
\newcommand\lra{\longrightarrow}
\newcommand{\bR}{\mathbb{R}}
\keywords{aspherical closed manifolds, tautological classes}
\subjclass[2010]{55R20, 55R40, 55R60, 57P10}
\title{A Vanishing theorem for tautological classes of aspherical manifolds}
\date{\today}
\author[F.~Hebestreit]{Fabian Hebestreit}
\address{University of Bonn, Mathematical Institute, Endenicher Allee 60, 53115 Bonn, Germany}
\email{f.hebestreit@math.uni-bonn.de}
\author[M.~Land]{Markus Land}
\address{University of Regensburg, NWF I -- Mathematik, 93049 Regensburg, Germany}
\email{markus.land@mathematik.uni-regensburg.de}
\author[W.~L\"uck]{Wolfgang L\"uck}
\address{University of Bonn, Mathematical Institute, Endenicher Allee 60, 53115 Bonn, Germany}
\email{wolfgang.lueck@him.uni-bonn.de}
\author[O.~Randal-Williams]{Oscar Randal-Williams}
\address{University of Cambridge, DPMMS, Wilberforce Road, Cambridge CB3 0WB, UK}
\email{o.randal-williams@dpmms.cam.ac.uk}
\begin{document}

\begin{abstract}
Tautological classes, or generalised Miller--Morita--Mumford classes, are basic characteristic classes of smooth fibre bundles, and have recently been used to describe the rational cohomology of classifying spaces of diffeomorphism groups for several types of manifolds. We show that rationally tautological classes depend only on the underlying topological block bundle, and use this to prove the vanishing of tautological classes for many bundles with fibre an aspherical manifold.
\end{abstract}

\maketitle
\setcounter{tocdepth}{1}
\tableofcontents

\typeout{------------------- Introduction -----------------}
\section{Introduction}
\label{sec:introduction}

Spaces of automorphisms of manifolds have long been an active topic of research in topology, and various techniques have emerged for their study. In the case of high-dimensional manifolds, there are two competing approaches: On the one hand, one tries to understand the difference between the space of diffeomorphisms and the space of homotopy self-equivalences by introducing yet another space, the space of block diffeomorphisms, whose difference to homotopy equivalences is measured by surgery theory and whose difference to diffeomorphisms is measured, at least in a range depending only on the dimension of the manifold, in terms of Waldhausen's $A$-theory; see \cite{WW2} for a modern approach. An example of this approach being successfully employed is \cite{FH}, where Farrell and Hsiang investigate the rational homotopy type of various spaces of automorphisms, and in particular determine the rational homotopy groups of the space of homeomorphisms of aspherical manifolds in a range. This has a recent integral refinement in \cite{ELPUW}.

On the other hand, with the work of Madsen, Tillmann, and Weiss on Mumford's conjecture, a new line of investigation emerged. This approach is based on cobordism theory and tries to describe the cohomology of the classifying space of diffeomorphisms in terms of a certain Thom spectrum -- an object accessible to the computational methods of algebraic topology. This method is particularly well suited to studying specific cohomology classes, the generalised Miller--Morita--Mumford classes. Since they are central to the present article let us briefly recall their definition.

Given a smooth, oriented fibre bundle $p\colon E \rightarrow B$ with typical fibre a compact, closed, oriented manifold $M$ of dimension $d$, a coefficient ring $R$, and a characteristic class $c \in H^k(\BSO(d);R)$, the associated \emph{Miller--Morita--Mumford class}, or \emph{tautological class}, is the cohomology class
\[\kappa_c(p) = p_! (c(T_v(p))) \in H^{k-d}(B;R)\]
obtained by applying the Gysin homomorphism $p_!$ associated to $p$ to the class $c(T_v(p)) \in H^k(E;R)$ given by evaluating the characteristic class $c$ on the vertical tangent bundle $T_v(p)$ of the map $p$. In particular, the tautological classes are defined on the universal smooth oriented fibre bundle with fibre $M$, whose base is the classifying space $\B\mathrm{Diff}^+(M)$ for the topological group of orientation-preserving diffeomorphisms of $M$, yielding universal classes
\[\kappa_c(M) \in H^{k-d}(\B\mathrm{Diff}^+(M);R).\]
These classes were first considered in the case that $M$ is an oriented surface, where they have been studied in detail by both algebraic geometers and topologists \cite{Mumford, Morita, Miller, Looijenga, Faber}. They were the subject of Mumford's conjecture describing the rational cohomology of the stable moduli space of Riemann surfaces, which was resolved in the work of Madsen, Tillmann, and Weiss \cite{MW, MT}.
In higher dimensions, tautological classes have been of recent interest due to the work of Galatius and Randal-Williams, culminating in \cite{GRW3}, which describes the rational cohomology of $\B\DIFF^+(M)$ in terms of tautological classes for certain simply connected manifolds $M$ of dimension $2n \geq 6$, in a range bounded by roughly half the genus of $M$; the genus of $M$ refers to the number of $S^n\times S^n$ connect-summands of $M$. In fact, 
already their work in \cite{GRW2} and \cite{GRW1} implies that any oriented $2n$-manifold of genus at least 11 has non-trivial tautological classes!
\newline

The goal of the present paper is to discuss tautological classes for aspherical manifolds. Aspherical manifolds of dimension $2n > 2$ have vanishing genus in the sense just described (a $S^n \times S^n$ connect-summand gives elements of $n$th homotopy whose nontriviality may be detected using the intersection form) so the results mentioned above reveal nothing in this case. 

Our main theorem will be stated in terms of the following two conjectures.
\begin{conjbbc*}
For a closed aspherical manifold $M$ the canonical map $\TOPtw(M) \to \mathcal G(M)$ is a weak homotopy equivalence.
\end{conjbbc*}

Here $\TOPtw(M)$ denotes the realisation of the semi-simplicial set of \emph{block homeomorphisms} of $M$, and $\mathcal G(M)$ denotes the space of self homotopy equivalences of $M$. The block Borel conjecture is a strong form of the uniqueness part of the classical Borel conjecture: that conjecture says that a homotopy equivalence between aspherical manifolds is homotopic to a homeomorphism; the block Borel conjecture says that the space of homotopy equivalences between homeomorphic aspherical manifolds is equivalent to the space of block homeomorphisms. 
For the purposes of this introduction the most important feature of this conjecture is that for manifolds of dimension at least $5$ it is implied by the Farrell--Jones conjectures, and thus is known for large swathes of aspherical manifolds by the work of Bartels, Reich, L{\"u}ck, and many others \cite{BLR, BL, KLR}. 

Another input into our work is Burghelea's conjecture \cite{Burghelea}, the part of which relevant for us reads as follows. 

\begin{conjbc*}
For a closed aspherical manifold $M$ and a central element $g \in \pi_1(M)$ the rational cohomological dimension with trivial coefficients of 
$\pi_1(M) / \langle g \rangle$ is finite.
\end{conjbc*}
This conjecture is not as well studied as the Farrell--Jones conjecture, but is still known to hold for a large class of groups. Finally, let $\DIFF_h(M) \leq \DIFF(M)$ denote the subgroup of those diffeomorphisms \emph{homotopic} to the identity; recall that a smooth fibre bundle with fibre $M$ has structure group $\DIFF_h(M)$ if and only if its fibre transport along any loop is homotopic to the identity. With these preliminaries out of the way we can state our main result.

\begin{maintheorem}
If an oriented, smooth, closed, aspherical manifold $M$ of dimension $d$ satisfies the central part of Burghelea's conjecture and the block Borel conjecture, then for all smooth $M$-fibre bundles $p\colon E \to B$ with structure group $\DIFF_h(M)$
, we have
\[ 0 = \kappa_c(p) \in H^{k-d}(B;\Q) \]
for all $c \in H^k(\BSO(d);\Q)$ with $k \neq d$. 
\end{maintheorem}

In particular, as any smooth $M$-fibre bundle over a simply connected base space admits a reduction of its structure group to the identity component of $\DIFF(M)$ 
the tautological classes of such bundles vanish.

The hypothesis on the structure group cannot be completely relaxed. When $M$ is an orientable surface of large genus it is well known (see e.g.\ \cite{Miller}) that many tautological classes are non-zero, so by taking products we obtain examples of aspherical manifolds of any even dimension having non-zero tautological classes.



This theorem in particular recovers several recent vanishing theorems of Bustamante, Farrell, and Jiang \cite{BFJ}, but applies to a much wider class of manifolds. At the end of the paper we shall describe conditions on the fundamental group of an aspherical manifold which are known to imply that $M$ satisfies both relevant conjectures. 

This theorem is not the strongest or most general result that we prove, but is the most easily stated and has the least technical hypotheses. We shall prove similar vanishing results under conditions weaker than the block Borel conjecture, these will also hold for topological block bundles, in certain situations will extend to cover the case $k=d$ or diffeomorphisms not homotopic to the identity, and we also have results for more general coefficients. To give some idea of these statements it will be helpful to first go through the main ingredients of the proof, but the strongest formulations will only be given in the body of the text.

\subsection{Characteristic classes for topological block bundles}

The first step in our proof is to show that tautological classes can be defined 
not just for smooth fibre bundles but for topological block bundles. This extends earlier work of Ebert and Randal-Williams \cite{ERW, RW}, where among other things they show that rational tautological classes can be defined both for topological fibre bundles and for smooth block bundles. 

To this end we will consider the universal oriented $M$-block bundle $\pi \colon \Ep(M) \to \B\TOPtw{\mathstrut}^{\>\!+}(M)$ and construct an oriented stable vertical tangent bundle $T_v^s (\pi) \colon \Ep(M) \to \BSTOP$. 
We also construct a fibrewise Euler class $\e(\pi) \in H^d(\Ep(M);\Z)$. In fact, we construct this class for any oriented fibration whose fibre is a Poincar\'e duality space of formal dimension $d$. 
By pulling cohomology classes back along the map 
\[(T^s_v(\pi), \e(\pi)) \colon \Ep(M) \lto \BSTOP \times K(\Z,d)\] 
and applying the Gysin homomorphism, we can associate 
\[\kappa_c(M) = \pi_!((T^s_v(\pi), \e(\pi))^*(c)) \in H^{k-d}(\B\TOPtw{\mathstrut}^{\>\!+}(M);R)\]
to a cohomology class $c \in H^k(\BSTOP \times K(\Z,d);R)$. 

These define characteristic classes of oriented block bundles, and together with the stable vertical tangent bundle and fibrewise Euler class can be pulled back from the universal oriented $M$-block bundle to any other. 
On a block bundle which arises from a smooth fibre bundle $p \colon E \to B$, $T_v^s(p)$ is the stabilisation of the vertical tangent bundle, $\e(p)$ is the Euler class of the vertical tangent bundle, and the Gysin homomorphism is the usual one, so these tautological classes reduce to those of the same name defined earlier. We will show that they also agree with the constructions of \cite{ERW} and \cite{RW}. This comparison, in particular, shows that the classes defined in \cite{ERW} lie in the image of the Gysin homomorphism, a point not addressed in \cite{ERW} but essential for our work. 

Recall now that $H^*(\BSO(d);\mathbb{Q})$ is generated by Pontryagin and Euler classes, and by work of Novikov, Kirby and Siebenmann the rational Pontryagin classes are pulled back from $\BSTOP$. Therefore, to establish a vanishing result for rational tautological classes it suffices to consider topological block bundles. That is, writing $\TOPtw_h(M) \leq \TOPtw(M)$ for those components represented by homeomorphisms homotopic to the identity, it is enough to show that 
\[0 = \kappa_c(M) \in H^{k-d}(\BTOPtw_h(M);\mathbb{Q})\] 
for all $c \in H^k(\BSTOP \times K(\mathbb{Z},d);\mathbb{Q})$ such that $k \neq d$. Assuming the two conjectures stated earlier, we will show the vanishing of these classes, as we now explain.

\subsection{Vanishing results}

To this end let us fix a closed, connected, oriented, aspherical topological manifold $M$ which satisfies the block Borel conjecture, i.e.\ such that the map
\[\TOPtw(M) \longrightarrow \mathcal{G}(M)\]
is a weak equivalence. This means that topological $M$-block bundles which are fibre homotopy equivalent are in fact equivalent as block bundles, and in particular means that $\TOPtw_h(M)$ agrees with the component of the identity $\TOPtw_0(M)$. As discussed in the last section the stable vertical tangent bundle of a manifold bundle only depends on the underlying topological block bundle, so it is fibre homotopy invariant among $M$-bundles. This conclusion was obtained in \cite{BFJ} by a 
different route. Together with our construction of the fibrewise Euler class, it 
implies that rational tautological classes for $M$-fibre bundles are invariant under fibre homotopy equivalences and therefore vanish on fibre homotopically trivial bundles.

To obtain a criterion for fibre homotopy triviality note that for any connected, aspherical complex $X$ a straightforward computation shows 
\[\pi_k\mathcal(\G(X)) = \left\{\begin{matrix}\Out(\pi_1(X)) & k = 0 \\ \C(\pi_1(X)) & k = 1 \\ 0 & k \geq 2\end{matrix}\right.\] 
where $\Out$ denotes the outer automorphism group and $\C$ the centre of a given group. As our results only concern the homotopy type (resp.\ the homology) of the classifying space of $\TOPtw_h(M)$ it is in fact enough that a weaker property than the block Borel conjecture should hold: that the map
\[\B\TOPtw_h(M) \longrightarrow \B\G_0(M)\]
be a weak equivalence (resp.\ induce an isomorphism on $R$-homology, for some ring of coefficients $R$). We dub this the \emph{{\pbc} (resp.\ with $R$-coefficients)}.
In distinction with the block Borel conjecture, it is implied by the Farrell--Jones conjectures also when the aspherical manifold in question is of dimension $4$. Now if $\C(\pi_1(M)) = 0$, then $\B\G_0(M)$ is contractible; we refer to such manifolds as \emph{centreless} and a block bundle with centreless, aspherical fibre is thus fibre homotopically trivial. We therefore find: 

\begin{Thm*}
If $M$ is a closed, oriented, aspherical, centreless manifold which satisfies the {\pbc} with $R$-coefficients, then 
\[0 = \kappa_c(M) \in H^{k-d}(\BTOPtw_h(M);R)\] 
for all $c \in H^k(\BSTOP \times K(\mathbb{Z},d);R)$ such that $k \neq d$.
\end{Thm*}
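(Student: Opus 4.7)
My plan is to combine the \pbc{} with a computation of $\G_0(M)$ to show that $\BTOPtw_0(M)$ is itself contractible, at which point the vanishing of the relevant cohomology group is immediate.

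First I will apply the \pbc{}, which provides a weak equivalence $\TOPtw_0(M) \to \G_0(M)$ and hence a weak equivalence $\BTOPtw_0(M) \simeq \B\G_0(M)$ on classifying spaces. Next I will use the homotopy-group computation for $\G(X)$ recalled just before the theorem: applied to the identity component of $\G(M)$ for the aspherical $M$, it gives that $\G_0(M)$ has all homotopy groups vanishing except $\pi_1 = \C(\pi_1(M))$. Consequently $\G_0(M)$ is a $K(\C(\pi_1(M)),1)$ and $\B\G_0(M)$ is a $K(\C(\pi_1(M)),2)$. The centreless hypothesis forces $\C(\pi_1(M)) = 0$, collapsing $\B\G_0(M)$, and hence also $\BTOPtw_0(M)$, to a point.

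It then suffices to observe that for $c \in H^k(\BSTOP \times K(\Z,d);R)$ the class $\kappa_c(M)$ lies in $H^{k-d}(\BTOPtw_0(M);R)$, which is zero by degree when $k < d$ and zero by contractibility of the base when $k > d$, while the case $k = d$ is excluded by hypothesis. There is no real obstacle here once the groundwork from the preceding subsections is in hand; the substantive content is the construction of tautological classes for topological block bundles together with the reduction, via the \pbc{}, to fibre-homotopy data, both of which have already been carried out earlier in the paper.
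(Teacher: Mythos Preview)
Your proposal is correct and is essentially the same as the paper's argument: the paper observes (just before the theorem in the introduction, and again as Corollary~\ref{centreless} after Proposition~\ref{homotopy type of G(X)}) that the \pbc{} gives $\BTOPtw_0(M)\simeq \B\G_0(M)\simeq \B^2\C(\pi_1(M))$, which is contractible in the centreless case, so $H^{k-d}(\BTOPtw_0(M);R)=0$ for $k\neq d$. The only superfluous step in your write-up is the separate ``by degree'' justification for $k<d$, since contractibility already kills cohomology in all nonzero degrees.
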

The consequences of this theorem for smooth manifold bundles, while not explicitly stated there, were essentially already obtained in \cite{BFJ}. And while the methods are similar as well, our approach offers a novel perspective: The tautological classes of bundles with centreless, aspherical fibre and fibre transport homotopic to the identity vanish because the universal space in which they are defined is contractible by the block Borel conjecture. As explained above, the result in particular implies the vanishing of all rational tautological classes (in positive degree) for a smooth fibre bundle satisfying the assumptions. The implications for the \emph{integral} tautological classes of smooth fibre bundles are somewhat delicate, as $H^*(\BSTOP\times K(\mathbb{Z},d);\Z) \to H^*(\BSO(d);\Z)$ is not surjective. Instead of their vanishing, one only obtains (somewhat inexplicit) universal bounds on their order.

The condition that $c$ not have degree $d$ cannot be removed, as already observed in \cite{BFJ}: Because every bordism class can be represented by a negatively curved manifold, see \cite{Ontaneda}, for $c \in H^d(\BSO(d);\mathbb Q)$ the classes $\kappa_c(M) = \langle c(TM),[M]\rangle$ do not generally vanish on aspherical manifolds. However, any negatively curved manifold is centreless: We will now see that stronger results may be obtained for an aspherical manifold that satisfies the {\pbc} with $\Q$-coefficients, whose fundamental group has non-trivial centre, and in addition satisfies the central part of Burghelea's conjecture.

We begin by observing that by the {\pbc} with $R$-coefficients the underlying fibration of the universal $M$-block bundle with fibre transport homotopic to the identity is $R$-homology equivalent to
\[\pi\colon \B(\Gamma / \C(\Gamma)) \lra \B^2 \C(\Gamma)\]
where we have abbreviated
$\Gamma := \pi_1(M)$ and the map $\pi$ classifies the central extension
\[1 \longrightarrow \C(\Gamma) \longrightarrow \Gamma \longrightarrow \Gamma / \C(\Gamma) \longrightarrow 1.\]
This observation relates the Gysin map for the universal block bundle over $\BTOPtw_h(M)$ with the central part of Burghelea's conjecture, which we shall use to show the following.

\begin{Thm*}
If $\Gamma$ is a rational Poincar\'e duality group of dimension $d$ with non-trivial centre, which satisfies the central part of Burghelea's conjecture, then the Gysin map
\[\pi_!\colon H^*(\B(\Gamma / \C(\Gamma)); \mathbb Q) \longrightarrow H^{*-d}(\B^2 \C(\Gamma); \mathbb Q)\]
vanishes. If $C(\Gamma)$ is finitely generated, then the same statement holds integrally.
\end{Thm*}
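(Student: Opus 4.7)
The plan is to reduce the vanishing of $\pi_!$ to the analogous statement for the simpler fibration whose base is $\B^2\langle g\rangle \simeq \mathbb{CP}^\infty$, where Burghelea's conjecture forces vanishing essentially via the projection formula, and then to recover the full statement by detection as $g$ varies through $\C(\Gamma)$.

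Concretely, for any $g \in \C(\Gamma)$ of infinite order --- such elements exist because $\C(\Gamma)$ is non-trivial and torsion-free (as $\Gamma$ is a torsion-free PD group) --- the inclusion $\iota\colon \langle g\rangle \hookrightarrow \C(\Gamma)$ together with the surjection $\Gamma/\langle g\rangle \twoheadrightarrow \Gamma/\C(\Gamma)$ form a morphism of central extensions of $\Gamma$. Taking classifying spaces produces a square
\[
\xymatrix{
\B(\Gamma/\langle g\rangle) \ar[r]^-{\tilde\nu} \ar[d]_-{\pi'} & \B(\Gamma/\C(\Gamma)) \ar[d]^-{\pi} \\
\B^2 \langle g\rangle \ar[r]_-{\nu} & \B^2 \C(\Gamma)
}
\]
whose columns are the classifying fibrations of the two central extensions, each with fibre $\B\Gamma$. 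I would verify this to be a homotopy pullback by computing, via the long exact sequence of its fibration, that the pullback of the right column along $\nu$ is a $K(\Gamma/\langle g\rangle, 1)$ (here injectivity of $\iota$ enters). Naturality of the Gysin map for oriented fibrations then yields the identity $\nu^* \circ \pi_! = \pi'_! \circ \tilde\nu^*$.

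To show $\pi'_! = 0$: the ring $H^*(\B^2 \langle g\rangle;\Q) = \Q[t]$ with $|t|=2$ is unbounded, but by Burghelea's conjecture $H^*(\B(\Gamma/\langle g\rangle);\Q)$ vanishes above $\cd_\Q(\Gamma/\langle g\rangle) < \infty$. A nonzero value $\pi'_!(\alpha) = c\,t^k$ would, by the projection formula, force $\pi'_!(\alpha \cdot (\pi')^*(t^N)) = c\,t^{k+N}\ne 0$ and hence $\alpha \cdot (\pi')^*(t^N)\ne 0$ in $H^{|\alpha|+2N}(\B(\Gamma/\langle g\rangle);\Q)$ for every $N$, a contradiction. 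Hence $\nu^* \circ \pi_! = 0$ for every infinite-order $g$.

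Finally, I would identify $H^*(\B^2\C(\Gamma);\Q) \cong \Sym^*_\Q(\C(\Gamma)^\vee)$ with polynomial functions on $\C(\Gamma)\otimes\Q$; each $\nu^*$ is then evaluation at $g$. Since the image of $\C(\Gamma)$ $\Q$-spans, and therefore is Zariski-dense in, $\C(\Gamma)\otimes\Q$, a polynomial vanishing on every $g\in\C(\Gamma)$ vanishes identically, and $\pi_!= 0$ rationally. For the integral statement, finite generation of the torsion-free group $\C(\Gamma)$ yields $\C(\Gamma)\cong \Z^r$ and $H^*(\B^2\C(\Gamma);\Z) = \Z[t_1,\dots,t_r]$, and the same Zariski-density argument over the lattice $\Z^r\subset\Q^r$ runs verbatim. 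The projection-formula step is the clean heart of the argument; the main technical obstacle I expect is verifying the homotopy pullback square and the compatibility of orientations needed for naturality of the Gysin map --- particularly in the integral setting, where the paper's Gysin construction requires an integral Poincar\'e duality fibre.
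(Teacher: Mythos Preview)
Your proof is correct and follows essentially the same route as the paper: the reduction to individual central elements via the homotopy pullback square and the polynomial/Zariski-density detection argument are exactly what the paper does, and your projection-formula step for $\pi'_!=0$ is the contrapositive of the paper's (which first extracts nilpotence of the Euler class $e=(\pi')^*(t)$ from Burghelea and then argues $0=\pi'_!((\pi')^*(e^N)\cup y)=e^N\cup\pi'_!(y)$, hence $\pi'_!(y)=0$ in the integral domain $\Q[t]$). The concern you flag about the integral Gysin map is legitimate but resolved in the intended application, where $\Gamma=\pi_1(M)$ for a closed aspherical manifold and is therefore an integral Poincar\'e duality group.
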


It seems to be an open problem whether the centre of the fundamental group of an aspherical manifold is finitely generated, though this is known for several classes of groups.

\begin{corollary*}
Let $M$ be a closed, connected, oriented, aspherical manifold with non-trivial centre that satisfies the {\pbc} with $\Q$-coefficients and the central part of Burghelea's conjecture. Then 
\[0 = \kappa_c(M) \in H^{k-d}(\BTOPtw_h(M);\Q)\] 
for all $c \in H^k(\BSTOP \times K(\mathbb{Z},d);\Q)$. If $C(\pi_1(M))$ is finitely generated then the same statement holds integrally.
\end{corollary*}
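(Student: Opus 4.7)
The plan is to deduce the corollary directly from the preceding theorem, once the universal oriented $M$-block bundle with trivial fibre transport has been identified with the classifying fibration of a central extension of $\Gamma := \pi_1(M)$. That identification is essentially already in place in the discussion preceding the theorem, and tautological classes factor through the Gysin map of this fibration, so the vanishing in the preceding theorem transports immediately to the tautological classes.

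Concretely, the {\pbc} supplies a weak equivalence $\TOPtw_0(M) \xrightarrow{\simeq} \G_0(M)$, and since $M$ is aspherical the computation of $\pi_*\G(M)$ recalled in the introduction gives $\G_0(M) \simeq K(\C(\Gamma),1)$; hence $\BTOPtw_0(M) \simeq \B^2\C(\Gamma)$, and under this equivalence the underlying fibration of the universal block bundle is the classifying fibration
\[\pi\colon \B(\Gamma/\C(\Gamma)) \lra \B^2\C(\Gamma)\]
of the central extension $1 \to \C(\Gamma) \to \Gamma \to \Gamma/\C(\Gamma) \to 1$. Because $M$ is a closed aspherical $d$-manifold, $\Gamma$ is a Poincar\'e duality group of dimension $d$, and the remaining hypotheses of the preceding theorem (non-trivial centre, central part of Burghelea) are assumed, so
\[\pi_!\colon H^*(\B(\Gamma/\C(\Gamma));\Q) \lra H^{*-d}(\B^2\C(\Gamma);\Q)\]
vanishes, and vanishes integrally when $\C(\Gamma)$ is finitely generated. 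Since the tautological classes on $\BTOPtw_0(M)$ are defined by $\kappa_c(M) = \pi_!\bigl((T_v^s(\pi), \e(\pi))^*(c)\bigr)$, the vanishing of $\pi_!$ immediately forces $\kappa_c(M) = 0$ for every $c \in H^k(\BSTOP \times K(\Z,d);\Q)$, with no restriction on the degree $k$.

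The only point requiring some care is the check that the Gysin homomorphism produced by the block bundle construction of the introduction matches, under the identification $\BTOPtw_0(M) \simeq \B^2\C(\Gamma)$, the fibration Gysin map used in the preceding theorem. I would expect this to follow from naturality together with the fact that the block-bundle Gysin construction depends only on the underlying oriented Poincar\'e duality fibration, but it is the one spot where one must be careful that no structure is lost in passing from block bundles to their underlying fibrations; given the machinery set up earlier in the paper, this should be an unpacking of definitions rather than a substantive step.
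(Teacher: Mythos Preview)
Your proposal is correct and follows essentially the same approach as the paper: the paper sets up exactly this deduction via \cref{blockfib}, which identifies the underlying fibration of the universal block bundle over $\BTOPtw_0(M)$ (under the \pbc) with $\pi\colon \B(\Gamma/\C(\Gamma)) \to \B^2\C(\Gamma)$, and the discussion in Section~\ref{xyz} then reduces the vanishing of tautological classes to the vanishing of this Gysin map. Your cautionary remark about matching the two Gysin maps is well taken and is addressed by the paper's construction of $\kappa_c$ in \cref{def kappa}: the Gysin map for a block bundle is \emph{defined} via the Serre spectral sequence of its underlying fibration (using \cref{prop:quasifib}), so once the \pbc\ identifies that fibration with $\pi$, the two Gysin maps coincide by definition rather than by any further comparison argument.
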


This result immediately implies the vanishing of all tautological classes of all smooth fibre bundles with fibres satisfying the hypotheses. Even if one is only interested in smooth fibre bundles, it seems essential to consider block bundles in order to prove it. This result concerns \emph{all} tautological classes, not just those of non-zero degree, which means that it has content even for the bundle $M \to *$. 
\begin{corollary*}
Let $M$ be as in the previous corollary. Then the Euler characteristic and all Pontryagin numbers of $M$ vanish.
\end{corollary*}

The vanishing of the Euler characteristic in the situation of the corollary was obtained by Gottlieb in \cite{Gottlieb} by more elementary means, without assuming either conjecture. We believe that the vanishing of Pontryagin numbers is new; it means that $M$ represents a torsion element in the topological oriented cobordism ring and an element of order at most $2$ in the smooth one when smooth itself. This should be contrasted with Ontaneda's result mentioned above. Let us also mention that the result is trivial if an element of the centre of $\pi_1(M)$ can be realised by a principal $S^1$-action (e.g. $M$ a nilmanifold), but that this need not happen in general \cite{CWY}.

The principal examples for which we verify the hypotheses of the two corollaries are manifolds built as iterated bundles with fibres either non-positively curved manifolds or biquotients of Lie groups (that is manifolds of the form $\Gamma \backslash G / K$, where $\Gamma$ is a cocompact lattice and $K$ is a maximal compact subgroup). During the proof of the above theorem we will unearth slightly weaker finiteness conditions than Burghelea's that still allow the proof of vanishing of the Gysin map to go through. Chief among the examples we can cover this way is $S^1 \times M$, whenever $\pi_1(M)$ is a Farrell--Jones group.
\newline

This discussion leads us to formulate the following
\begin{conj*}
Let $M$ be a closed, connected, oriented, aspherical manifold. If $\C(\pi_1(M)) \neq 0$ then, for any ring $R$,
\[0 = \kappa_c(M) \in H^*(\B\TOPtw_h(M);R)\]
for all $c \in H^*(\BSTOP\times K(\Z,d);R)$.
\end{conj*}
Since $H^*(\BSTOP; \mathbb Z/2) \rightarrow H^*(\BSO; \mathbb Z/2)$ is surjective, this conjecture in particular implies that all Stiefel--Whitney numbers vanish, and thus that a smooth aspherical manifold with non-trivial centre is nullbordant.

\subsection*{Organisation of the paper}

We begin \cref{sec2} by recalling basics about block bundles and then construct the universal stable vertical tangent bundle in the latter half, the fibrewise Euler class for a fibration with Poincar\'e fibre in \cref{sec:Euler}, and tautological classes for block bundles in \cref{sec3}. We also compare our definitions to previous ones.
In \cref{sec4} we review the homotopy type of the space of block homeomorphisms and its relation to the Farrell--Jones conjectures. Along the way we obtain the main theorem in the centreless and the abelian case. To discuss general aspherical manifolds whose centre is non-trivial, we introduce a plethora of finiteness conditions in \cref{sec5}, among them Burghelea's conjecture, and untangle their relations, in particular proving our main vanishing results. Finally, in \cref{sec6} we discuss several classes of manifolds which satisfy both conjectures and indeed prove the vanishing of tautological classes for a few cases not covered by the existing literature on the Burghelea conjecture via intermediate finiteness assumptions introduced in \cref{sec5}. We end 
some open questions which we encountered on the way.

\begin{ackn}
We are happy to thank Diarmuid Crowley, S\o{}ren Galatius, and Wolfgang Steimle for several helpful discussions along the way. Furthermore, we want to heartily thank Alexander Engel and Micha{\l} Marcinkowski for making us aware of Burghelea's conjecture in the first place, which replaces a stronger assumption in an earlier version of this paper.
We also want to thank the anonymous referee for pointing out an oversight in a previous version of this paper concerning the relation between the various finiteness properties in \cref{sec5}. Finally, we would like to express our gratitude towards the Mathematisches Forschungsinstitut Oberwolfach, where the seeds of the present project were sown over a lovely barbecue. 

FH and ML enjoyed support of the CRC 1085 `Higher invariants' at the University of Regensburg, FH and WL are members of the Hausdorff Centre for Mathematics, DFG GZ 2047/1, project ID 390685813 at the University of Bonn and WL and ML were supported by the ERC-grant 662400 `KL2MG-interactions'. ORW was supported by EPSRC grant EP/M027783/1 `Stable and unstable cohomology of moduli spaces'.  
\end{ackn}

\section{A stable vertical tangent bundle for block bundles}\label{sec2}

In this section we shall remind the reader of the definition of a block bundle with fibre a manifold $M$, describe the classifying space for such block bundles and the universal block bundle, and construct the stable vertical normal bundle on its total space. For our applications we require this theory for topological manifolds and topological block bundles, but it can be developed in any category $\CAT \in \{\DIFF, \PL, \TOP\}$ and we shall do so in this generality.

Many of the necessary ideas already appeared in work of Ebert and Randal-Williams \cite{ERW}, where models for the universal smooth block bundle were described, and it was shown that any smooth block bundle over a finite simplicial complex had a stable vertical tangent bundle. The argument given there was particular to vector bundles (gluing together explicit maps to Grassmannians defined on different blocks). Here we shall improve the result to hold for $\CAT$ block bundles and give a stable vertical $\CAT$ tangent bundle for the universal block bundle (whose base is \emph{not} a finite simplicial complex).

The credulous reader not interested in the rather technical construction of the universal vertical tangent bundle may skip the entire section, except maybe the reminder on block bundles in \cref{bla} if warranted, since the techniques employed are entirely different from those of the remainder of the article. In particular, they will not miss out on anything else relevant.

\subsection{Notation and conventions}

For convenience we use the following notion. A \emph{$p$-block space} is a space $X$ with a reference map $\pi \colon X \to \Delta^p$ to the $p$-simplex. A morphism between $p$-block spaces $(X, \pi)$ and $(X', \pi')$ is a continuous map $f \colon X \to X'$ which \emph{weakly} commutes with the reference map in the following sense: for each face $\tau \subset \Delta^p$, the map $f$ sends $\pi^{-1}(\tau)$ into $\pi'^{-1}(\tau)$. If $X$ and $X'$ are $\CAT$ manifolds and $f$ is a $\CAT$ isomorphism, we say it is a \emph{$p$-block $\CAT$ isomorphism}.

If $(X, \pi)$ is a $p$-block space then for each $i=0,1,2,\ldots,p$ we obtain a $(p-1)$-block space $d_i(X, \pi)$ by restriction to the $i$th face of $\Delta_p$. More precisely, if $\Delta^{p-1}_i \subset \Delta^p$ denotes the face spanned by all vertices but the $i$th, then $d_i(X, \pi) = (\pi^{-1}(\Delta^{p-1}_i), \pi\vert_{\pi^{-1}(\Delta^{p-1}_i)})$. We call this the \emph{restriction} of $X$ to the $i$th face of $\Delta^p$.

We shall always implicitly consider spaces of the form $\Delta^p \times T$ to be $p$-block spaces with reference map given by projection to the first factor.

\subsection{Block diffeomorphisms}

For $i=0,1,\ldots,p$
and $0 < \epsilon \leq 1$ let us write
\[\Delta^p_i(\epsilon) := \{(t_0, t_1, \ldots, t_p) \in \Delta^p \,\vert\, 0 \leq t_i < \epsilon\}.\]
For any $0 < \epsilon \leq 1$ define a homeomorphism
\begin{align*}
h_i(\epsilon) \colon \Delta^p_i(\epsilon)   &\lra \Delta^{p-1}_i \times [0,\epsilon) \\
(t_0, t_1, \ldots, t_p) &\longmapsto (\tfrac{t_0}{1-t_i}, \tfrac{t_1}{1-t_i}, \ldots, \tfrac{t_{i-1}}{1-t_i}, \tfrac{t_{i+1}}{1-t_i}, \ldots, \tfrac{t_p}{1-t_i}; t_i ).
\end{align*}
and a retraction $\pi_i(\epsilon) = \pi_1 \circ h_i(\epsilon) \colon \Delta^p_i(\epsilon) \to \Delta^{p-1}_i$.

\begin{Def}\label{def:pBlockCatIso}
A \emph{collared $p$-block $\CAT$ isomorphism of $\Delta^p \times M$} is a $\CAT$ isomorphism
\[f \colon \Delta^p \times M \lra \Delta^p \times M\]
which is also a $p$-block map, such that for each $i=0,1,\ldots, p$ there is an $\epsilon>0$ such that $f$ preserves the set $\Delta^p_i(\epsilon) \times M$ and $h_i(\epsilon) \circ f\vert_{\Delta^p_i(\epsilon) \times M} \circ h_i(\epsilon)^{-1} = d_i(f) \times \mathrm{Id}_{[0,\epsilon)}$.
\end{Def}
It is an elementary but tedious exercise to see that if $f$ is a collared $p$-block $\CAT$ isomorphism of $\Delta^p \times M$ then $d_i(f)$ is a collared $(p-1)$-block $\CAT$ isomorphism of $\Delta^{p-1} \times M$. Thus there is a semi-simplicial group $\widetilde{\CAT}(M)_\bullet$ with $p$-simplices the set of collared $p$-block $\CAT$ isomorphisms of $\Delta^p \times M$, and face maps given by restriction. 
The classifying space $\B \widetilde{\CAT}(M)$ is defined to be the geometric realisation of the bi-semi-simplicial set $N_\bullet \widetilde{\CAT}(M)_\bullet$ obtained by taking the levelwise nerve of the semi-simplicial group $\widetilde{\CAT}(M)_\bullet$. 

\begin{Rmk}\label{Remark Kan}
It is claimed in \cite[Appendix A, Section 3]{BLR3} that $\widetilde{\CAT}(M)_\bullet$ can be enhanced to a simplicial group (which would, in particular, imply the Kan property). The construction of degeneracy maps given, however, is not compatible with the collaring conditions. In case $\CAT \in \{\PL,\TOP\}$ one can simply drop the collaring condition to fix this issue, as done in \cite{ERW} (this change clearly does not affect the homotopy type of $\widetilde{\CAT}(M)_\bullet$). However, the proposed degeneracy maps also fail to be smooth. In fact, contrary to a claim in the proof of \cite[Proposition 2.8]{ERW}, $\widetilde{\DIFF}(M)_\bullet$ fails to be Kan without the collaring condition: Not even horns in the $2$-simplex need to be fillable, unless some compatibility on derivatives is enforced at the intersection of the given faces. We thank Manuel Krannich for making us aware of these oversights. 

We now argue that $\widetilde{\CAT}(M)_\bullet$ is a Kan semi-simplicial set. A map $\Lambda^n_i \to \widetilde{\CAT}(M)_\bullet$ corresponds to a $\CAT$-isomorphism $\phi \colon \Lambda_i^n \times M \to \Lambda_i^n \times M$ with the collaring condition. We extend $\phi$ to a collared $\CAT$-isomorphism
\[\phi_\epsilon \colon \Lambda_i^n(\epsilon) \times M \longrightarrow  \Lambda_i^n(\epsilon) \times M\]
where $\Lambda_i^n(\epsilon)  = \bigcup_{i\neq j} \Delta_j^n(\epsilon)$ with $\epsilon$ the minimum of the collars respected by the restrictions of $\phi$ to the faces of $\Lambda_i^n$. Then one can pick a suitable embedding $g \colon \Delta^n \rightarrow \Lambda_i^n(\epsilon)$ and conjugate $\phi_\epsilon$ with $g \times \mathrm{id}_M$ to obtain an extension of $\phi$ as desired.

Sufficient conditions for $g$ being suitable are, for example, given as follows: Let $p \colon \Delta^n \rightarrow \Delta^n_i$ denote the linear map which sends the $i$th vertex to the barycentre of $\Delta^n_i$ and all other vertices to themselves. Furthermore, for a face $T$ of a simplex $S$ denote by $\Lambda_T^S$ the union of all faces of $S$ not containing $T$, and let $\Lambda_T^S(\epsilon)$ denote an $\epsilon$ neighbourhood of $\Lambda_T^S$ just as above. Now for $T$ a simplex of $\Lambda_i^n$ put 
\[B_T = \bigcup_{S \supseteq T} p\big(S \setminus \Lambda_T^S(\epsilon)\big) \subseteq \Delta_i^n,\]
where $S$ runs over the $(n-1)$-dimensional faces of $\Lambda_i^n$ containing $T$. Then there should exist a $\delta>0$ such that
\begin{enumerate}
\item $g$ restricts to the identity on $\Lambda_i^n(\delta)$,
\item for every top-dimensional face $T$ of some $\Delta_j^n$ with $\epsilon$ thickening $T(\epsilon) \subseteq \Delta_j^n$, we have
\[gh_i^{-1}(pT(\epsilon) \times [0,\delta)) \subseteq \bigcap_{T \subset \Delta_k^n} \Delta_k^n(\epsilon),\]
where $h_i$ is the homeomorphism described in the paragraph before \cref{def:pBlockCatIso},
\item for each (not necessarily top-dimensional) face $T$ of $\Lambda_i^n$ the composite
\[g_j^T \colon (p(\Delta_j^n) \cap B_T) \times [0,\delta) \xrightarrow{h_i^{-1}} \Delta_i^n(\delta) \xrightarrow{g} \Delta_j^n(\epsilon)\]
has the property that 
\[\mathrm{pr}_1 h_j g^T_j(x+Dp(v),t) = \mathrm{pr}_1 h_jg^T_j(x,t)+v \in \Delta_j^n\]
for every tangent vector $v$ of $T$ such that $x+Dp(v) \in p(\Delta_j^n) \cap B_T$, and 
\item finally
\[\mathrm{pr}_1 h_T g^T_j(x,t) = \mathrm{pr}_1 h_T g^T_j(x,0),\]
where \[h_T \colon \bigcap_{T \subset \Delta_k^n} \Delta_k^n(\epsilon) \longrightarrow T \times [0,\epsilon)^{n-\mathrm{dim}(T)}\]
is given by iterating the $h_l$.
\end{enumerate}
Such $g$ is readily constructed for $n=1,2$, in a way isotopic to the identity. To obtain it for higher $n$ note that $g$ satisfying conditions (3) and (4) can be chosen linear on the various $h_i^{-1}(p(\Delta_j^n \setminus \partial\Delta_j^n(\epsilon)) \times [0,\delta))$, where $0 \leq j \leq n$, $i \neq j$. To extend such a $g$ to the remainder of $\Delta^n$, note that in $\epsilon$-thickenings of $h_i^{-1}(p(T) \times [0,\delta))$ for lower, but positive dimensional, faces $T$ of $\Lambda_i^n$ the extension problem reduces to the construction of $g$ for some lower value of $n$, again by (3). Finally, the arising embedding can be extended to all of $h_i^{-1}(\Delta_i^n \times [0,\delta))$, and then all of $\Delta^n$ by iterated application of the isotopy extension theorem, as no further invariance conditions need to be met.
\end{Rmk}

\subsection{Block bundles and their moduli spaces}\label{bla}

Let $K$ be a simplicial complex, and $\pi \colon E \to \vert K \vert$ be a continuous map. We recall 
the notion of a $\CAT$ block bundle structure on this map, with fibre a $\CAT$ manifold $M$. A \emph{block chart} for $E$ over a simplex $\sigma \subset \vert K \vert$ is a homeomorphism
\[h_\sigma \colon \pi^{-1}(\sigma) \lra \sigma \times M\]
such that for each face $\tau \leq \sigma$ the map $h_\sigma\vert_{\pi^{-1}(\tau)}$ sends $\pi^{-1}(\tau)$ homeomorphically to $\tau \times M$. A \emph{block atlas} $\mathcal{A}$ for $E$ is a set of block charts for $E$, at least one for each simplex of $\vert K \vert$, so that if $h_{\sigma_i} \colon \pi^{-1}(\sigma_i) \to \sigma_i \times M$, $i=0,1$, are two block charts then the composition
\[h_{\sigma_1} \circ h_{\sigma_0}^{-1} \colon (\sigma_0 \cap \sigma_1) \times M \lra (\sigma_0 \cap \sigma_1) \times M\]
is a $p$-block $\CAT$ isomorphism in the sense of \cref{def:pBlockCatIso}. A \emph{block bundle structure} on $\pi \colon E \to \vert K \vert$ is a maximal block atlas. 

It can be shown directly that concordance classes of block bundles over $\vert K \vert$ are classified by homotopy classes of maps $f\colon \vert K \vert \to \B\widetilde{\CAT}(M)$, but for both the proof and geometric constructions, the following model for the classifying space is more convenient. It depends on $\CAT \in \{\DIFF, \TOP, \PL\}$, but we omit this from the notation.

\begin{Def}\label{def:MM}
Let $\mathcal{M}(M)^{\epsilon, n}_p$ denote the set of locally flat $\CAT$ submanifolds $W \subset \Delta^p \times \bR^n$ (considered as $p$-block spaces via projection to the $\Delta^p$ factor) such that for each $i=0,1,\ldots, p$ we have
\begin{enumerate}[(i)]
\item $W$ is $\CAT$ transverse to $\Delta^{p-1}_i \times \bR^n \subset \Delta^{p} \times \bR^n$, 

\item\label{it:Collar} $W \cap (\Delta^p_i(\epsilon) \times \bR^n) = (\pi_i(\epsilon) \times \mathrm{Id}_{\bR^n})^{-1}(W\cap(\Delta_i^{p-1} \times \bR^n))$, and
\item there is a $p$-block $\CAT$ isomorphism $f \colon \Delta^p \times M \to W \subset \Delta^p \times \bR^n$ which is collared in the sense that for each $i=0,1,\ldots,p$ the map $f$ agrees with the map
\begin{align*}
(\Delta^{p-1}_i * \{e_i\}) \times M &\lra (\Delta^{p-1}_i * \{e_i\}) \times \bR^n\\
((1-t_i)\cdot w + t_i \cdot e_i , x) &\longmapsto ((1-t_i)\cdot w' + t_i \cdot e_i, x')
\end{align*}
on $\Delta^{p}_i(\epsilon) \times M$, where $(w',x') = f\vert_{\Delta^{p-1}_i \times M}(w,x)$ and $e_i \in \mathbb R^n$ denotes the $i$-th unit vector.
\end{enumerate}
Define face maps $d_i \colon \mathcal{M}(M)^{\epsilon,n}_p \to \mathcal{M}(M)^{\epsilon,n}_{p-1}$ by restricting $W$ to the $i$th face of $\Delta^p$, to give a semi-simplicial set $\mathcal{M}(M)^{n}_\bullet$. Put $\mathcal M(M)^n_\bullet = \bigcup_{\epsilon >0} \mathcal M(M)^{\epsilon,n}_\bullet$ and finally let $\mathcal{M}(M)_\bullet = \colim\limits_{n \to \infty} \mathcal{M}(M)^{n}_\bullet$, under the evident comparison maps, and $\mathcal{M}(M) = \vert \mathcal{M}(M)_\bullet\vert$.
\end{Def}

The semi-simplicial set $\mathcal{M}(M)_\bullet$ is Kan: given a  $E \subset \Lambda^p_i \times \bR^n$ defining a block bundle over a horn $\Lambda^p_i$ to be extended to $\Delta^p$, condition (\ref{it:Collar}) above gives an extension to an open neighbourhood of $\Lambda^p_i$, and a full extension may be obtained from this by choosing an isotopy from the identity map of $\Delta^p$ to a suitable embedding into this open neighbourhood, as in \cref{Remark Kan}.

To compare $\mathcal{M}(M)$ with $\B\widetilde{\CAT}(M)$, we follow \cite[Proposition 2.3]{ERW} and consider the bi-semi-simplicial set $X_{\bullet,\bullet}$ with $(p,q)$-simplices given by a $W \in \mathcal{M}(M)_q$ and a sequence
\[W \overset{f_0}\longleftarrow \Delta^q \times M  \overset{f_1}\longleftarrow \Delta^q \times M  \overset{f_2}\longleftarrow \cdots  \overset{f_p}\longleftarrow \Delta^q \times M\]
of $q$-block $\CAT$ isomorphisms, where $f_1, \ldots, f_p$ are collared as in \cref{def:pBlockCatIso}, and $f_0$  is collared as in \cref{def:MM}.
The face maps in the $q$ direction are by restriction to faces, and those on the $p$ direction are by composing the $f_i$ or forgetting $f_p$. The augmentation map $X_{\bullet,q} \to \mathcal{M}(M)_q$, which just records $W$, has fibre over $W$ isomorphic to $E_\bullet G$, where $G$ is the group of the collared $q$-block $\CAT$ isomorphisms of $\Delta^q \times M$; thus $|X_{\bullet,q}| \overset{\simeq}\to \mathcal{M}(M)_q$. There is a map $X_{p, \bullet} \to N_p \widetilde{\CAT}(M)_\bullet$,  which just records $(f_1, \ldots, f_p)$. This is a Kan fibration of semi-simplicial sets, and as in the proof of \cite[Proposition 2.3]{ERW} its fibre after geometric realisation can be described as the space of block embeddings of $M$ into $\bR^\infty$, which is contractible. In total this yields a preferred homotopy equivalence $\mathcal{M}(M) \simeq \B\widetilde{\CAT}(M)$.

Let us now describe the universal $M$-block bundle $\pi \colon \mathcal E(M) \to \mathcal M(M)$. Strictly speaking this will not be a block bundle as described in the beginning of this section, since $\mathcal {M}(M)$ is not a finite simplicial complex. We will, however, blur this distinction in the notation, as the pull back of $\pi$ along a simplicial map from a finite simplicial complex is indeed a block bundle as in the proof of \cite[Proposition 2.7]{ERW}.  

Let $\mathcal{E}(M)_p \subset \mathcal{M}(M)_p \times \Delta^p \times \bR^\infty$ be the subspace of those triples $(W; t_0, \ldots, t_p ; x)$ for which $(t_0, \ldots, t_p ; x) \in W$, and let $\pi_p \colon \mathcal{E}(M)_p \to \mathcal{M}(M)_p \times \Delta^p$ denote the projection map. These assemble to a continuous map
\[\pi \colon \mathcal{E}(M) \lra \mathcal{M}(M)\]
where 
\[\mathcal E(M) = \left(\bigsqcup_{p \geq 0} \mathcal{E}(M)_p \right)/\sim\]
with $\sim$ the equivalence related generated by
\[(W; t_0, \ldots, t_{i-1}, 0, t_{i+1}, \ldots t_p ; x) \sim (d_i(W); t_0, \ldots, t_{i-1}, t_{i+1}, \ldots t_p ; x)\]
and
\[\mathcal M(M) = \left(\bigsqcup_{p \geq 0} \Delta^p \times \mathcal{M}(M)_p \right)/\sim\]
the usual geometric realisation. The preimage of the simplex $\{W\} \times \Delta^p \subset  \mathcal{M}(M)$ is $\{W\} \times W$, which is $p$-block $\CAT$ isomorphic to $\Delta^p \times M$.

We will now show that the map $\pi \colon \mathcal E(M) \to \mathcal M(M)$ is a weak quasi-fibration, in the sense that the comparison map $\pi^{-1}(v) \to \mathrm{hofib}_v(\pi)$ is a weak homotopy equivalence for any vertex $v \in \mathcal{M}(M)_0$, thereby directly identifying the underlying fibration of the universal block bundle. For future use, we formulate this in a slightly more general manner.

\begin{Prop}\label{prop:quasifib}
If $X_\bullet$ is a semi-simplicial set and $f \colon X_\bullet \to \mathcal{M}(M)_\bullet$ a semi-simplicial map, then the map $f^*\pi \colon f^*\mathcal E(M) \to |X_\bullet|$ is a weak quasi-fibration.
\end{Prop}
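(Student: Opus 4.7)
The plan is to establish the stronger statement that $f^*\pi$ is a quasi-fibration in the usual sense---over every point of $|X_\bullet|$, not just vertices---by a skeletal induction on $X_\bullet$ using Dold's gluing theorem for quasi-fibrations. First I would reduce to the case where $X_\bullet$ is finite, so that the induction terminates; both fibres and homotopy fibres commute with the relevant filtered colimits along the closed cofibrations given by inclusions of sub-semi-simplicial sets, which suffices to reduce the general statement to this case. The base of the induction is the $0$-skeleton, which is handled immediately: by condition (iii) of \cref{def:MM}, each vertex $v \in X_0$ has fibre $(f^*\pi)^{-1}(v) \cong M$, so $f^*\pi$ restricted to the discrete space $|X^{(0)}_\bullet|$ is a trivial $M$-bundle.

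For the inductive step, assuming the result holds over $|X^{(p-1)}_\bullet|$, I would cover $|X^{(p)}_\bullet|$ by two open sets: $V$, the disjoint union of the open interiors of the $p$-simplices; and $U$, an open $\epsilon$-collar neighborhood of $|X^{(p-1)}_\bullet|$, defined inside each $p$-simplex as $\bigcup_{i=0}^p \Delta^p_i(\epsilon)$ for a sufficiently small $\epsilon>0$. By condition (iii) of \cref{def:MM} the bundle is trivial over each component of $V$ and of $U \cap V$, hence a quasi-fibration over both. By the collaring condition (ii) of the same definition, the total space $(f^*\pi)^{-1}(U)$ is literally the pullback of $(f^*\pi)^{-1}(|X^{(p-1)}_\bullet|)$ along a retraction $U \to |X^{(p-1)}_\bullet|$ assembled from the maps $\pi_i(\epsilon)$. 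This yields a fibrewise strong deformation retraction of $(f^*\pi)^{-1}(U)$ onto $(f^*\pi)^{-1}(|X^{(p-1)}_\bullet|)$; the latter is a quasi-fibration by the inductive hypothesis, and quasi-fibrations are preserved under fibrewise deformation retracts, so $(f^*\pi)|_U$ is also a quasi-fibration. Dold's local-to-global theorem for quasi-fibrations, applied to the cover $\{U, V, U \cap V\}$ (which is closed under intersections), then implies that $f^*\pi$ is a quasi-fibration over $|X^{(p)}_\bullet|$, completing the induction.

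The main obstacle will be verifying that the face-wise collar retractions $\pi_i(\epsilon)$ for the various faces of a given $p$-simplex really do assemble into a globally defined retraction $U \to |X^{(p-1)}_\bullet|$; concretely, on a higher-codimension collar intersection $\Delta^p_i(\epsilon) \cap \Delta^p_j(\epsilon)$ one must show that the two possible iterated retractions agree, so that the composite is unambiguous and compatible with the bundle structure. This commutativity follows from the explicit barycentric formulas defining $\pi_i(\epsilon)$ together with the iterated collaring encoded in \cref{def:MM}(ii)--(iii) via the semi-simplicial face relations, but the bookkeeping is somewhat fiddly and typically requires shrinking $\epsilon$ adaptively across the (now finite) semi-simplicial set $X_\bullet$---which is precisely where the preliminary reduction to finite $X_\bullet$ pays off.
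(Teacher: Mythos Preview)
Your overall strategy—reduce to finite $X_\bullet$, then induct up the skeleta—is sound and parallels the paper's reduction. The gap is in the inductive step, specifically the sentence ``By condition~(iii) of \cref{def:MM} the bundle is trivial over each component of $V$ and of $U\cap V$.'' Condition~(iii) supplies only a \emph{block} isomorphism $\Delta^p\times M\to W$, and a block isomorphism is merely required to carry faces to faces; it need not commute with the projections to $\Delta^p$ over interior points. Consequently the restriction of $W\to\Delta^p$ to $\mathrm{int}(\Delta^p)$ is not a trivial bundle, and there is no reason for the point-preimage $W_x$ over an interior point $x$ to be homotopy equivalent to $M$—indeed, failure of this is exactly what distinguishes block bundles from fibre bundles. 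So your appeal to Dold's gluing theorem breaks down: you cannot verify the quasi-fibration hypothesis over the open pieces $V$ and $U\cap V$.

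The paper sidesteps this by proving only the \emph{weak} quasi-fibration property (comparison at vertices) and replacing Dold's open-cover criterion with Mather's First Cube Theorem applied to the homotopy pushout that attaches a single top simplex. Over one simplex the weak statement is immediate: $\Delta^p$ is contractible, the block isomorphism gives $W\simeq\Delta^p\times M\simeq M$, and the vertex fibre $W_v\cong M$ includes as $\{v\}\times M\hookrightarrow\Delta^p\times M$. The cube theorem then propagates homotopy cartesianness from the already-established faces to the new one, with no need to understand fibres over interior points. If you want to rescue a Dold--Thom style argument you would have to first prove that $W\to\Delta^p$ is a genuine quasi-fibration over the whole simplex, which is not obvious and is essentially as hard as the original statement.
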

\begin{proof}
Let us first suppose that $X_\bullet$ is a finite semi-simplicial set. We proceed by double induction on the dimension of $X_\bullet$ and the number of top-dimensional simplices. Firstly, if $|X_\bullet|$ is 0-dimensional then the claim clearly holds. Otherwise, let $\sigma \in X_p$ be a top-dimensional simplex and $X'_\bullet$ be the semi-simplicial set obtained by removing $\sigma$, and write $f' = f\vert_{X'_\bullet}$. Then $f(\sigma) \in \mathcal{M}(M)_p$ is a submanifold of $\Delta^p \times \bR^\infty$ which is $p$-block isomorphic to $\Delta^p \times M$. Let us write $\partial f(\sigma) = f(\sigma) \cap (\partial \Delta^p \times \bR^\infty)$. There is a cube
\begin{equation*}
\xymatrix@C=.3cm@R=.3cm{
 & \partial f(\sigma)  \ar'[d]^-b[dd]\ar[ld] \ar[rr] && f(\sigma) \ar[ld] \ar[dd]^-c\\
 (f')^*\mathcal E(M)  \ar[rr] \ar[dd]^-a & & f^*\mathcal E(M) \ar[dd] \\
 & \partial \Delta^p  \ar[ld] \ar'[r][rr] && \Delta^p \ar[ld]\\
 |X'_\bullet| \ar[rr] & & |X_\bullet|
}
\end{equation*}
in which the top and bottom faces are homotopy push-outs. As $f(\sigma)$ is $p$-block isomorphic to $M \times \Delta^p$, the map $c$ is a weak quasi-fibration; as $X'_\bullet$ has fewer top-dimensional simplices than $X_\bullet$ we may suppose by induction that $a$ is a weak quasi-fibration; as $\partial \Delta^p$ is of lower dimension than $X_\bullet$ we may suppose by induction that $b$ is a weak quasi-fibration. The left and back faces are cartesian, so as $a$, $b$, and $c$ are weak quasi-fibrations it follows that they are homotopy cartesian. By Mather's First Cube Theorem \cite[Theorem 18]{Mather} it follows that the front and right faces are also homotopy cartesian: as $c$ (or $a$) is a weak quasi-fibration, it follows that $f^*\pi$ is too.

Now, if $X_\bullet$ is an arbitrary semi-simplicial set, let $v \in X_0$ and let $\mathcal{F}$ denote the directed set of finite sub-semi-simplicial sets $F_\bullet \subset X_\bullet$ which contain $v$. If we let $f^*\pi\vert_{|F_\bullet|} \colon f^*\mathcal E(M)\vert_{|F_\bullet|} \to |F_\bullet|$ denote the pullback of $f^*\pi$ along the inclusion $|F_\bullet| \to |X_\bullet|$, 
then as each compact subset of $|X_\bullet|$ lies in the geometric realisation of a finite sub-semi-simplicial set, the map
\[\hocolim_{F_\bullet \in \mathcal{F}} \, \mathrm{hofib}_v(f^*\pi\vert_{|F_\bullet|}) \lra \mathrm{hofib}_v(f^*\pi)\]
is a weak homotopy equivalence. As each $f^*\pi\vert_{|F_\bullet|}$ is a weak quasi-fibration the left-hand side may be replaced with the homotopy colimit of the constant diagram $(f^*\pi)^{-1}(v)$, which shows that $(f^*\pi)^{-1}(v) \to \mathrm{hofib}_v(f^*\pi)$ is a weak homotopy equivalence.
\end{proof}

\subsection{The stable vertical normal bundle}\label{sec:StabNormBundle}

Our goal is to construct a stable $\CAT$ bundle on the total space $\mathcal{E}(M)$ of the universal block bundle $\pi \colon \mathcal{E}(M) \to \mathcal{M}(M)$. We shall focus on the unoriented case for simplicity, but there are no significant changes necessary to treat the oriented case. Our construction will be quite natural once we pull back the universal block bundle to a slightly different, but homotopy equivalent, base. In comparison to the previous section, we shall construct a model for $\mathcal{M}(M)$ which also encodes choices of $\CAT$ normal bundles. This will allow us to essentially follow the argument \cite[Proposition 3.2]{ERW} using this model of the universal block bundle.

\begin{Def}\label{def:MMprime}
If $W \in \mathcal{M}(M)^{n, \epsilon}_p$, an \emph{$\epsilon$-prepared normal $\CAT$ bundle} for $W$ consists of an open neighbourhood $W \subset U \subset \Delta^p \times \bR^n$, a retraction $r \colon U \to W$, and a $\CAT$ $\bR^{n-d}$-bundle atlas $\mathcal{A}$ for $r$. In addition we require that $r$ is a morphism of $p$-block spaces, and that for each $i=0,1,\ldots,p$
\begin{enumerate}[(i)]

\item  $U \cap (\Delta^p_i(\epsilon) \times \bR^n) = (\pi_i(\epsilon) \times \mathrm{Id}_{\bR^n})^{-1}(U\cap(\Delta_i^{p-1} \times \bR^n))$,

\item\label{it:ProdStr} the map $r$ restricted to $U \cap (\Delta^p_i(\epsilon) \times \bR^n)$ commutes with the $i$th barycentric coordinate $t_i$ (which makes the left hand vertical map in the following diagram well defined), and 
\begin{equation*}
\xymatrix{
U \cap (\Delta^p_i(\epsilon) \times \bR^n) \ar[d]_-{r\vert_{U \cap (\Delta^p_i(\epsilon) \times \bR^n)}} \ar[rr]^-{\pi_i(\epsilon) \times \mathrm{Id}_{\bR^n}}& & U \cap (\Delta^{p-1}_i \times \bR^n) \ar[d]^-{r\vert_{U \cap (\Delta^{p-1}_i \times \bR^n)}} \\
W \cap (\Delta^p_i(\epsilon) \times \bR^n) \ar[rr]^-{\pi_i(\epsilon) \times \mathrm{Id}_{\bR^n}}& & W \cap (\Delta^{p-1}_i \times \bR^n)
}
\end{equation*}
is a pullback of $\CAT$ $\bR^{n-d}$-bundles (with the $\CAT$ bundle structure on both sides given by restriction of $\mathcal{A}$).
\end{enumerate}
\end{Def}

\begin{Def}
Let $\mathcal{M}'(M)_\bullet^{\epsilon, n}$ denote the semi-simplicial set with $p$-simplices given by tuples $(W, U, r, \mathcal{A})$ of a $W \in \mathcal{M}(M)_p^{\epsilon, n}$ and an $\epsilon$-prepared normal bundle $(U, r, \mathcal{A})$. The $i$th face map is given by restricting all three pieces of data to $\Delta^{p-1}_i \times \bR^n$.
Again, let $\mathcal M'(M)^n_\bullet = \bigcup_{\epsilon > 0} \mathcal{M}'(M)_\bullet^{\epsilon, n}$. There are maps $\mathcal{M}'(M)_\bullet^{n} \to \mathcal{M}'(M)_\bullet^{n+1}$ given by sending $(W, U, r)$ to $(W, U \times \bR, r \circ \mathrm{proj}_U)$ and we let $\mathcal{M}'(M)_\bullet = \colim\limits_{n \to \infty} \mathcal{M}'(M)_\bullet^{n}$, and $\mathcal{M}'(M) = \vert \mathcal{M}'(M)_\bullet\vert$.
\end{Def}

\begin{Lemma}\label{lem:prep}
The semi-simplicial map $\mathcal{M}'(M)_\bullet \to \mathcal{M}(M)_\bullet$, given by forgetting the bundle data, is a weak homotopy equivalence on geometric realisation.
\end{Lemma}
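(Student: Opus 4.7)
The plan is to show that the forgetful semi-simplicial map $\varphi\colon \mathcal M'(M)_\bullet \to \mathcal M(M)_\bullet$ is a Kan fibration with weakly contractible fibres, so that taking geometric realisation gives a quasi-fibration whose fibres are contractible and hence a weak equivalence. Since the whole semi-simplicial set is an $n$-colimit and $n$ is implicit in $\mathcal M'(M)_\bullet$, the key flexibility throughout is that we are allowed to stabilise the ambient dimension freely.

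First I would check the Kan condition for $\varphi$: given $W \in \mathcal M(M)^{\epsilon,n}_p$ and an $\epsilon$-prepared normal $\CAT$ bundle on the restriction of $W$ to a horn $\Lambda^p_j \subset \Delta^p$, one must construct an extension over all of $\Delta^p$, possibly after enlarging $n$ and shrinking $\epsilon$. The collaring condition (\ref{it:Collar}) in \cref{def:MM} together with condition (\ref{it:ProdStr}) in \cref{def:MMprime} mean that the given $(U,r,\mathcal A)$ already canonically extends to an open neighbourhood of the horn inside $\Delta^p \times \bR^n$, using the product structure $\pi_j(\epsilon) \times \bR^n$. It therefore suffices to extend a normal $\CAT$ bundle from a closed submanifold (the interior faces of $W$, with their collars) to $W$ itself. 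This is precisely a relative existence statement for normal $\CAT$ bundles of locally flat submanifolds: available in $\DIFF$ by the tubular neighbourhood theorem, in $\PL$ by Rourke--Sanderson's block bundle theory, and in $\TOP$ by the Kirby--Siebenmann normal bundle theorem, each after sufficient stabilisation. An isotopy extension and a straightforward rescaling in $\epsilon$ then put the new bundle in prepared form.

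Next, I would establish that each fibre $\varphi^{-1}(W)$ (realised as a semi-simplicial subset, for a fixed vertex $W \in \mathcal M(M)_0$) is weakly contractible. Concretely, this is the semi-simplicial space of stable $\CAT$ normal bundle structures on a locally flat embedding $M = W \hookrightarrow \bR^\infty$, with appropriate product structure at infinity. The relative existence theorem just quoted, applied to pairs $(\Delta^p, \partial \Delta^p)$, produces fillers for all spheres, giving weak contractibility. Equivalently, one can invoke the classical statement that stable $\CAT$ normal bundles of locally flat $\CAT$ submanifolds of $\bR^N$ exist and are unique up to contractible choice, which is exactly the content of the fibre.

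The main obstacle I expect is the technical bookkeeping around the collaring and $\epsilon$-preparedness conditions: one must carry out the extension/uniqueness arguments while keeping simultaneously track of the product structure of the bundle near each face, the value of $\epsilon$ (which may need to shrink inductively over the faces), and the ambient dimension $n$ (which must grow). A clean way to handle this, following the outline of \cite[Proposition~3.2]{ERW}, is to work one face at a time via an inductive argument on the dimension of the horn and its missing face, using $\CAT$ isotopy extension to straighten normal bundle structures after the abstract existence step. Once these bookkeeping issues are settled, Kan fibration plus contractible fibres yields the claimed equivalence.
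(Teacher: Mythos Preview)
Your underlying geometric input---the relative stable existence and uniqueness theorems for $\CAT$ normal (micro)bundles and their representation by Euclidean bundles---is exactly what the paper uses, and your description of how the collaring conditions let one propagate the given data into an open neighbourhood of the horn is correct. So the content is right.

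The packaging, however, has a genuine issue. You phrase the argument as ``Kan fibration with weakly contractible fibres'', but in the \emph{semi}-simplicial world there is no good notion of the fibre of $\varphi$ over a vertex $W \in \mathcal{M}(M)_0$: without degeneracies there is no constant $p$-simplex at $W$, so $\varphi^{-1}(W)$ is not a sub-semi-simplicial set, and the literal point-preimage $|\varphi|^{-1}(W)$ is just the discrete set of $0$-simplices over $W$. Your description of the fibre as ``the semi-simplicial space of stable $\CAT$ normal bundle structures on $W \hookrightarrow \bR^\infty$'' is a perfectly sensible (and contractible) object, but it is not the fibre of $\varphi$ in any standard sense; making this precise would require either freely adjoining degeneracies or constructing an explicit model for the homotopy fibre.

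The paper sidesteps this by arguing directly that $\varphi$ has vanishing relative homotopy groups: since both semi-simplicial sets are Kan, a relative class is represented by a $W \in \mathcal{M}(M)_p$ together with a prepared normal bundle on $W|_{\partial \Delta^p}$, and one shows (after increasing $n$) that this extends over all of $W$. This is precisely the lifting property against $\partial\Delta^p \hookrightarrow \Delta^p$, i.e.\ the acyclic-fibration condition. Note that you in fact already invoke this $(\Delta^p,\partial\Delta^p)$ extension in your second step, so your horn-filling step is redundant: once you can fill relative to the whole boundary sphere, you certainly can fill relative to a horn, and more to the point the boundary-filling alone already yields the weak equivalence. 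The cleanest fix is simply to drop the two-step ``fibration $+$ fibre'' framing and argue directly for the $(\partial\Delta^p,\Delta^p)$ extension, exactly as in your second paragraph.
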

\begin{proof} 
We shall show that the map has vanishing relative homotopy groups. Our main tool is the relative stable existence and uniqueness theorem for normal $\CAT$ microbundles, and the $\CAT$ microbundle representation theorem. We have explained that $\mathcal{M}(M)_\bullet$ is Kan, and the same argument shows that $\mathcal{M}'(M)_\bullet$ is too, so a relative homotopy class may be described by a submanifold $W \subset \Delta^p \times \bR^n$ such that $W\vert_{\partial \Delta^p}$ comes with a prepared normal $\CAT$ bundle given by $W\vert_{\partial \Delta^p} \subset U_\partial \subset \partial \Delta^p \times \bR^n$, $r_\partial \colon U_\partial \to W\vert_{\partial \Delta^p}$, and $\mathcal{A}_\partial$. In order to show that this relative homotopy class is trivial, it will be sufficient to show that (after perhaps increasing $n$) the prepared normal bundle $(U_\partial, r_\partial, \mathcal{A}_\partial)$ for $W\vert_{\partial \Delta^p}$ is the restriction of a prepared normal bundle for $W$.

For a $\delta>0$ let us write $\Delta^p(\delta) = \cup_{i=0}^p \Delta^p_i(\delta) \subset \Delta^p$. Choose $\epsilon>0$ so that the given data lie in $\mathcal M'(M)^{\epsilon, n} $ or $\mathcal M(M)^{\epsilon, n}$. The product structures given by \cref{def:MM} (\ref{it:Collar}) and \cref{def:MMprime} (\ref{it:ProdStr}) give an extension of $(U_\partial, r_\partial, \mathcal{A}_\partial)$ to a normal $\CAT$ bundle of $W\vert_{\partial W(\epsilon/2)}$, where $\partial W(\epsilon/2) = W \cap (\Delta^p(\epsilon/2) \times \bR^n)$. Furthermore, the submanifold $W\vert_{\Delta^p \setminus \Delta^p(\epsilon)} \subset (\Delta^p \setminus \Delta^p(\epsilon)) \times \bR^n$ has a normal $\CAT$ microbundle (after perhaps increasing $n$) \cite[p.\ 204]{KS}, and this may be represented by a $\CAT$ $\bR^{n-d}$-bundle (by Kister--Mazur \cite{Kister} for $\TOP$, Kuiper--Lashof \cite{KuiperLashof} for $\PL$, and the tubular neighbourhood theorem for $\DIFF$). These yield $\CAT$ normal $\bR^{n-d}$-bundles over the boundary of 
\[W\vert_{\Delta^p(\epsilon) \setminus \Delta^p(\epsilon/2)} \cong W\vert_{\partial \Delta^p} \times [\epsilon/2, \epsilon] \subset \partial \Delta^p \times \bR^n \times [\epsilon/2, \epsilon].\]
By stable uniqueness of $\CAT$ normal microbundles, and of representing $\CAT$ $\bR^{n-d}$-bundles, there is an extension of the $\CAT$ normal $\bR^{n-d}$-bundles over the boundary to the whole of $W\vert_{\Delta^p(\epsilon) \setminus \Delta^p(\epsilon/2)}$. Gluing these three $\CAT$ normal $\bR^{n-d}$-bundles together shows that $(U_\partial, r_\partial, \mathcal{A}_\partial)$ is the restriction of a prepared normal bundle for $W$.
\end{proof}

Let us write $\mathcal{E}'(M)_p^{n} \subset \mathcal{M}'(M)_p^{n} \times \Delta^p \times \bR^n$ for the subspace of those $(W, U, r, \mathcal{A} ; t_0, \ldots, t_p ; x)$ such that $(t_0, \ldots, t_p ; x) \in W$, and $U_p^{n} \subset\mathcal{M}'(M)_p^{n} \times \Delta^p \times \bR^n$ be the subspace of those tuples $(W, U, r, \mathcal{A} ; t_0, \ldots, t_p ; x)$ such that $(t_0, \ldots, t_p ; x) \in U$. We define
\[\mathcal{E}'(M)^{n} := \vert \mathcal{E}(M)^{n}_\bullet \vert := \left(\bigsqcup_{p \geq 0} \mathcal{E}(M)_p^{n} \right)/\sim \quad\quad\quad\quad \vert U^{n}_\bullet \vert := \left(\bigsqcup_{p \geq 0} U_p^{n} \right)/\sim\]
where in both cases $\sim$ is generated by
\[(W, U, r, \mathcal{A}; t_0, \ldots, t_{i-1}, 0, t_{i+1}, \ldots t_p ; x) \sim (d_i(W, U, r, \mathcal{A}); t_0, \ldots, t_{i-1}, t_{i+1}, \ldots t_p ; x).\]
There are maps $r_p^n \colon U^n_p \to \mathcal{E}'(M)_p^{n}$ given by 
\begin{align*}
r_p^n(W, U, r, \mathcal{A} ; t_0, \ldots, t_p ; x) &= (W, U, r, \mathcal{A} ; r(t_0, \ldots, t_p ; x))
\end{align*}
which assemble to a map $r^n\colon \vert U^{n}_\bullet \vert \to \vert \mathcal{E}'(M)^{n}_\bullet \vert$.

\begin{Lemma}
The map $r^n\colon \vert U^{n}_\bullet \vert \to \vert \mathcal{E}'(M)^{n}_\bullet \vert$ has the structure of a $\CAT$ $\bR^{n-d}$-bundle $\nu_{n-d}$, and the restriction of $\nu_{n-d}$ to $\vert \mathcal{E}'(M)^{n-1}_\bullet \vert \subset \vert \mathcal{E}'(M)^{n}_\bullet \vert$ is canonically isomorphic to $\nu_{n-1-d} \times \bR$.
\end{Lemma}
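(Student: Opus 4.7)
The plan is to build the bundle charts simplex-by-simplex, using the atlas $\mathcal{A}$ that is part of the data of a simplex of $\mathcal{M}'(M)^n_\bullet$, and then to check that these charts are compatible with the face identifications in the realization. The key observation is that at the level of a single $p$-simplex $\sigma=(W,U,r,\mathcal{A})\in\mathcal{M}'(M)^n_p$ there is \emph{by fiat} a $\CAT$ $\bR^{n-d}$-bundle $r\colon U\to W$, and the face maps of $\mathcal{M}'(M)^n_\bullet$ were defined precisely so that restriction to the $i$-th face yields the corresponding restricted bundle.

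First I would describe, for each nondegenerate simplex $\sigma$ of $\mathcal{M}'(M)^n_\bullet$, the open set $\{\sigma\}\times\mathrm{int}(\Delta^p)\times W\subset|\mathcal{E}'(M)^n_\bullet|$ (using the block isomorphism $\Delta^p\times W\cong\pi^{-1}(\Delta^p)$ of \cref{prop:quasifib}) and observe that over it the preimage $(r^n)^{-1}$ is $\{\sigma\}\times\mathrm{int}(\Delta^p)\times U$, with $r^n$ acting as $\mathrm{id}\times r$; the atlas $\mathcal{A}$ then directly furnishes a $\CAT$ $\bR^{n-d}$-bundle atlas in a neighbourhood of each point lying over the open simplex. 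Slightly more work is needed to cover points that lie in lower-dimensional strata of the realisation: here one would use a small open star $\mathrm{star}(\sigma)\subset|\mathcal{M}'(M)^n_\bullet|$ around such a point and produce charts using the atlases on the (finitely many) simplices incident to $\sigma$.

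The main technical obstacle will be verifying that these charts patch together across face identifications. On a face $\Delta^{p-1}_i\subset\Delta^p$, the point $(W,U,r,\mathcal{A};t;x)$ with $t_i=0$ is identified with $(d_i(W,U,r,\mathcal{A});\hat t;x)$, and since $d_i$ was defined by restricting \emph{each} of $W$, $U$, $r$, and $\mathcal{A}$ to $\Delta^{p-1}_i\times\bR^n$, the two bundle structures agree along the face. Near but not on the face, one invokes the collaring conditions: Definition \ref{def:MM}(\ref{it:Collar}) gives a product structure on $W$ in a collar, condition (i) of \cref{def:MMprime} gives the analogous product structure on $U$, and the pullback square in \cref{def:MMprime}(\ref{it:ProdStr}) says precisely that the bundle structure on that collar is the pullback of the restricted bundle structure along the collar projection. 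Thus the $\CAT$ bundle chart in the collar is determined by, and equals, the corresponding chart on the face times $[0,\epsilon)$, giving the required compatibility with the chart coming from the adjacent lower-dimensional simplex. This is just a matter of tracing through definitions, but must be done carefully to confirm that the resulting transition functions are $\CAT$.

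For the stabilisation assertion, recall that the map $\mathcal{M}'(M)^n_\bullet\to\mathcal{M}'(M)^{n+1}_\bullet$ was defined by $(W,U,r,\mathcal{A})\mapsto(W,U\times\bR,r\circ\mathrm{proj}_U,\mathcal{A}\oplus\bR)$. Under this prescription the total space of $U^{n+1}_\bullet$ restricted over $|\mathcal{E}'(M)^n_\bullet|\subset|\mathcal{E}'(M)^{n+1}_\bullet|$ is visibly $|U^n_\bullet|\times\bR$, and the bundle map is $r^n\times\mathrm{id}_\bR$. The trivialising charts constructed above for $\nu_{n+1-d}$ at such a point are by construction obtained from those of $\nu_{n-d}$ by Cartesian product with $\bR$, so the canonical isomorphism $\nu_{n+1-d}|_{|\mathcal{E}'(M)^n_\bullet|}\cong\nu_{n-d}\times\bR$ is immediate.
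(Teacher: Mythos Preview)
Your overall strategy---build charts simplex-by-simplex using the atlas $\mathcal{A}$ that is part of the data, then check compatibility across faces via the collaring conditions---is the same as the paper's, and your treatment of the stabilisation clause is fine. But there is a genuine gap in the middle step.

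You write that for a point in a lower-dimensional stratum one should use an open star and ``produce charts using the atlases on the (finitely many) simplices incident to $\sigma$.'' The semi-simplicial set $\mathcal{M}'(M)^n_\bullet$ is \emph{not} locally finite: a given $p$-simplex is a face of infinitely many higher-dimensional simplices (already a $0$-simplex, a single embedded copy of $M$ in $\bR^n$, bounds uncountably many $1$-simplices). So the open star of a point meets infinitely many simplices, and you cannot simply invoke the atlases of finitely many of them. More seriously, you do not explain why the candidate neighbourhood you have in mind is \emph{open} in the colimit topology on the realisation, nor why the (infinitely many) local charts assemble to a single trivialisation there.

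The paper resolves exactly this difficulty by an inductive construction. One first builds an atlas $\mathcal{A}^{(k)}$ on the $k$-skeleton $\vert\mathcal{E}'(M)^n_\bullet\vert^{(k)}$, then uses the collar product structure to produce, for each $(k{+}1)$-simplex $\sigma$, a retraction $\rho_\sigma$ of a collar $\partial^\epsilon W_\sigma$ onto $\partial W_\sigma$ under which the bundle is a pullback. Gluing these collars gives an open neighbourhood $V^{(k)}$ of the $k$-skeleton inside the $(k{+}1)$-skeleton, with a retraction $\rho^{(k)}$ and a bundle isomorphism $\vert U^n_\bullet\vert^{(k+1)}\vert_{V^{(k)}}\cong(\rho^{(k)})^*\vert U^n_\bullet\vert^{(k)}$. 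For a point $x$ in the $k$-skeleton one then takes the iterated preimage
\[
V_x \;=\; V^{(k)}\cup(\rho^{(k+1)})^{-1}(V^{(k)})\cup(\rho^{(k+1)}\circ\rho^{(k+2)})^{-1}(V^{(k)})\cup\cdots,
\]
which is open in the full realisation and retracts to the $k$-skeleton, so the bundle over $V_x$ is the pullback of the already-constructed bundle on the $k$-skeleton. This is the missing idea in your argument: it is what replaces the (false) finiteness claim and supplies the actual open chart domains in the realisation.
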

\begin{proof}
A $p$-simplex 
\[\sigma = (W_\sigma, U_\sigma, r_\sigma, \mathcal{A}_\sigma) \in \mathcal{M}'(M)^{n}_p\] 
determines a map $\sigma \colon \Delta^p \to \vert \mathcal{M}'(M)^{n}_\bullet \vert$, so that $\sigma^* \vert \mathcal{E}'(M)^n_\bullet \vert = W_\sigma$. The map $r^n\colon\vert U^{n}_\bullet \vert \to \vert \mathcal{E}'(M)^{n}_\bullet \vert$ pulled back to this is precisely $r_\sigma \colon U_\sigma \to W_\sigma$, which is a locally trivial $\CAT$ $\bR^{n-d}$-bundle via the atlas $\mathcal{A}_\sigma$. 
Now let 
\[\vert \mathcal{E}'(M)^{n}_\bullet \vert^{(k)} = \left(\bigsqcup_{p=0}^k \mathcal{E}'(M)_p^n\right)/\sim\] 
denote the $k$-skeleton, similarly $\vert U^{n}_\bullet \vert^{(k)}$, and suppose given a $\CAT$ atlas $\mathcal{A}^{(k)}$ for $r^{(k)} \colon \vert U^{n}_\bullet \vert^{(k)} \to \vert \mathcal{E}'(M)^{n}_\bullet \vert^{(k)}$ which over each simplex $(W, U, r, \mathcal{A})$ restricts to the atlas $\mathcal{A}$ for $r \colon U \to W$. For each $(k+1)$-simplex 
\[\sigma = (W_\sigma, U_\sigma, r_\sigma, \mathcal{A}_\sigma) \in \mathcal{M}'(M)^{n}_{k+1}\]
there is an $\epsilon > 0$ such that for each $i=0,1,\ldots, p$ we have
\[W_\sigma \cap (\Delta^p_i(\epsilon) \times \bR^n) = \pi_i(\epsilon)^{-1}(W_\sigma\cap(\Delta_i^{p-1} \times \bR^n))\]
and
\[U_\sigma \cap (\Delta^p_i(\epsilon) \times \bR^n) = \pi_i(\epsilon)^{-1}(U_\sigma\cap(\Delta_i^{p-1} \times \bR^n))\]
and on this set $r$ commutes with the $i$th barycentric coordinate $t_i$ and satisfies $\pi_i(\epsilon) \circ r_\sigma = r_\sigma \circ \pi_i(\epsilon)$. In particular, the inclusion $\partial W_\sigma \to \partial^\epsilon W_\sigma$, where 
\begin{align*}
\partial W_\sigma =& \bigcup_{i=0}^p W_\sigma \cap (\Delta_i^{p-1} \times \bR^n) \quad\quad\text{ and }\quad\quad
\partial^\epsilon W_\sigma = \bigcup_{i=0}^p W_\sigma \cap (\Delta^p_i(\epsilon) \times \bR^n),
\end{align*}
has a retraction $\rho_\sigma$ such that $U_\sigma\vert_{\partial^\epsilon W_\sigma} \cong \rho_\sigma^* U_\sigma\vert_{\partial W_\sigma}$ as $\CAT$ $\bR^{n-d}$-bundles. Thus $\rho_\sigma^*(\mathcal{A}^{(k)})$ gives a $\CAT$ atlas over $\partial^\epsilon W_\sigma$ which is compatible with $\mathcal{A}_\sigma$. This shows that there is an atlas $\mathcal{A}^{(k+1)}$ for $r^{(k+1)} \colon \vert U^{n}_\bullet \vert^{(k+1)} \to \vert \mathcal{E}'(M)^{n}_\bullet \vert^{(k+1)}$ extending the atlas $\mathcal{A}^{(k)}$ for $r^{(k)}$.

Gluing together the sets $\partial^\epsilon W_\sigma$ for all $(k+1)$-simplices $\sigma$ gives an open subset 
\[V^{(k)} \subseteq \vert \mathcal{E}'(M)^{n}_\bullet \vert^{(k+1)}\] 
containing $\vert \mathcal{E}(M)^{n}_\bullet \vert^{(k)}$. The retractions $\rho_\sigma$ glue together to a retraction 
\[\rho^{(k)} \colon V^{(k)} \lra \vert \mathcal{E}'(M)^{n}_\bullet \vert^{(k)}\]
 such that 
 \[\vert U^{N}_\bullet \vert^{(k+1)} \vert_{V^{(k)}} \cong (\rho^{(k)})^* \vert U^{N}_\bullet \vert^{(k)}\] 
 as $\CAT$ $\bR^{n-d}$-bundles. A point $x \in \vert \mathcal{E}'(M)^{n}_\bullet \vert^{(k)}$ has an open neighbourhood 
\[V_x = V^{(k)} \cup (\rho^{(k+1)})^{-1}(V^{(k)}) \cup (\rho^{(k+1)} \circ \rho^{(k+2)})^{-1}(V^{(k)}) \cup \cdots \subset \vert \mathcal{E}'(M)^{n}_\bullet \vert\]
which retracts to $\vert \mathcal{E}'(M)^{n}_\bullet \vert^{(k)}$ via 
\[\rho_x = \rho^{(k)} \cup (\rho^{(k)} \circ \rho^{(k+1)}) \cup (\rho^{(k)} \circ \rho^{(k+1)} \circ \rho^{(k+2)}) \cup \cdots, \] 
and $\vert U^{n}_\bullet \vert\vert_{V_x} \cong \rho_x^* \vert U^{n}_\bullet \vert^{(k)}$ as $\CAT$ $\bR^{n-d}$-bundles. This proves the first part of the lemma; the second part is immediate from the formula for the map $\mathcal{E}'(M)^{n-1}_\bullet \to \mathcal{E}'(M)^n_\bullet$.
\end{proof}

Note that $\mathcal{E}'(M)^{n} = \vert \mathcal{E}'(M)^{n}_\bullet \vert$ is paracompact by a similar argument to that which shows that a cell complex is paracompact, and hence the $\CAT$ $\bR^{n-d}$-bundle $\nu_{n-d}$ is numerable, so is classified by a map $\nu_{n-d} \colon  \mathcal{E}'(M)^{n} \to \B\CAT(n-d)$. We thus obtain a diagram
\begin{equation*}
\xymatrix{
 \ar[r] & \mathcal{E}'(M)^{n} \ar[d]^{\nu_{n-d}} \ar[r] & \mathcal{E}'(M)^{n+1} \ar[d]^{\nu_{n+1-d}} \ar[r] & \mathcal{E}'(M)^{n+2} \ar[d]^{\nu_{n+2-d}} \ar[r] & {}\\
\ar[r]& \B\CAT(n-d) \ar[r]& \B\CAT(n+1-d) \ar[r]& \B\CAT(n+2-d) \ar[r] &
}
\end{equation*} 
in which each square homotopy commutes up to a preferred homotopy class of homotopies, and so taking (homotopy) colimits we obtain a map $\nu_v(\pi') \colon \mathcal{E}'(M) \to \B\CAT$. Now, the square
\begin{equation*}
\xymatrix{
\mathcal{E}'(M) \ar[r] \ar[d]^-{\pi'}& \mathcal{E}(M) \ar[d]^-{\pi}\\
\mathcal{M}'(M) \ar[r] & \mathcal{M}(M)
}
\end{equation*}
is homotopy cartesian by \cref{prop:quasifib} so the top map is a weak equivalence. Thus we may transfer the map $\nu_v(\pi')$ to a map 
\[\nu_v(\pi) \colon \mathcal{E}(M) \lra \B\CAT\]
classifying what we shall call the \emph{$\CAT$ stable vertical normal bundle}. We call its stable inverse the \emph{$\CAT$ stable vertical tangent bundle}, and denote it $T_v^s(\pi)$.

\subsection{Comparisons}\label{comparison bundle} Let us finally compare this definition with both the usual vertical tangent bundle of a fibre bundle, and the stable bundle constructed in \cite{ERW}.

\subsubsection{The vertical tangent bundles of fibre bundles}\label{sec:luyoou}

If a $\CAT$ $M$-fibre bundle is considered as a $\CAT$ $M$ block bundle, then the stabilisation of its $\CAT$ vertical tangent bundle agrees with the $\CAT$ stable vertical tangent bundle we have constructed.

The simplest way to prove this is to work universally, and produce a model $\mathcal B(M)$ for $\B\CAT(M)$ akin to $\mathcal{M}(M)$ by realising the semi simplicial set with $p$-simplices the locally flat $\CAT$ submanifolds $W \subset \Delta^p \times \R^\infty$ so that the map to the first factor is a $\CAT$ $M$-bundle. Just as in the case of block bundles there is a version $\mathcal B'(M)$ of this construction where manifolds are equipped with choices of tubular neighbourhoods $(U,r,\mathcal A)$ as before, where one additionally insists that the map $r \colon U \to W$ is fibrewise over $\Delta^p$. This space $\mathcal B'(M)$ has a forgetful map to $\mathcal M'(M)$, and the pullback of $\pi' : \mathcal E'(M) \to \mathcal M'(M)$ to $\mathcal B'(M)$ gives a universal $M$-fibre bundle $\mathcal F'(M) \to \mathcal B'(M)$, to which the stable vertical normal bundle $\nu_v(\pi')$ can be pulled back. The vertical tangent bundle of $\mathcal F'(M) \to \mathcal B'(M)$ is a stable inverse to this, by construction.

\begin{Rmk}
This comparison proves that the stable vertical tangent bundle of a topological manifold bundle only depends on its underlying block bundle and thus our constructions recover \cite[Theorem G]{BFJ}: Their \emph{strong Borel conjecture} is well-known to imply our block Borel conjecture (we will explain this in the proof of \cref{FJ vs BBC}) and therefore that fibre homotopy equivalent $M$-(block-)bundles are equivalent as block bundles, so must have isomorphic stable vertical tangent bundles.
\end{Rmk}

\subsubsection{The stable vertical tangent bundle of \cite{ERW}}
The authors of that paper considered a smooth block bundle $(p \colon E \to \vert K \vert, \mathcal{A})$ with base the geometric realisation of a finite simplicial complex $K$. In \cite[Proposition 3.2]{ERW} they constructed a stable vertical tangent bundle by choosing embeddings $e\colon E \to \vert K \vert \times \bR^n$ and $a \colon \vert K \vert \to \bR^k$ satisfying certain properties, and hence constructing a continuous map $E \to Gr_{d+k}(\bR^{n+k})$: the $(d+k)$-dimensional vector bundle classified by this map is called $t_{E,e,a}$, and is the stable vertical tangent bundle; the $(n-d)$-dimensional vector bundle classified by this map is called $n_{E,e,a}$, and is the stable vertical normal bundle. 

If the classifying map for a smooth block bundle $(p \colon E \to \vert K \vert, \mathcal{A})$  is factored up to homotopy as $\vert K \vert \to \vert \mathcal{M}(M)_\bullet^{n} \vert \to \vert \mathcal{M}(M)_\bullet\vert$, then the block bundle is concordant to a $(p' \colon E' \to \vert K \vert, \mathcal{A}')$ which comes equipped with an embedding $e' \colon E' \to \vert K \vert \times \bR^n$ a neighbourhood $E' \subset U' \subset \vert K \vert \times \bR^n$, and a retraction $r' \colon U' \to E'$ which has the structure of a smooth $\bR^{n-d}$-bundle. This yields a $(n-d)$-dimensional vector bundle on $E'$, and this is isomorphic to $n_{E',e',a'}$ for any choice of $a' \colon \vert K \vert \to \bR^k$. In particular, the associated $t_{E,e,a}$ is stably isomorphic to the stable vertical tangent bundle constructed here.

\subsubsection{Stable vertical tangent bundles of block bundles over manifolds}

Given a block bundle over a triangulated manifold, one may describe its stable vertical tangent bundle in terms of the tangent bundles of the base and total space, as follows.

\begin{Lemma}\label{compb}
Let $\vert K \vert \stackrel \cong \lra B$ be a $\PL$ triangulation of a $\CAT$ manifold (compatible in the smooth or piecewise linear cases), and $(p \colon E \to \vert K \vert, \mathcal{A})$ be a $\CAT$ block bundle. Then $E$ has the structure of a $\CAT$ manifold, and the stable vertical $\CAT$ tangent bundle is equivalent to $TE - p^*TB$. 
\end{Lemma}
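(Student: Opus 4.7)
The plan is to realise an appropriate representative of the block bundle as a locally flat $\CAT$ submanifold of $B \times \mathbb{R}^n$, and then deduce the identity from the standard normal bundle short exact sequence. Both $T^s_v$ and the stable class $TE - p^*TB$ are concordance invariants of the block bundle, so we may freely replace $(p,\mathcal{A})$ by any concordant block bundle.

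First I would use \cref{lem:prep} to lift the classifying map $B \cong |K| \to \mathcal{M}(M)$ through $\mathcal{M}'(M)$. Unpacking the model $\mathcal{M}'(M)$, this amounts to replacing $(p,\mathcal{A})$ by a concordant block bundle $p' \colon E' \to B$ for which $E' \subset B \times \mathbb{R}^n$ is a locally flat $\CAT$ submanifold equipped with an open neighbourhood $E' \subset U \subset B \times \mathbb{R}^n$ and a $\CAT$ $\mathbb{R}^{n-d}$-bundle retraction $r\colon U \to E'$. Since $B$ is itself a $\CAT$ manifold (by the compatibility clause when $\CAT \in \{\DIFF,\PL\}$, trivially for $\TOP$), the ambient space $B \times \mathbb{R}^n$ is a $\CAT$ manifold and hence the submanifold $E'$ inherits a $\CAT$ manifold structure of dimension $\dim B + d$. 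Transferring across the $\CAT$ isomorphism $E \cong E'$ produced by the collar structure on a concordance gives part (1). Moreover, by the very construction of $\nu_v \mathcal{E}(M)$ in \cref{sec:StabNormBundle}, the bundle $r\colon U \to E'$ represents the pullback of the universal stable vertical normal bundle to $E'$.

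For part (2) I would then invoke the tangent bundle exact sequence of the $\CAT$ submanifold $E' \subset B \times \mathbb{R}^n$, namely
\[ TE' \oplus \nu(E' \hookrightarrow B \times \mathbb{R}^n) \;\cong\; T(B \times \mathbb{R}^n)\vert_{E'} \;\cong\; (p')^*TB \oplus \underline{\mathbb{R}}^{n}, \]
together with the identification of $\nu(E' \hookrightarrow B \times \mathbb{R}^n)$ with the $\CAT$ $\mathbb{R}^{n-d}$-bundle $r\colon U \to E'$ provided by the tubular neighbourhood $U$. Passing to stable equivalence classes and taking the stable inverse of the normal bundle yields $T^s_v E' \cong TE' - (p')^*TB$; transferring back across $E \cong E'$ gives the claim.

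The main obstacle is making rigorous the identification, used in the middle paragraph, of the normal bundle of the submanifold embedding with the stable vertical normal bundle constructed in \cref{sec:StabNormBundle}. This is essentially built into the construction, since $\nu_v \mathcal{E}(M)$ is defined as a (stabilised) colimit of bundles $\nu_{n-d}$ whose restriction to each simplex of $\mathcal{M}'(M)$ is precisely the prepared normal $\CAT$ $\mathbb{R}^{n-d}$-bundle $r\colon U \to W$; but some bookkeeping is required to check that the tangent bundle exact sequence for the submanifold genuinely reproduces this universal bundle after pullback along the lifted classifying map.
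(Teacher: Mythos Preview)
Your approach is essentially the same as the paper's: lift to $\mathcal{M}'(M)$ via \cref{lem:prep}, realise the block bundle as $E' \subset |K| \times \bR^n$ with a prepared normal $\bR^{n-d}$-bundle $r \colon U \to E'$, and identify $r$ with the normal bundle of the embedding so that the tangent bundle splitting gives $T^s_vE' = TE' - (p')^*TB$.

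Two points of divergence are worth noting. First, for the $\CAT$ manifold structure on $E$, the paper does not appeal to $E'$ being a locally flat submanifold of $|K| \times \bR^n$; instead it argues directly that each $p^{-1}(\mathrm{St}(v))$ is block-isomorphic to $\mathrm{St}(v) \times M$ (using that $\widetilde{\CAT}(M)_\bullet$ is Kan to extend trivialisations over the simplices of the star). Your route is fine too, but the claim that $E'$ is locally flat in $|K| \times \bR^n$ at points lying over faces of $K$ is not immediate from Definition~\ref{def:MM}: it relies on the collaring conditions and really amounts to the observation that $U$ is open in $|K| \times \bR^n$, so that $E'$ is the zero section of a $\CAT$ $\bR^{n-d}$-bundle in a genuine $\CAT$ manifold. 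Second, what you flag as the ``main obstacle'' --- checking that the pulled-back $\nu_{n-d}$ agrees with $r \colon U \to E'$ --- is indeed built into the construction and is bookkeeping. The substantive step you pass over is the line ``together with the identification of $\nu(E' \hookrightarrow B \times \bR^n)$ with $r$'': in $\DIFF$ this is the tubular neighbourhood theorem, but in $\TOP$ and $\PL$ it is the stable uniqueness theorem for normal microbundles \cite[p.\ 204]{KS}, which the paper invokes explicitly and which is the real content of the comparison.
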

\begin{proof}
Let us first show that $E$ inherits a $\CAT$ manifold structure. The stars $\mathrm{St}(v) \subset \vert K \vert$ of vertices $v \in K$ have interiors which form an open cover of $\vert K \vert$, so their preimages $p^{-1}(\mathrm{St}(v))$ have interiors which form an open cover of $E$ and hence it is enough to give (compatible) $\CAT$ manifold structures to these. We have
\[p^{-1}(\mathrm{St}(v)) = \bigcup_{\sigma \ni v} W_\sigma\]
where $W_\sigma$ is the block over $\sigma$. There are $\CAT$ isomorphisms $W_\sigma \cong \sigma \times M$. As mentioned in \cref{Remark Kan}, the semi-simplicial group $\widetilde{\CAT}(M)_\bullet$ is Kan so that we may choose such $\CAT$ isomorphisms in increasing order of $\dim(\sigma)$, extending those which have already been chosen on faces of $\sigma$ (we use here that all simplices of $\mathrm{St}(v)$ have a free face). This gives a block $\CAT$ isomorphism $p^{-1}(\mathrm{St}(v)) \cong \mathrm{St}(v) \times M$, and hence induces a $\CAT$ manifold structure on $p^{-1}(\mathrm{St}(v))$.

By \cref{lem:prep} we may suppose that  $(p \colon E \to \vert K \vert, \mathcal{A})$ is classified by a map to some $\vert \mathcal{M}(M)^{n}_\bullet\vert$, so we have a neighbourhood $E \subset U \subset \vert K \vert \times \bR^n$ and a retraction $r \colon U \to E$ equipped with the structure of a $\CAT$ $\bR^{n-d}$-bundle. By the same argument as above, $U$ has a $\CAT$ manifold structure making it an open submanifold of $\vert K \vert \times \bR^n$. By the uniqueness theorem for stable normal (micro)bundles \cite[p.\ 204]{KS}, this must be isomorphic to the normal bundle of $E \subset \vert K \vert \times \bR^n$, which is stably $TE - p^*TB$.
\end{proof}

\begin{Rmk}
Let us comment on a relation to \cite[Theorem 1.1]{BM}. Phrased in the language of classifying spaces we construct for any of $\CAT \in \{\DIFF, \mathrm{PL}, \TOP\}$ a dashed arrow in the following commutative diagram:
\[\xymatrix{\B\CAT(M) \ar[d]\ar[r] & \B\mathcal G^\CAT(T^sM) \ar[d]\\
            \B\widetilde{\CAT}(M) \ar[r]\ar@{-->}[ru] & \B\mathcal G(M)}\]
Here $\G^\CAT(T^s M)$ denotes the space of homotopy self equivalences of $M$ covered by a $\CAT$-bundle map of the stable tangent bundle of $M$ and all solid arrows are the evident forgetful maps. This should be compared to \cite[Section 4]{BM}, where Berglund and Madsen construct a similar map. Since the space $\B\G^\CAT(T^sM)$ classifies fibrations with fibre $M$ equipped with a stable $\CAT$-bundle on the total space which restricts to the stable tangent bundle on each fibre, by construction, the tautological classes indeed give rise to classes in $H^*(\B\G^\CAT(T^sM);R)$. We will not make use of this fact.
\end{Rmk}

\section{An Euler class for fibrations with Poincar\'e fibre}\label{sec:Euler}

In \cite[Section 2]{RW} Randal-Williams constructs a fibrewise Euler class for a fibration $p \colon E \to B$ in which $B$ is a finite complex, the fibre $F$ is an oriented Poincar\'e duality space of formal dimension $d$, and the fibration is oriented in the sense that the monodromy action of $\pi_1(B)$ on $H_d(F;\Z)$ is trivial. However, the line of argument used essentially that $B$ is a finite cell complex, so cannot be used to obtain an Euler class for the fibration 
\[F \lra \B\G^+_*(F) \lra \B\G^+(F),\]
which is easily checked to be the universal one. To define an Euler class also when the base $B$ is not necessarily a finite complex -- e.g.\ $\B\G^+(F)$ and $\B\TOPtw{\mathstrut}^{\>\!+}(M)$ -- we shall give a different argument using parametrised stable homotopy theory.

\subsection{The fibrewise Euler class}\label{sec:ConstEul}
To motivate our construction, let us recall one definition of the Euler class of a Poincar\'e duality space $F$. If $D_F \colon H^{*}(F) \to H_{d-*}(F)$ is the Poincar\'e duality map for $F$, and $\Delta \colon F \to F \times F$ is the diagonal map, then the Euler class of $F$ may be described as $e(F) = \Delta^* D_{F \times F}\Delta_*D_F(1)$. We will mimic this definition for a fibration $p \colon E \to B$ with Poincar\'e fibre $F$ using parametrised (co)homology.

Consider the category $\Sp_B$ of spectra parametrised over $B$. Our results are essentially model independent but for definiteness' sake we shall take \cite{MS} as our primary reference. Suppressing usual subscripts to declutter the notation, as no other base will be considered in this section, let $\mathrm H\Z$ and $\Ss$ denote the trivially parametrised Eilenberg--Mac Lane and sphere spectrum, respectively. Similarly, we have dropped the notation for a disjoint base point or section. We have the diagonal map $\Delta \colon  E \to E \times_B E$ and
we claim that Poincar{\'e} duality for $F$, and the orientability hypothesis, yield a \emph{fibrewise Poincar\'e duality} equivalence
\[D_E^{fw} \colon \Sigma^d F_B(E , \mathrm H\Z) \overset{\simeq} \lra E \wedge_B \mathrm H\Z  \]
of $\mathrm H\Z$-modules, and similarly for $E \times_B E$. Here $-\wedge_B-$ denotes the derived fibrewise smash product and $F_B(-,-)$ denotes the derived fibrewise mapping spectrum. The point-set versions of these constructions are explained in \cite[Section 11]{MS} and their derived versions are established in \cite[Sections 12.6 \& 13.1]{MS}. Granted this claim, the diagram
\[\xymatrix{\mathbb S^d \ar[r]^-{\Sigma^d 1} & \Sigma^d F_B(E , \mathrm H\Z) \ar[r]^-{D_E^{fw}}_-\simeq &  E \wedge_B \mathrm H\Z \ar[rr]^-{\Delta \wedge_B \mathrm H\Z} &&  (E \times_B E) \wedge_B \mathrm H\Z}\]
\vspace{-4ex}
\[\xymatrix{ && \ar[ll]_-{D_{E \times_B E}^{fw}}^-\simeq \Sigma^{2d} F_B(E \times_B E, \mathrm H\Z) \ar[rr]^-{F_B(\Delta, \mathrm H\Z)} & &\Sigma^{2d} F_B(E , \mathrm H\Z)}\] 
 represents a well-defined element
 \[\e(p) := \Delta^* (D_{E \times_B E}^{fw})^{-1} \Delta_* D_E^{fw}(1) \in [\mathbb S^{-d}, F_B(E , \mathrm H\Z)]_B \cong [\mathbb S^{-d} \wedge_B E, \mathrm H\Z]_B \cong H^d(E;\Z).\] 
  See \cite[Theorem 5.6]{GGRW} for a related discussion.
 \begin{Def}\label{def euler}
 The class $\e(p) \in H^d(E;\Z)$ so constructed is the \emph{fibrewise Euler class} of the oriented fibration $p\colon E\to B$.
 \end{Def}

It remains to establish the equivalence $D_E^{fw}$. As in ordinary Poincar\'e duality, it will be given by cap product with a fundamental class. Consider the parametrised Atiyah--Hirzebruch spectral sequence
\[H^i(B; (E\wedge_B \mathrm H\Z)^j) \Longrightarrow (E\wedge_B \mathrm H\Z)^{i+j}(B)\]
based on the parametrised spectrum $E\wedge_B \mathrm H\Z$, compare e.g.\ \cite[Theorem 20.4.1]{MS} (with $X= B$ and $J= E \wedge_B \mathrm H\Z$ and $\mathscr L^*(B,E \wedge_B \mathrm H\Z)$
abbreviated to $(E\wedge_B \mathrm H\Z)^*$).
The fundamental classes 
\[[E_b] \in H_d(E_b;\Z) \cong H^0(b; (E_b \wedge \mathrm H\Z)^{-d})\] 
of the fibres $E_b$ assemble to a class $[E]_B \in H^0(B;(E \wedge_B \mathrm H\Z)^{-d})$ on the second page by the orientation hypothesis. 
The spectral sequence is concentrated in rows $-d, \dots, 0$ and positive columns, so the remaining groups on the diagonal $i+j=-d$ are zero, and $[E]_B$ is a permanent cycle for degree reasons as well, so abusing notation we obtain a unique class 
\[[E]_B \in (E\wedge_B \mathrm H\Z)^{-d}(B) = [\mathbb S^{d}, E \wedge_B \mathrm H\Z]_B.\]
 We may thus form the cap product with $[E]_B$, that is the map
\[\xymatrix{\mathbb S^{d} \wedge_B F_B(E, \mathrm H\Z) \ar[r]^-{[E]_B \wedge \mathrm{Id}} & (E \wedge_B \mathrm H\Z) \wedge_B F_B(E, \mathrm H\Z)}\]
\vspace{-4ex}
\[\xymatrix{\ar[r]^-{\Delta} & E \wedge_B E \wedge_B \mathrm H\Z \wedge_B F_B(E, \mathrm H\Z)
\ar[r]^-{ev} & E \wedge_B \mathrm H\Z \wedge_B \mathrm H\Z \ar[r] & E \wedge_B \mathrm H\Z,}\]
which is the sought after map $D_E^{fw}$; that it is an equivalence may be checked on fibres, where it reduces to ordinary Poincar{\'e} duality.

\begin{Rmk}
The above construction of a fibrewise Euler class clearly works for a general ring spectrum $R$, whenever the fibration admits a fibrewise fundamental class $[E]_B \in [\mathbb S^d, E \wedge_B R]$. However, even if there is a class $[E]_B \in H^0(B; (E \wedge_B R)^{-d})$ restricting to an $R$-homology fundamental class of each fibre $E_b$, it need not come from a class in $[\mathbb S^d, E \wedge_B R]$,  unless $R$ is co-connective.

For an explicit counterexample (in the spirit of this paper), consider the ring spectrum $R = \mathbb S[\tfrac{1}{2}]$ and an oriented surface bundle $\Sigma \to E \overset{\pi}\to B$. The Hurewicz map
\[ \mathbb S[\tfrac{1}{2}]_2(\Sigma) \lra H_2 (\Sigma; \mathbb Z[\tfrac{1}{2}])\]
is an isomorphism, so the $\mathbb Z[\tfrac{1}{2}]$-homology fundamental classes of the fibres of $\pi$ yield a class $[E]_B \in H^0\left(B; (E \wedge_B \mathbb S[\tfrac{1}{2}])^{-2}\right)$ restricting to a $\mathbb S[\tfrac{1}{2}]$-homology fundamental class of each fibre. If this lifted to $[E]_B \in \left[\mathbb S^2, E \wedge_B \mathbb S[\tfrac{1}{2}]\right]$ then one could follow the construction above to form an Euler class
$$e^{fw}(\pi) \in \mathbb S[\tfrac{1}{2}]^2(E)$$
which under the Hurewicz map gives the ordinary Euler class $e(T_v(\pi)) \in H^2\left(E;\mathbb Z[\tfrac{1}{2}]\right)$ of the vertical tangent bundle. But for any odd prime $p$, using that $e^p = \mathcal{P}^{1}(e)$ and that $\mathcal{P}^1$ is trivial on $H^*\left(\mathbb S[\tfrac{1}{2}];\mathbb F_p\right)$, this would mean that $e^p = 0 \in H^{2p}(E;\mathbb F_p)$ and hence that $\kappa_{e^p}=0 \in H^{2p-2}(B;\mathbb F_p)$. By taking the genus of $\Sigma$ to be large enough, and $p=3$, this contradicts \cite[Theorem 1.2]{GMT}.
\end{Rmk}

\subsection{Comparisons} Again we compare our construction to both the classical case and the definition of \cite{RW}.

\subsubsection{The Euler class of the vertical tangent bundle}\label{sec:98yyohh}
Suppose that $p \colon E \to B$ is an oriented topological fibre bundle with fibre a $d$-dimensional manifold $M$, with $B$ a CW-complex. The data $(\pi_1 \colon E \times_B E \to E, \Delta \colon E \to E \times_B E)$ defines the vertical tangent topological microbundle $T_v(p)$ over $E$. As $B$ is a CW-complex it follows that $E$ is paracompact, so by \cite{Holm} it contains a Euclidean $\bR^d$-bundle, i.e.\ there is an open neighbourhood $E \overset{s}\hookrightarrow U \subset E \times_B E$ with a projection $r \colon U \to E$ over $B$ which is a Euclidean $\bR^d$-bundle. Writing $U^+_B$ for the fibrewise 1-point compactification, there is a fibrewise collapse map
\[c \colon E \times_B E \lra U^+_B.\]
The composition 
\[E \overset{\Delta}\lra E \times_B E \overset{c}\lra U^+_B \overset{q}\lra U^+_B/B  = \mathrm{Th}(U)\]
pulls back the Thom class $u \in H^d(\mathrm{Th}(U) ;\mathbb{Z})$ to the Euler class $e(T_v(p))$ of $T_v(p)$.

To compare this with the definition above, consider the map 
\[d \colon U^+_B \lra E \wedge_B U^+_B\]
induced by the diagonal map of $U$, which fits into a commutative diagram
\begin{equation*}
\xymatrix{
(E \times_B E) \wedge_B \mathrm H \Z \ar[rr]^-{\Delta_{E \times_B E} \wedge \mathrm H \Z} \ar[d]^{c \wedge \mathrm H \Z}& & (E \times_B E \times_B E \times_B E) \wedge_B \mathrm H \Z \ar[dd]^-{E \times E \times c \wedge \mathrm H \Z}\\
U^+_B \wedge_B \mathrm H \Z \ar[d]^-{d \wedge \mathrm H \Z}\\
E \wedge_B U^+_B \wedge_B \mathrm H \Z \ar[rr]^{\Delta_E \wedge U^+_B \wedge \mathrm H \Z} \ar[d]^-{E \wedge q^*u} & & (E \times_B E) \wedge_B U^+_B \wedge_B \mathrm H \Z \ar[d]^-{E \wedge q^*u}\\
E \wedge_B  \mathrm H \Z \ar[rr]^{\Delta_E \wedge \mathrm H\Z} & & (E \times_B E) \wedge_B  \mathrm H \Z.
}
\end{equation*}
Precomposing this with the map 
\[\xymatrix{\mathbb S^{2d} \ar[rr]^-{[E \times_B E]_B} && (E \times_B E) \wedge_B \mathrm H \Z}\]
by definition gives $[E \times_B E]_B \frown c^*q^*u$ along the top. Under the equivalence $E \to U$ we have 
\[c_*[E \times_B E]_B \frown q^*u = [E]_B \in [\mathbb S^d,  E \wedge_B \mathrm H\Z]_B;\] by definition of $[E]_B$ this can be checked by restriction to a single fibre, where it reduces to Thom's description of Poincar\'e duals of submanifolds. Composition along the bottom is therefore $\Delta_*([E]_B)$. Hence $c^*q^*u = (D_{E \times_B E}^{fw})^{-1}\Delta_*([E]_B) \in H^d(E;\Z)$, and so
\[e(T_v(p)) = s^*q^*(u) = \Delta^* c^*q^*(u) = \Delta^* (D_{E \times_B E}^{fw})^{-1}\Delta_*D_E^{fw}(1) = \e(p).\]

\subsubsection{The Euler class of \cite{RW}}

The construction in \cite[Section 2]{RW} follows the proof of the `Fibre Inclusion Theorem' of Casson--Gottlieb \cite{CG}: by embedding $B$ into some $\R^n$, taking a regular neighbourhood, and doubling it, we may find an embedding $i\colon B \to B'$ into an oriented smooth $n$-manifold and a retraction $r \colon B' \to B$. Then $E' :=r^*E \to B'$ is a fibration with oriented Poincar{\'e} base and fibre, so $E'$ is also oriented Poincar{\'e}, by \cite{Gottlieb2}. Let us write $D_{E'} \colon H^*(E') \to H_{n+d-*}(E')$ for the Poincar{\'e} duality isomorphism. Similarly, $E' \times_{B'} E'$ is Poincar{\'e} with duality isomorphism $D_{E' \times_B E'}$, and using the diagonal map $\Delta \colon E' \to E' \times_{B'} E'$ we can form 
\[e(E') := \Delta^* D_{E' \times_B E'}^{-1}\Delta_*D_{E'}(1) \in H^d(E';\Z)\]
The {Euler class} $e(E) \in H^d(E;\Z)$ is then defined by restriction along $E \to E'$.

The key step in comparing this definition lies in the comparison between the usual Poincar\'e duality of $E$ and its fibrewise Poincar\'e duality. For this we need to make use of the notion of Costenoble--Waner duality, the parametrised (as opposed to fibrewise) version of Spanier-Whitehead duality, see \cite[Section 2.9]{CW} and \cite[Chapter 18]{MS}. Suppose then $p \colon E \to B$ is a fibration with $n$-dimensional oriented manifold base, and write $r \colon B \to *$ for the constant map. The associated pull back functor $r^* \colon \mathrm{Ho}\Sp \rightarrow \mathrm{Ho}\Sp_B$ admits both a left and a right adjoint denoted
\[r_! \colon \mathrm{Ho}\Sp_B \longrightarrow \mathrm{Ho}\Sp\quad \text{and} \quad r_* \colon \mathrm{Ho}\Sp_B \longrightarrow \mathrm{Ho}\Sp,\]
respectively, see \cite[Proposition 12.6.7 \& Theorem 13.1.18]{MS}. They come with canonical identifications
\[r_!(E \wedge_B \mathrm H\Z) \simeq E \wedge \mathrm H\Z \quad\quad \text{and} \quad\quad r_*(F_B(E,\mathrm H\Z)) \simeq F(E,\mathrm H\Z).\]
Let us write $\nu_B \in \Sp_{B}$ for the Spivak normal fibration of $B$, regarded as a parametrised spectrum over $B$ with fibre $\mathbb S^{-n}$. 
By parametrised Atiyah duality (\cite[Theorem 18.6.1]{MS}) $\nu_B$ is the Costenoble--Waner dual of the sphere spectrum over $B$, so by \cite[Proposition 18.1.5]{MS} (with $J = E \wedge_B \mathrm H\Z$) we have an equivalence of spectra
\[\mu \colon r_!(E \wedge_B \mathrm H\Z \wedge_B \nu_B) \longrightarrow r_*(E \wedge_B \mathrm H\Z).\]
We have a second equivalence $\mathrm{th} \colon \mathrm H\Z \wedge_B \nu_B \rightarrow \mathrm H\Z \wedge_B \mathbb S^{-n}$ coming from the orientation of $B$ (which becomes the Thom isomorphism upon applying $r_!$). 
Under the assumption that the fibres of $p$ are coherently oriented Poincar{\'e} complexes of dimension $d$ we have a third equivalence $D_E^{fw} \colon \Sigma^dF_B(E,\mathrm H\Z)\rightarrow E \wedge_B \mathrm H\Z$ from \cref{sec:ConstEul}, and 
it is a tedious but straightforward exercise to check that the diagram of equivalences
\[\xymatrix{
\Sigma^{d+n} r_*(F_B(E,\mathrm H\Z)) \ar[d]^{\Sigma^n r_*(D_E^{fw})} \ar[rr]^{D_E}       &&  r_!(E \wedge_B \mathrm H\Z) \\            
            \Sigma^n r_*(E \wedge_B \mathrm H\Z)      &&  \Sigma^n r_!(E \wedge_B \mathrm H\Z \wedge_B \nu_B)\ar[ll]_{\Sigma^n \mu} \ar[u]^{\Sigma^n \mathrm{id}_E \wedge \mathrm{th}} }\]
commutes, where $D_E\colon \Sigma^{d+n}F(E,\mathrm H\Z) \rightarrow E \wedge \mathrm H\Z$ is ordinary Poincar{\'e} duality for $E$. The comparison of Euler classes now follows by splicing this diagram with the analogous one for $E \times_B E \rightarrow B$.
%

\section{Tautological characteristic classes of block bundles}\label{sec3}

In the rest of the paper we shall be interested in \emph{oriented} block bundles. That is, we will assume that $M$ is oriented, and consider block bundles $(p \colon E \to \vert K \vert, \mathcal{A})$ for which the transition maps are orientation preserving. These are classified by analogous spaces 
\[\B{\widetilde{\CAT}}^+(M) \simeq \mathcal{M}^{+}(M),\] 
where the $p$-simplices of $\mathcal{M}^{+}(M)_\bullet$ are \emph{oriented} submanifolds $W \subset \Delta^p \times \bR^n$ which are $p$-block $\CAT^+$ isomorphic to $\Delta^p \times M$. Forgetting the orientation defines a map $f \colon \mathcal{M}^+(M)\to \mathcal{M}(M)$, which defines the universal oriented block bundle $\mathcal{E}^+(M) = f^*\mathcal{E}(M)$ with projection
\[\pi \colon \mathcal E^+(M) \lra \mathcal{M}^+(M)\]
for which we will now define tautological classes. Again the case of interest for us is that of topological block bundles, but our methods work just as well in the smooth and piecewise linear categories, so we work in that generality.

\subsection{The tautological classes}
By \cref{prop:quasifib}, the map $\pi$ is a weak quasi-fibration, i.e.\ $\pi^{-1}(v) \to \mathrm{hofib}_v(\pi)$ is a weak homotopy equivalence for any vertex $v \in \mathcal{M}^+(M)_0$. As $\pi^{-1}(v) \cong M$, the Serre spectral sequence for the map $\pi$ (replaced by a fibration) takes the form
\[E_2^{p,q} = H^p(\mathcal M^+(M); \mathcal{H}^q(M;R)) \Longrightarrow H^{p+q}(\mathcal E^+(M);R).\]
Since the block bundle is oriented, the local system $\mathcal H^d(M;R)$ is trivialised for any ring $R$ and so this spectral sequence defines (see \cite[Section 8]{BorHir}) a Gysin homomorphism
\[\pi_! \colon H^{k}(\mathcal E^+(M);R) \lra H^{k-d}(\mathcal{M}^+(M);R).\] 
The stable vertical tangent bundle constructed in Section \ref{sec:StabNormBundle}, together with the fibrewise Euler class constructed in Section \ref{sec:Euler}, give a map 
\[(T_v^s\mathcal (\pi), \e(\pi))\colon \mathcal E^{+}(M) \lra \B\mathrm S\CAT \times K(\mathbb Z,d)\]
 for any $d$-dimensional $\CAT$ manifold $M$. Using the equivalence $H^*(\mathcal M^+(M); R) \cong H^*(\B\widetilde\CAT{\mathstrut}^{\>\!+}(M);R)$ discussed in Section \ref{bla} we obtain:

\begin{Def}\label{def kappa}
The \emph{universal tautological characteristic classes}
\[\kappa_c(M) := \pi_!((T_v^s\mathcal (\pi), \e(\pi))^*(c))\]
define a homomorphism
\[ \kappa_-(M) \colon H^k(\B\mathrm S\CAT \times K(\mathbb Z,d);R) \lra H^{k-d}(\B\widetilde\CAT{\mathstrut}^{\>\!+}(M);R).\]
\end{Def}

\subsection{Comparisons}
These classes agree with the classes defined in \cite{ERW} and also restrict to the classical tautological classes for the universal smooth fibre bundle. We record this explicitly in the following propositions.
\begin{Prop}\label{comm}
The square
\begin{equation*}\label{eq:CommSq}
\xymatrix{
H^*(\BSTOP \times K(\mathbb{Z},d);R) \ar[r]^-{\kappa_-(M)} \ar[d]& H^{*-d}(\BTOPtw{\mathstrut}^{\>\!+}(M);R) \ar[d]\\
H^*(\BSO(d);R) \ar[r]^-{\kappa_-(M)} & H^{*-d}(\B\DIFF^+(M);R)
}
\end{equation*}
commutes.
\end{Prop}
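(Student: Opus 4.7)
The plan is to assemble three compatibilities coming out of the forgetful morphism $\iota \colon \B\DIFF^+(M) \to \BTOPtw{\mathstrut}^{\>\!+}(M)$: of the underlying bundles, of the characteristic data on their total spaces, and of the Gysin map. Using the models of \cref{sec:luyoou} and \cref{bla}, $\iota$ is realised by an evident inclusion of semi-simplicial sets, sending a smooth submanifold $W \subset \Delta^p \times \R^n$ fibred over $\Delta^p$ to $W$ regarded as a topological block manifold, and by construction the pullback of $\pi \colon \mathcal E^+(M) \to \mathcal M^+(M)$ along $\iota$ is (a model of) the underlying block bundle of the universal smooth oriented $M$-fibre bundle $p \colon E^{sm} \to \B\DIFF^+(M)$. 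Let $F \colon E^{sm} \to \mathcal E^+(M)$ denote the induced map on total spaces.

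Next I would compare the input characteristic data. By \cref{comparison bundle}, the composition $(T_v^s \mathcal E^+(M)) \circ F \colon E^{sm} \to \BSTOP$ is homotopic to the classifying map of the vertical tangent bundle $T_v p$ followed by the standard forgetful map $\BSO(d) \to \BSTOP$. By \cref{sec:98yyohh}, the class $F^*e^{fw}(\pi) \in H^d(E^{sm};\Z)$ equals the Euler class $e(T_v p)$, which is the pullback of the universal class along the composition $E^{sm} \xto{T_v p} \BSO(d) \to K(\Z,d)$. Hence the composite
\[E^{sm} \xto{F} \mathcal E^+(M) \xto{(T_v^s\mathcal E^+(M),\, e^{fw}(\pi))} \BSTOP \times K(\Z,d)\]
is homotopic to $E^{sm} \xto{T_v p} \BSO(d) \to \BSTOP \times K(\Z,d)$, with the last arrow being the one that induces the left vertical map of the square.

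Finally I would invoke naturality of the Gysin map: since $\pi$ is a weak quasi-fibration by \cref{prop:quasifib}, $p$ is a fibration with closed oriented $d$-manifold fibre, and $F$ restricts to an orientation-preserving homeomorphism on each fibre, naturality of the Serre spectral sequence yields the identity $\iota^* \pi_! = p_! F^*$ on cohomology. Combining the three steps, for $c \in H^k(\BSTOP \times K(\Z,d); R)$ with image $c' \in H^k(\BSO(d);R)$ under the left vertical map, both composites around the square equal $p_!((T_v p)^* c') = \kappa_{c'}(M)$, proving commutativity.

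The main obstacle is the middle step: the two model-dependent constructions of the stable vertical tangent bundle and of the fibrewise Euler class must be compared on the nose with their classical counterparts for smooth bundles. Luckily both comparisons have essentially been carried out in \cref{comparison bundle} and \cref{sec:98yyohh}, so the proof reduces to citing them in conjunction with the naturality of the Gysin map.
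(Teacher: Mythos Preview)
Your proof is correct and follows essentially the same approach as the paper. The paper's own proof is a single sentence citing Sections~\ref{sec:luyoou} and~\ref{sec:98yyohh}; you have simply unpacked what that citation entails---the compatibility of the stable vertical tangent bundle, of the fibrewise Euler class, and the naturality of the Gysin map---so your argument is a fleshed-out version of theirs rather than a different route.
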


The implications of this statement depend on the coefficient ring $R$, mostly due to the fact that relevant properties of the left vertical map depend on the choice of coefficients: The work of Kirby--Siebenmann (specifically \cite[p.\ 200]{KS}) implies that the map $\BSO \to \BSTOP$ is a rational equivalence and thus the left vertical map in the diagram is a surjection when $R$ is $\Q$. Therefore, all \emph{rational} tautological classes in $H^*(\B\DIFF^+(M);\Q)$ are in the image of the upper composition. The latter also holds for $R = \mathbb Z/2$ by Thom's description of the Stiefel-Whitney classes. As mentioned in the introduction this is not true for $R = \Z$, but see \cref{intres}. 

If $M$ satisfies the block Borel conjecture then fibre homotopy equivalent smooth fibre bundles with fibre $M$ are in fact concordant as topological block bundles, and so the above diagram then shows that they have the same rational tautological classes; this recovers \cite[Corollary G.1]{BFJ}. If $M$ is a nonpositively curved manifold then it does satisfy the block Borel conjecture (by \cref{FJ vs BBC} as its fundamental group satisfies the Farrell--Jones conjecture by \cite{FJ2}), so the above applies; this generalises \cite[Corollary C.1]{BFJ}. 

\begin{Prop}\label{comm2}
Under the maps 
\[\B\TOP^+(M) \lra \B\TOPtw{\mathstrut}^{\>\!+}(M) \quad \text{    and    } \quad  \B\DIFFtw{\mathstrut}^{\>\!+}(M) \lra \B\TOPtw{\mathstrut}^{\>\!+}(M)\] 
the tautological classes just defined restrict to those of Ebert and Randal-Williams.
\end{Prop}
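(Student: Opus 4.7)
The plan is essentially an exercise in assembly: both the universal tautological classes from \cref{def kappa} and those of \cite{ERW, RW} are constructed from three pieces of data on the total space of the universal bundle---a stable vertical tangent bundle, a fibrewise Euler class, and a Gysin homomorphism---so it suffices to verify that each of these three ingredients agrees after restriction along each of the two maps in the statement.

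For the map $\B\DIFFtw{\mathstrut}^{\>\!+}(M) \to \B\TOPtw{\mathstrut}^{\>\!+}(M)$, I will invoke the comparison of stable vertical tangent bundles from \cref{comparison bundle}, where the subsection on the stable vertical tangent bundle of \cite{ERW} already performs precisely this identification. For the Euler class, I will observe that the class used in \cite{ERW} is simply the Euler class of the vertical tangent vector bundle of the smooth block bundle, which by the discussion in \cref{sec:98yyohh} coincides with the restriction of our fibrewise Euler class $\e$ (since for a smooth block bundle the vertical tangent microbundle is represented by an honest vector bundle). The Gysin homomorphisms in both constructions arise from the Serre spectral sequence of the same underlying fibration, so they agree automatically.

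For the map $\B\TOP^+(M) \to \B\TOPtw{\mathstrut}^{\>\!+}(M)$, I will use the comparison of stable vertical tangent bundles in \cref{sec:luyoou}, which identifies the restriction of $T_v^s\mathcal E^+(M)$ with the stable vertical tangent bundle of a topological $M$-fibre bundle. For the Euler class I will apply the comparison at the end of \cref{sec:Euler}, in which Costenoble--Waner duality is used to identify our fibrewise Poincar\'e duality map $D_E^{fw}$ with ordinary Poincar\'e duality after pullback to a suitable oriented manifold base, producing the class of \cite{RW}. Once again the Gysin maps agree by construction.

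The main subtlety to bear in mind is that the constructions of \cite{ERW, RW} are phrased over a finite simplicial complex base, whereas $\B\TOPtw{\mathstrut}^{\>\!+}(M) \simeq \mathcal{M}^+(M)$ is not of this form. I will handle this by naturality: both constructions are compatible with pullback along classifying maps from finite complexes, so agreement of the classes on all such pullbacks---which is exactly what the previous two paragraphs provide---forces agreement of the universal classes themselves, because cohomology of $\mathcal{M}^+(M)$ is determined by restriction to its finite sub-semi-simplicial sets.
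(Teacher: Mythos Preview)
Your overall strategy---compare the three ingredients (stable vertical tangent bundle, fibrewise Euler class, Gysin map) along each of the two maps---is exactly the paper's, and the handling of the finiteness issue by naturality is fine. There is, however, a concrete mistake in your treatment of the smooth block case.

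You write that ``the class used in \cite{ERW} is simply the Euler class of the vertical tangent vector bundle of the smooth block bundle'' and then invoke \cref{sec:98yyohh}. But a smooth \emph{block} bundle does not in general carry an unstable $d$-dimensional vertical tangent bundle: only a stable one is produced in \cite{ERW}, and the construction in \cref{sec:98yyohh} explicitly requires a fibre bundle (it uses the diagonal $E \to E \times_B E$ as a microbundle, which makes no sense for a block bundle). The Euler class relevant to \cite{ERW,RW} in the block setting is the fibrewise Poincar\'e-duality Euler class of \cite[Section 2]{RW}, and the correct comparison is therefore the Costenoble--Waner argument at the end of \cref{sec:Euler} (the subsection ``The Euler class of \cite{RW}''), together with \cite[Lemma 2.2 (iv)]{RW}. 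You have in effect swapped the Euler-class comparisons between your two cases: the Costenoble--Waner comparison is what is needed for $\B\DIFFtw{\mathstrut}^{\>\!+}(M)$, while \cref{sec:98yyohh} is available (and natural) for $\B\TOP^+(M)$.

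For the topological fibre bundle case the paper proceeds a little differently from you: it first invokes \cite[Proposition 4.2]{ERW} to reduce to bundles over a manifold base, then uses \cref{compb} for the stable vertical tangent bundle and \cite[Lemma 2.2 (ii) \& (iv)]{RW} for the Euler class. Your route via \cref{sec:luyoou} and the Costenoble--Waner comparison is reasonable in spirit, but to make it a proof you must still check that it matches what \cite{ERW} actually constructs for topological bundles (which is defined over manifold bases), and that verification is precisely what the reduction via \cite[Proposition 4.2]{ERW} accomplishes.
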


\begin{proof}[Proof of Propositions \ref{comm} \& \ref{comm2}]
\cref{comm} follows immediately from Sections \ref{sec:luyoou} and \ref{sec:98yyohh}, which together say that the stable vertical tangent microbundle and Euler class of a smooth fibre bundle agree with the objects of the same names we have associated to the corresponding $\TOP$ block bundle. 

For \cref{comm2} let us first consider the easier case of block diffeomorphisms. As discussed in \cref{comparison bundle}, the $\DIFF$ stable vertical tangent bundle constructed in \cref{sec:StabNormBundle} coincides with the one of \cite{ERW}. Likewise, the discussion after \cref{def euler} shows that the fibrewise Euler class constructed in \cref{sec:ConstEul} restricts to that of \cite{RW}. By \cite[Lemma 2.2 (iv)]{RW} the claim follows.
Now consider the case of topological bundles. By construction (see \cite[Proposition 4.2]{ERW}) it suffices to treat the case of a manifold base, in which case \cref{compb} implies that the $\TOP$ vertical tangent bundle of \cite{ERW} stabilises to the $\TOP$ stable vertical tangent bundle constructed in \cref{sec:StabNormBundle}, and \cite[Lemma 2.2 (ii) \& (iv)]{RW} shows that the Euler class of the $\TOP$ vertical tangent bundle agrees with the fibrewise Euler class constructed in \cref{sec:ConstEul}. (These results are stated for $\CAT = \DIFF$ in \cite{RW}, but their proofs apply for $\CAT = \TOP$ too.) 
\end{proof}

Finally, let us warn the reader that they should resist the temptation to think that the tautological classes in $H^*(\B\TOPtw{\mathstrut}^{\>\!+}(M);R)$ behave like their counterparts in $H^*(\B\TOP^+(M);R)$ or $H^*(\B\DIFF^+(M);R)$ as there is no reason for the homomorphism
\[\kappa_{-}(M) \colon H^*(\BSTOP \times K(\Z,d);R) \lra H^{*-d}(\B\TOPtw{\mathstrut}^{\>\!+}(M);R)\]
 to factor through $H^*(\BSTOP(d);R)$. Indeed, in the smooth case \cite[Proposition 3.1]{RW}  implies that
\[\kappa_{e^2}(W_g) \neq \kappa_{p_n}(W_g) \in H^{2n}(\B\DIFFtw{\mathstrut}^{\>\!+}(W_g); \Q)\]
 for $W_g = (S^n \times S^n)^{\#g}$ and large $g \in \mathbb N$ and in \cite[Theorem 3]{ERW} the authors construct an $8$-manifold $M$ with 
 \[0 \neq \kappa_{p_5}(M) \in H^{12}(\B\DIFFtw{\mathstrut}^{\>\!+}(M);\Q).\]
 While these results exclude a factorisation
\[\kappa \colon H^*(\mathrm{BSO};\Q) \longrightarrow H^*(\mathrm{BSO}(d);\Q) \longrightarrow H^*(\B\DIFFtw{\mathstrut}^{\>\!+}(M);\Q)\]
 in the case of smooth block bundles, they do not suffice to exclude the analogue for topological block bundles. Indeed, by work of Weiss \cite{Weiss}, neither $e^2 = p_n \in H^{4n}(\BSTOP(2n);\Q)$ nor $0 = p_m \in H^{4n}(\BSTOP(2n);\Q)$ for $m > n$ hold in general. In fact, it seems to be our lack of knowledge of $H^*(\BSTOP(n);\Q)$ that prevents us from disproving this factorisation. 

In a similar direction, our methods do not lift \emph{all} rational tautological classes of topological fibre bundles to topological block bundles, as the ring $H^*(\BSTOP(n);\Q)$ is not generated by Euler and Pontryagin classes. This may be seen as follows. The space $\frac{\mathrm{STop}}{\mathrm{SO}}$ is rationally contractible (by \cite[p.\ 200]{KS}) and the map $\frac{\mathrm{STop}(n)}{\mathrm{SO}(n)} \to \frac{\mathrm{STop}}{\mathrm{SO}}$ is $(n+2)$-connected (by \cite[p.\ 246]{KS}), so  $\frac{\mathrm{STop}(n)}{\mathrm{SO}(n)}$ is rationally $(n+1)$-connected. Combined with Morlet's identification
\[\B\DIFF_\partial(D^n) \simeq \Omega^n_0 \left(\frac{\mathrm{STop}(n)}{\mathrm{SO}(n)}\right)\]
(see \cite[p.\ 241]{KS}) and Farrell--Hsiang's calculation \cite{FH} of the rational homotopy groups of $\B\DIFF_\partial(D^n)$, we find that for $n$ odd and large enough
\[\pi_i\left(\frac{\mathrm{STop}(n)}{\mathrm{SO}(n)}\right) \otimes \mathbb Q =
\begin{cases}
0 & 0 < i < n+4\\
\Q & i=n+4.
\end{cases}\]
This implies that $H^{n+4}\left(\frac{\mathrm{STop}(n)}{\mathrm{SO}(n)}; \Q\right) \cong \Q$ is the lowest degree non-vanishing cohomology group. As $\B\mathrm{SO}(n)$ has no rational cohomology in odd degrees, in the Serre spectral sequence for the fibration
\[\frac{\mathrm{STop}(n)}{\mathrm{SO}(n)} \lto \B\mathrm{SO}(n) \to \BSTOP(n)\]
it follows that the transgression 
\[\Q \cong H^{n+4}\left(\frac{\mathrm{STop}(n)}{\mathrm{SO}(n)};\Q\right) \lra H^{n+5}(\BSTOP(n);\Q)\] 
is injective. By definition its image vanishes in $H^{n+5}(\B\mathrm{SO}(n); \Q)$, so it cannot be a polynomial in the Euler and Pontryagin classes.

\section{Block homeomorphisms of aspherical manifolds}\label{sec4}

In the previous sections we have established that the tautological classes of smooth manifold bundles extend to topological block bundles. As this paper aims to understand the tautological classes for aspherical manifolds, we will now discuss the homotopy type of the space $\B\wt{\TOP}(M)$ provided $M$ is aspherical. This depends on what are called the full Farrell--Jones conjectures, see \cref{sfj}. We will call a group satisfying them a \emph{Farrell--Jones group}.

\subsection{The block Borel conjecture}
Let $M$ be an aspherical manifold and recall from the introduction that $M$ is said to satisfy the \emph{block Borel conjecture} if the canonical map
\[\iota \colon \B\wt{\TOP}(M) \lto \B\G(M) \]
is a weak equivalence, and 
the \emph{{\pbc} (resp.\ with $R$-coefficients)} if the restriction
\[\iota_h \colon \B\wt{\TOP}_h(M) \lto \B\G_0(M) \]
is  a weak equivalence (resp.\ an $R$-homology equivalence). The block Borel conjecture implies the {\pbc}, by pulling back the universal cover of the target, which in turn implies the {\pbc}  with $R$-coefficients for any ring $R$.

The next proposition partly concerns surgery on 4-dimensional topological manifolds, so we recall some terminology from that theory: a group is said to be \emph{good} if it satisfies the $\pi_1$-Null Disk Lemma, see e.g.\ \cite[Introduction]{FT}. Groups that are known to be good include elementary amenable groups and groups of subexponential growth, see \cite{Freedman} and \cite{FT}.

\begin{Prop}\label{FJ vs BBC}
Let $M$ be an aspherical manifold whose fundamental group is a Farrell--Jones group. 
\begin{enumerate}[(i)]
\item If either the dimension of $M$ is at least 5, or the dimension of $M$ is 4 and its fundamental group is good, then the block Borel conjecture holds for $M$. 
\item If the dimension of $M$ is 4, then the {\pbc} holds for $M$.
\end{enumerate}
\end{Prop}

This proposition implies that the block Borel conjecture holds for a very large class of aspherical manifolds, see \cref{status FJ conjecture}.

\begin{proof}
Let us first sketch the argument, following \cite[Proposition 0.3]{BL2}, for part (i) in the case $\dim(M) \geq 5$. Denote by $\G_s(M) \subseteq \G(M)$ the space of \emph{simple} homotopy self equivalences of $M$, which is a collection of path components of $\G(M)$ containing both $\G_0(M)$ and the image of $\TOPtw(M)$. We denote by $\G(M)/\wt{\TOP}(M)$ the fibre of $\iota$, and by $\G_s(M)/\wt{\TOP}(M) \subset \G(M)/\wt{\TOP}(M)$ the evident collection of path components. 
From surgery theory for $k \geq 1$ one has isomorphisms
\[\pi_k(\G_s(M)/\wt{\TOP}(M)) \cong \s^\TOP_\partial(M \times \Delta^k)\]
to the higher structure sets of $M$ appearing in the surgery exact sequence
\[\cdots \lra  L^q_{d+{k+1}}(\mathbb Z[\pi_1(M)]) \lra \s^\TOP_\partial(M \times \Delta^k) \lra \mathcal N^\TOP_\partial(M \times \Delta^k) \stackrel{\sigma}{\lra} L^q_{d+k}(\mathbb Z[\pi_1(M)]) \lra \cdots\]
and one has an inclusion $\pi_0(\G_s(M)/\wt{\TOP}(M)) \subseteq \s^\TOP(M)$. By the work of Ranicki, $\sigma$ can be identified with the assembly map 
\[ \mathrm{L}^q\Z\langle 1 \rangle_{d+k}(M) \lto \mathrm{L}^q_{d+k}(\Z\pi_1(M)) \] 
in (based) quadratic L-theory, see \cite[Theorem 18.5]{Ranicki} where $\mathbb L.$ denotes our $\mathrm L^q\mathbb Z\langle 1 \rangle$. As explained in \cite{BL2} the Farrell--Jones conjecture in K- and L-theory imply that for an aspherical manifold
\[\mathrm L^q\Z_*(M) \lto \mathrm L^q_*(\Z\pi_1(M))\]
is an isomorphism (the K-theoretic Farrell--Jones conjecture for $\pi_1(M)$ is used to change from universally decorated to based L-theory as explained in the proof of \cite[Proposition 0.3 (i)]{BL2}). Since the map 
\[\mathrm{L}^q\Z\langle 1 \rangle_*(M) \lra \mathrm{L}^q\Z_*(M)\] 
is injective in degree $d$ and an isomorphism in higher degrees, $\s_\partial^\TOP(M \times \Delta^k)$ is trivial for all $k$. The Farrell--Jones conjecture also implies that $\G_s(M) = \G(M)$, as their difference is measured by the Whitehead torsion which takes values in the cokernel of the assembly map in K-theory. It follows that $\G(M)/\wt{\TOP}(M)$ is contractible and hence that $\iota$ is a homotopy equivalence. 

The only point where we used the dimension assumption on $M$ was to make sure that the surgery sequence is exact. By Freedman's results, this also holds for $4$-manifolds whose fundamental group is good, see \cite[Theorem 11.3A]{FQ}. Thus by the same reasoning we obtain part (i) of the proposition also for $4$-manifolds with good fundamental group, see also \cite[Section 11.5]{FQ}.

To prove (ii),
we apply the arguments above to $M \times \Delta^1$, to see that $\pi_k(\G(M)/\TOPtw(M))$ vanishes for $k \geq 1$. Therefore $\pi_i(\iota)$ is injective for $i=1$ and bijective for $i \geq 2$. As it is injective on $\pi_1$ it follows that $\TOPtw_h(M)$ is path-connected, so that passing to universal covers then proves that $\iota_h$ is a weak equivalence, and hence that $M$ satisfies the {\pbc}.
\end{proof}

\begin{Rmk}
For our purposes it will often suffice to know that $\iota_h \colon \B\TOPtw_h(M) \to \B\G_0(M)$  is an equivalence after inverting 2, or even rationally. To obtain this weaker statement one need not assume the full Farrell--Jones conjectures, a variant of the L-theoretic conjecture is enough, as we will explain in \cref{pbc2}.
\end{Rmk}

\begin{Rmk}\label{vanishing low dimensions}
We do not know the validity of the block Borel conjecture for aspherical manifolds of dimension less than 4. The vanishing results for tautological classes, however, are known in dimensions at most 3 anyway (at least rationally in case of dimension $3$). Indeed, in dimension 1 this is a straightforward calculation since $S^1$-bundles can always be given linear structures, in dimension 2, the orientable aspherical manifolds are exactly the surfaces $\Sigma_g$ with $g \geq 1$. For $g \geq 2$ the space $\DIFF_h(\Sigma_g)$ is contractible by a result of \cite{EE}, so there is nothing to prove. For the torus $T^2$ one uses that the maps 
\[\DIFF(T^2) \lto \TOP(T^2) \lto \G(T^2)\] 
are homotopy equivalences and that the left translation map $ T^2 \to \G(T^2)$ factors over $\DIFF(T^2)$ and is an equivalence onto $\DIFF_h(T^2)$. Then one uses the same calculation as in the case of principal $S^1$-bundles.
For 3-dimensional manifolds there is a general vanishing result due to Ebert, see \cite[Corollary 1.3]{Ebert}: He proves that rational, tautological classes vanish in the cohomology of $\B\DIFF^+(M)$ even for non-aspherical $M$. For more details, see the discussion in \cite[p.\ 10]{BFJ}. 
\end{Rmk}

\subsection{The Farrell--Jones conjectures}\label{sfj} 
Let us report now on the status of the Farrell--Jones conjectures to convince the reader that their assumption is not too restrictive.
We state the following result for the class $\calfj$ of Farrell--Jones groups, that is those groups which satisfy the full Farrell-Jones conjectures; this terminology refers to the following version, the details of which are explained in \cite[Section 11]{Lueck}: A group $G$ is contained in $\calfj$ if for every finite group $F$ and every additive $G \wr F$-category $\mathcal A$ (with involution in the L-theoretic case) both maps
\[\mathrm K\mathcal A^{G \wr F}_n(E_\VC G \wr F) \lto \mathrm K_n(\mathcal A;G\wr F) \quad \text{ and } \quad \mathrm L\mathcal A^{G \wr F}_n(E_\VC G \wr F) \lto \mathrm L_n(\mathcal A;G\wr F)\]
are isomorphisms for all $n \in \mathbb Z$. Here, K denotes non-connective K-theory and L universally decorated, that is $\langle -\infty\rangle$, L-theory and $E_\VC$ denotes the classifying space for the family of virtually cyclic subgroups.

This version of the Farrell--Jones conjecture contains the more classical form saying that for any ring $R$ and a discrete group $G$, the assembly maps
\[ \mathrm K R^G_*(E_\VC G) \lto \mathrm K_*(RG) \quad \text{ and } \quad \mathrm L R^G_*(E_\VC G) \lto \mathrm L_*(RG)\]
are isomorphisms. For a torsion-free group these two conjectures imply that
\[ \mathrm{K}\Z_*(\B G) \lto \mathrm{K}_*(\Z G) \quad \text{ and } \quad \mathrm{L}\Z_*(\B G) \lto \mathrm{L}_*(\Z G)\] 
are equivalences (even for decoration $\langle 2 \rangle$, i.e.\ based, L-theory as needed in the proof of \cref{FJ vs BBC}): Any torsion-free virtually cyclic group is in fact infinite cyclic (this follows easily from \cite[Lemma 2.5]{FJ}) so $E_\VC G = E_{\mathrm{cyc}} G$. But the relative assembly maps (with universal decorations in L-theory)
\[ \mathrm{K}\Z_*(\B G) \lto \mathrm{K}\Z^G_*(E_{\mathrm{cyc}} G) \quad \text{ and } \quad \mathrm{L}\Z_*(\B G) \lto \mathrm{L}\Z^G_*(E_{\mathrm{cyc}} G)\] 
are isomorphisms for every group, see \cite[Proposition 2.10 (ii)]{LR}. Finally, the K-theoretic conjecture allows one change to the desired decorations. 

\begin{Thm}\label{status FJ conjecture}
The class $\calfj$ has the following properties
\begin{enumerate}
\item[(i)] \label{the:status_of_the_Full_Farrell-Jones_Conjecture:Classes_of_groups:hyperbolic_groups}
It contains hyperbolic groups and finite dimensional $\CAT(0)$-groups;
\item[(ii)] \label{the:status_of_the_Full_Farrell-Jones_Conjecture:Classes_of_groups:solvable}
it contains virtually solvable groups;
\item[(iii)] \label{the:status_of_the_Full_Farrell-Jones_Conjecture:Classes_of_groups:lattices}
it contains (not necessarily cocompact) lattices in almost connected Lie groups;
\item[(iv)] \label{the:status_of_the_Full_Farrell-Jones_Conjecture:Classes_of_groups:S-arithmetic}
it contains $S$-arithmetic groups;
\item[(v)] it is closed under passing to subgroups;
\item[(vi)] it is closed under taking finite products, free products, and directed colimits;
\item[(vii)] it is almost closed under extensions, more precisely, let $ 1 \to K \to G \to Q \to 1$ be an extension of groups. Suppose that for any cyclic subgroup 
$C \subseteq Q$ the group $p^{-1}(C)$ belongs to $\calfj$ 
and that the group $Q$ belongs to $\calfj$.
Then $G$ belongs to $\calfj$;
\item[(viii)] if $H$ is a finite index subgroup of $G$, and $H$ is in $\calfj$, then also $G$ is in $\calfj$.
\end{enumerate}
\end{Thm}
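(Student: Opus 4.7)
The plan is to treat this theorem not as a single argument but as a curated summary of the known status of the Farrell--Jones conjectures, assembled from a substantial body of literature. Accordingly, the proof will naturally split into two parts: the concrete input classes (i)--(iv), each of which is a major theorem in its own right, and the hereditary properties (v)--(viii), which together leverage the basic cases into a much larger class.

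For (i)--(iv) I would simply cite the relevant landmark papers: hyperbolic groups and finite-dimensional $\mathrm{CAT}(0)$-groups are handled by Bartels--Lück--Reich and by Bartels--Lück; virtually solvable groups are due to Wegner and Farrell--Roushon; lattices in almost connected Lie groups are settled by Bartels--Farrell--Lück together with Kammeyer--Lück--Rüping for the non-cocompact case; $S$-arithmetic groups are due to Rüping. Each of these rests on substantial geometric input -- flow spaces, controlled topology, geodesic flows on $\mathrm{CAT}(0)$-spaces -- that cannot be reasonably compressed into a plan, so the only sensible strategy is to defer to the original sources.

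The closure properties (v)--(viii) admit a more uniform treatment. The central tool is the transitivity principle for assembly maps: for an extension $1 \to K \to G \to Q \to 1$, if $Q$ satisfies the conjecture for the family $\VC$ of virtually cyclic subgroups, and for each virtually cyclic $C \subseteq Q$ the preimage $p^{-1}(C)$ also satisfies the conjecture, then so does $G$; this is (vii). The wreath-product refinement used in the statement goes back to Bartels--Echterhoff--Lück, and their inheritance framework also yields the closure under subgroups (v) and under directed colimits appearing in (vi). Finite products and free products reduce to the transitivity principle by suitable extensions and an Euler-characteristic type argument respectively.

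Finally, (viii) should follow from (vii): given $H \leq G$ of finite index with $H \in \calfj$, pass to the normal core $N \unlhd G$ of finite index contained in $H$, which lies in $\calfj$ by (v); then apply (vii) to $1 \to N \to G \to G/N \to 1$, noting that $G/N$ is finite -- hence trivially in $\calfj$ -- and that any preimage $p^{-1}(C)$ for a cyclic subgroup $C$ of $G/N$ is a finite extension of $N$, which lies in $\calfj$ by another application of the transitivity principle. The principal obstacle here is conceptual rather than mathematical: the statement reflects the present frontier of knowledge, and any honest proof is necessarily a carefully sourced bibliographic compilation rather than a self-contained derivation.
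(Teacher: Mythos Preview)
Your proposal is correct in spirit and matches the paper's approach exactly: the paper does not prove this theorem at all but simply states ``For a more complete and detailed status of the Farrell--Jones conjectures we refer the reader to \cite{RV, BL, BFL, KLR} and the references therein.'' Your plan is in fact considerably more detailed than what the paper provides.

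One small gap worth flagging: your derivation of (viii) from (vii) is circular as written. You reduce to showing that a finite extension $p^{-1}(C)$ of $N$ lies in $\calfj$, and justify this ``by another application of the transitivity principle'' --- but that is precisely the statement (viii) you are trying to establish. The actual mechanism is the wreath-product formulation you already mentioned: if $N \unlhd G$ has finite quotient $F$, then $G$ embeds into $N \wr F$, and the very definition of $\calfj$ (requiring the assembly isomorphism for $G \wr F$ for all finite $F$) guarantees that $N \in \calfj$ forces the isomorphism for $N \wr F$; then (v) gives it for the subgroup $G$. This is why the wreath-product refinement is built into the definition in the first place.
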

For a more complete and detailed status of the Farrell--Jones conjectures we refer the reader to \cite{RV, BL2, BFL, KLR, R} and the references therein.

If one is willing to neglect 2-torsion, then the L-theoretic Farrell--Jones conjecture has further useful properties. For this we need to recall the following version known as the \emph{fibered} Farrell--Jones conjecture, see \cite[Section 2.1]{BLR2}. For a fixed group $G$ its L-theoretic version after inverting 2 states that for any group homomorphism $\phi \colon H \to G$ the assembly map
\[ \mathrm L R^H_*(E_{\phi^*(\VC)}H)[\tfrac{1}{2}] \lto \mathrm L_*(RH)[\tfrac{1}{2}]\]
is an isomorphism; here, for any family $\mathcal{F}$ of subgroups of $G$ and a homomorphism $\phi \colon H \to G$, we denote by $\phi^*(\mathcal F)$ the family of subgroups $K \subseteq H$, such that $\phi(K) \in \mathcal F$. After inverting 2, L-theory spectra with different decorations become equivalent, see \cite[Remark 1.22]{LR}, therefore we do not need to consider the K-theoretic analogue for this. We will denote by $\LFJ$ the class of groups that satisfy this conjecture. Since one can choose the homomorphism $\id \colon G \to G$, it contains the classical version of the assembly maps with $2$ inverted. The class $\LFJ$ has better closure properties than $\calfj$, as described in the following proposition.

\begin{Prop}\label{LFJ}\mbox{}
\begin{enumerate}
\item[(i)] Farrell--Jones groups lie in $\LFJ$,
\item[(ii)] elementary amenable groups are contained in $\LFJ$,
\item[(iii)] it is closed under passage to subgroups, directed colimits, and amalgamated products, and
\item[(iv)] it is almost closed under extensions, more precisely, let $ 1 \to K \to G \to Q \to 1$ be an extension of groups. Suppose that for any finite subgroup
$C \subseteq Q$ the group $p^{-1}(C)$ belongs to $\LFJ$ and that the group $Q$ belongs to $\LFJ$.
Then $G$ belongs to $\LFJ$.
\end{enumerate}
\end{Prop}

Note that, in particular, $\LFJ$ is closed under extensions of torsion-free groups, which will be the salient feature of (iv) later.

To show \cref{LFJ} we need another characterisation of $\LFJ$. To state it let us denote by $\LFJfin$ the class of groups satisfying the variant of the fibered conjecture which takes into account the family of finite subgroups rather than the family of virtually cyclic subgroups.

\begin{Lemma}\label{after inverting 2 in fibered case fin equals vc}
We have that $\LFJ = \LFJfin$.
\end{Lemma}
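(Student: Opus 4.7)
The plan is to show that for every homomorphism $\phi \colon H \to G$ the relative assembly map
\[ \alpha_\phi \colon \mathrm{L}R^H_*(E_{\phi^*(\fin)}H)[\tfrac{1}{2}] \lra \mathrm{L}R^H_*(E_{\phi^*(\VC)}H)[\tfrac{1}{2}] \]
is an isomorphism. Granted this, the equality $\LFJ = \LFJfin$ follows by a two-out-of-three argument applied to the factorisation of the $\phi^*(\fin)$-assembly map through the $\phi^*(\VC)$-assembly map: the latter being an isomorphism for all $\phi$ defines $\LFJ$, while the composite being an isomorphism for all $\phi$ defines $\LFJfin$.

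To verify that $\alpha_\phi$ is an isomorphism I would invoke the transitivity principle for equivariant homology theories: if $\mathcal{F} \subseteq \mathcal{F}'$ are families of subgroups of $H$, then the relative assembly map is an isomorphism provided that for every $K \in \mathcal{F}'$ the assembly map of $K$ with respect to $\mathcal{F} \cap K$ is. Applied to $\mathcal{F} = \phi^*(\fin)$ and $\mathcal{F}' = \phi^*(\VC)$, this reduces the claim to showing that for every $K \leq H$ with $\phi(K) \in \VC$ the map
\[ \mathrm{L}R^K_*(E_{(\phi|_K)^*(\fin)}K)[\tfrac{1}{2}] \lra \mathrm{L}_*(RK)[\tfrac{1}{2}] \]
is an isomorphism; in other words, to the fibered L-theoretic conjecture with family $\fin$ after inverting $2$ for any group admitting a homomorphism to a virtually cyclic group. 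If $V := \phi(K)$ is finite, then $(\phi|_K)^*(\fin)$ is the family of all subgroups of $K$, making the assembly map tautologically an isomorphism. Otherwise $V$ has a presentation as a semidirect product $F \rtimes \mathbb Z$ or an amalgamated free product $V_1 *_{F_0} V_2$ with finite constituents, and pulling back along $\phi|_K$ one obtains corresponding Mayer--Vietoris decompositions in L-theory whose error terms are Cappell's UNil-groups; these vanish after inverting $2$, yielding the desired isomorphism.

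The main obstacle I anticipate is the careful application of the transitivity principle in the fibered setting: one must verify that the restriction of $\phi^*(\VC)$ to a subgroup $K$ coincides with the pullback of $\VC$ along $\phi|_K$, and that the transitivity principle applies to L-theoretic equivariant homology with $\mathbb Z[\tfrac{1}{2}]$-coefficients. Modulo these essentially formal points, the proof reduces to classical facts about L-theory of virtually cyclic groups.
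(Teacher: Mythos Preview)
Your proposal is correct and follows essentially the same strategy as the paper: both reduce via the transitivity principle (the paper cites this as \cite[Theorem 2.4]{BLR2}) to showing that virtually cyclic targets satisfy the fibered $\fin$-conjecture after inverting $2$, and both finish by invoking the Mayer--Vietoris/Cappell exact sequences in L-theory to handle the resulting amalgamated product or HNN decomposition. The only organisational difference is that the paper inserts an extra reduction step, using the extension $1 \to F \to V \to S \to 1$ with $S \in \{\mathbb Z, D_\infty\}$ (via \cite[Lemma 2.9]{BLR2}) to reduce to these two concrete groups before writing down explicit pushout models for $E_{\phi^*(\fin)}K$, whereas you work directly with the type I/type II structure of $V$; both routes terminate in the same Ranicki and Cappell sequences.
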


\begin{proof}
By \cite[Theorem 2.4]{BLR2} this follows if we can show that every virtually cyclic group $V$ is contained in $\LFJfin$. From \cite[Lemma 2.5]{FJ} it follows that $V$ sits inside a short exact sequence
\[ 1 \lto F \lto V \lto S \lto 1 \]
in which $F$ is a finite group and $S$ is 
either infinite cyclic or infinite dihedral, i.e.\ isomorphic to $D_\infty \cong \Z/2 \ast \Z/2$.
By \cite[Lemma 2.9]{BLR2} and the fact that finite groups are clearly contained in $\LFJfin$, it hence suffices to prove that $\Z$ and $D_\infty$ are contained in $\LFJfin$.

To do so let $\phi \colon K \to S$ be a homomorphism with $S$ either $\mathbb Z$ or $D_\infty$. We need to show that $K$ satisfies the L-theoretic Farrell--Jones conjecture after inverting 2 with respect to the family 
$\phi^*(\fin)$. Factor $\phi$ as 
\[ K \stackrel{\psi}\lto \mathrm{Im}(\phi) \stackrel{i}{\lto} S\]
and observe that $\phi^*(\fin) = \psi^*(\fin)$. For the image of $\phi$ -- as for every subgroup of $D_\infty$ -- there are three possibilities: It is either finite, infinite cyclic or infinite dihedral. If the image of $\phi$ is finite, $\phi^*(\fin)$ is the family of all subgroups and thus $K$ clearly satisfies this isomorphism conjecture and otherwise we are reduced to considering a \emph{surjection} $\phi\colon K \rightarrow S$.
Notice that the space $E_\fin S$ acquires a $K$-action through the homomorphism $\phi$ and that $E_\fin S$ is a model for $E_{\phi^*(\fin)}K$ when $\phi$ is surjective. Let $K_0$ denote the kernel of $\phi$.

We will now proceed by studying the two cases separately. Let us first assume that $S = \mathbb Z$. As $\mathbb Z$ has no finite subgroups a model for the space $E_{\phi^*(\fin)} K$ is given by $E\Z$ and as a $K$-CW complex is given by the pushout
\[\xymatrix{ K/K_0 \times S^0 \ar[r] \ar[d] & K/K_0 \ar[d] \\ K/K_0 \times D^1 \ar[r] & E_{\phi^*(\fin)}K }\]
Consider then the diagram
\[\xymatrix{\dots \ar[r] & \mathrm{L}R^K_*(K/K_0) \ar[r] \ar[d] & \mathrm{L}R_*^K(K/K_0) \ar[r] \ar[d] & \mathrm{L}R^K_*(E_{\phi^*(\fin)}K) \ar[r] \ar[d] & \mathrm{L}R^K_{*-1}(K/K_0) \ar[r] \ar[d] & \dots \\
		\dots \ar[r] & \mathrm{L}_*(RK_0)[\tfrac{1}{2}] \ar[r] & \mathrm{L}_*(RK_0)[\tfrac{1}{2}] \ar[r] & \mathrm{L}_*(RK)[\tfrac{1}{2}] \ar[r] & \mathrm{L}_{*-1}(RK_0)[\tfrac{1}{2}] \ar[r] & \dots }\]
where the upper horizontal sequence is the exact sequence induced by applying $\mathrm L R^K_*(-)$ to the above pushout, the lower horizontal sequence is the exact sequence of \cite[page 413]{Ranicki3} and the vertical arrows are given by the assembly map. By definition of the equivariant homology theory $\mathrm{L}R^K_*(-)$, the assembly maps involving only $K/K_0$ become isomorphisms after inverting 2 in the domain. We deduce that the map
\[ \mathrm{L}R^K_*(E_{\phi^*(\fin)}K)[\tfrac{1}{2}] \lto \mathrm{L}_*(RK)[\tfrac{1}{2}] \]
is an isomorphism from the 5-lemma.

To address the case $S = D_\infty$ note that a model for $E_\fin(D_\infty)$ is given by the pushout of $D_\infty$-CW complexes
\[\xymatrix{ D_\infty \times S^0 \ar[r] \ar[d] & D_\infty/C_1 \amalg D_\infty/C_2 \ar[d] \\ D_\infty \times D^1 \ar[r] & E_\fin(D_\infty)}\]
where $C_1 = \Z/2 \ast \{ e \}$ and $C_2 = \{ e\} \ast \Z/2$ are the canonical subgroups. 
As explained above, it follows that a model for $E_{\phi^*(\fin)} K$ is given by the pushout of $K$-CW complexes
\[ \xymatrix{K/K_0 \times S^0 \ar[r] \ar[d] & K/K_1 \amalg K/K_2 \ar[d] \\ K/K_0 \times D^1 \ar[r] & E_{\phi^*(\fin)}K }\]
where $K_0 = \ker(\phi)$ and $K_i = \phi^{-1}(C_i)$ for $i = 1,2$. 
We consider the diagram
\[ \xymatrix@C=.5cm{\cdots \mathrm{L}R^K_*(K/K_0) \ar[r] \ar[d] & \mathrm{L}R^K(K/K_1) \oplus \mathrm{L}R^K_*(K/K_2) \ar[r] \ar[d] & \mathrm{L}R^K_*(E_{\phi^*(\fin)}K) \ar[r] \ar[d] & \mathrm{L}R^K_{*-1}(K/K_0) \ar[d] \cdots \\
		\cdots \mathrm{L}_*(RK_0)[\tfrac{1}{2}] \ar[r] & \mathrm{L}_*(RK_1)[\tfrac{1}{2}] \oplus \mathrm{L}_*(RK_2)[\tfrac{1}{2}] \ar[r] & \mathrm{L}_*(RK)[\tfrac{1}{2}] \ar[r] & \mathrm{L}_{*-1}(RK_0)[\tfrac{1}{2}]  \cdots }\]
where again the upper horizontal sequence is the exact sequence induced by applying $\mathrm{L}R^K_*(-)$ to the above pushout, whereas the lower horizontal sequence is the exact sequence of \cite[Corollary 6]{Cappell}. The vertical maps are again the assembly maps and thus isomorphisms at the terms involving only homogeneous spaces in the source. The $5$-lemma again finishes the proof.
\end{proof}

\begin{proof}[Proof of \cref{LFJ}]
Statement (i) is proven in \cite[Corollary 4.3]{BR}. In \cite[Lemma 2.12]{BLR2} it is shown that elementary amenable groups are contained in $\LFJfin$, even without inverting 2. By \cref{after inverting 2 in fibered case fin equals vc} we deduce (ii). Part (iii) is \cite[Lemma 2.5 \& Theorem 2.7]{BLR2} for subgroups and directed colimits. The closure property under amalgamated product follows from \cite[Corollary 6]{Cappell} using the fact that for a surjective group homomorphism $\phi \colon K \to G$ a decomposition $G = G_1 \ast_{G_0} G_2$ induces a decomposition $K = \phi^{-1}(G_1) \ast_{\phi^{-1}(G_0)} \phi^{-1}(G_2)$, similar to the argument in \cref{after inverting 2 in fibered case fin equals vc}. 
It hence remains to prove part (iv). This follows immediately from \cite[Lemma 2.9]{BLR2} using \cref{after inverting 2 in fibered case fin equals vc}.
\end{proof}

\begin{Prop}\label{pbc2}
Let $M$ be an aspherical manifold of dimension at least 4. If $\pi_1(M)$ is contained in $\LFJ$, then the {\pbc} with $\Z[\tfrac{1}{2}]$-coefficients holds for $M$. 
\end{Prop}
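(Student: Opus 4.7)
The plan is to revisit the argument of \cref{FJ vs BBC}, exploiting two simplifications allowed by the weaker conclusion sought here: since we only aim to identify the identity components of $\wt{\TOP}(M)$ and $\G(M)$, we never need to invoke the K-theoretic Farrell--Jones conjecture (which was used there to conclude $\G_s(M) = \G(M)$); and since we only claim the result after inverting $2$, we may freely pass between decorations of L-theory, the relevant Rothenberg comparison sequences being 2-torsion.

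Concretely, the \pbc after inverting $2$ is equivalent to asserting that $\pi_k(\wt{\TOP}(M)) \to \pi_k(\G(M))$ becomes an isomorphism after inverting $2$ for each $k \geq 1$, or, via the long exact sequence of the fibration $\G(M)/\wt{\TOP}(M) \to \B\wt{\TOP}(M) \to \B\G(M)$, that $\pi_k(\G(M)/\wt{\TOP}(M))[\tfrac{1}{2}] = 0$ for $k \geq 1$. Because $\G_s(M) \subset \G(M)$ is a collection of components containing the identity, this is equivalent to the same vanishing for $\pi_k(\G_s(M)/\wt{\TOP}(M))[\tfrac{1}{2}]$, and surgery theory identifies this with the structure set $\s^\TOP_\partial(M \times \Delta^k)[\tfrac{1}{2}]$. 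The only place where the dimension hypothesis enters is that this identification requires $\dim(M \times \Delta^k) \geq 5$, which is automatic for $d \geq 4$ and $k \geq 1$, exactly the range we need (the $k=0$ case, required for the full block Borel conjecture, is precisely what dimension $4$ forbids).

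The surgery exact sequence together with the cofibre sequence $\mathrm L^q\Z\langle 1\rangle \to \mathrm L^q\Z \to \mathrm H\Z$ and the vanishing of $H_n(M;\Z)$ for $n > d$ reduce the desired vanishing to the statement that the L-theoretic assembly
\[ \mathrm L^q\Z_*(M)[\tfrac{1}{2}] \lra \mathrm L^q_*(\Z\pi_1(M))[\tfrac{1}{2}] \]
is an isomorphism. The assumption $\pi_1(M) \in \LFJ$ now delivers this: since aspherical manifolds have torsion-free fundamental group and every torsion-free virtually cyclic group is infinite cyclic, the classical relative assembly argument (as in the discussion preceding \cref{status FJ conjecture}) collapses $E_\VC \pi_1(M)$ to $\B\pi_1(M) = M$, while after inverting $2$ one may replace the $q$-decoration by $\langle-\infty\rangle$ and specialise the fibered assertion at $\phi = \mathrm{id}$ to obtain the unfibered one. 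The main obstacle is purely the bookkeeping required for these decoration and family comparisons, all of which are standard and run entirely parallel to the treatment in the proof of \cref{FJ vs BBC}.
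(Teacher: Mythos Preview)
Your argument is correct and follows essentially the same route as the paper's own proof: reduce to the vanishing of $\s^\TOP_\partial(M \times \Delta^k)[\tfrac{1}{2}]$ for $k \geq 1$ via the surgery exact sequence, use that L-theory decorations coincide after inverting $2$ so that the $\LFJ$ assumption suffices, and then pass to identity components/universal covers. The one detail you leave implicit which the paper makes explicit is that $\pi_1(\G_s(M)/\wt{\TOP}(M))$ is abelian (by \cite[Theorem 18.5]{Ranicki}), so that localising at $2$ in degree $1$ is actually meaningful.
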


\begin{proof}
As explained previously, the difference between the decorations in L-theory disappears after inverting $2$ (by \cite[Remark 1.22]{LR}). From the identification of the surgery obstruction with the assembly map, as expounded in the proof of \cref{FJ vs BBC}, one therefore obtains that the groups $\s^\TOP_\partial(M \times \Delta^k)[\tfrac 1 2]$ vanish. For $k > 0$ this implies that $\pi_k(\G_s(M)/\TOPtw(M))[\tfrac{1}{2}] =0$ (the fundamental group is abelian by \cite[Theorem 18.5]{Ranicki}); note that no statement about the components can be deduced even if $M$ is of dimension greater that $4$. We conclude that the map $\B\TOPtw(M) \to \B\G_s(M)$ induces an isomorphism on $\pi_i(-)[\tfrac{1}{2}]$ for $i > 1$, and the kernel of the map on $\pi_1$ is abelian and annihilated by inverting 2. It follows that the homotopy fibre of $\iota_h \colon \B\TOPtw_h(M) \to \B\G_0(M)$ has abelian fundamental group and all homotopy groups annihilated by inverting 2. By considering its Postnikov tower we then see that it has the $\Z[\tfrac{1}{2}]$-homology of a point, and hence that $\iota_h$ is a $\Z[\tfrac{1}{2}]$-homology equivalence.
\end{proof}

\subsection{Block homeomorphisms of aspherical manifolds}\label{sec:BlockHomeos} 

The block Borel conjecture implies a strong computational result, namely a full understanding of the homotopy type of the space $\B\wt{\TOP}(M)$, as we will see in \cref{homotopy type of Block homeo}. This result, together with the fact that the tautological classes are defined in $H^*(\B\wt{\TOP}{\mathstrut}^{\>\!+}(M))$, as discussed in \cref{def kappa}, is key to our approach to understanding the tautological classes for aspherical manifolds.

It is straightforward to show (see e.g.\ \cite[section III]{Gottlieb}) that if $\Gamma$ is a group then there is a canonical fibre sequence
\begin{equation*}
 \B^2 \C(\Gamma) \lto \B\G(\B\Gamma) \lto \B\Out(\Gamma)
\end{equation*}
where $\C(\Gamma)$ denotes the centre of $\Gamma$ and $\Out(\Gamma)$ denotes the group of outer automorphisms of $\Gamma$. 
From this point onwards, we will let $\Gamma$ be the fundamental group of an aspherical manifold $M$. We can draw the following corollary.

\begin{Prop}\label{homotopy type of Block homeo}
Let $M$ be an aspherical manifold satisfying the block Borel conjecture. Then there is a fibre sequence
\[\B^2 \C(\Gamma) \lto \B \wt{\TOP}(M) \lto \B\Out(\Gamma).\]
\end{Prop}

Recall that we call an aspherical manifold $M$ \emph{centreless} if $\C(\Gamma) = 0$. We immediately obtain:

\begin{Cor}\label{centreless}
Let $M$ be a closed, connected, oriented, centreless, aspherical, manifold of dimension $d$, that satisfies the {\pbc} with $R$-coefficients. Then 
\[0 = \kappa_c(M) \in H^{k-d}(\BTOPtw_h(M);R)\]
for all $c \in H^k(\BSTOP \times K(\mathbb Z,d);R)$ with $k \neq d$.
\end{Cor}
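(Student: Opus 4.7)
The proof is essentially a matter of assembling three ingredients already established in the excerpt. The plan is as follows.

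First, by the identity block Borel conjecture, the canonical map
\[
\B\TOPtw_0(M) \lra \B\mathcal{G}_0(M)
\]
is a weak homotopy equivalence. So it suffices to show that the corresponding class vanishes in $H^{k-d}(\B\mathcal{G}_0(M);R)$ for $k\neq d$, and in turn that $\B\mathcal{G}_0(M)$ has vanishing reduced cohomology (with arbitrary coefficients). Concretely, note that $\kappa_c(M)$ is defined on $\B\TOPtw{\mathstrut}^{\>\!+}(M)$, and its restriction to $\B\TOPtw_0(M)$ pulls back under the above equivalence, so the identification is just a matter of unwinding \cref{def kappa}.

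Next, I would invoke \cref{homotopy type of G(X)}, specialised to $\Gamma = \pi_1(M)$, which gives the fibre sequence $\B^2 C(\Gamma) \to \B\mathcal{G}(\B\Gamma) \to \B\Out(\Gamma)$. Reading off homotopy groups (or using the explicit description recalled in the introduction), $\mathcal{G}_0(M)$ is an Eilenberg--MacLane space of type $K(C(\Gamma),1)$, since $M$ is aspherical, and therefore $\B\mathcal{G}_0(M)$ is an Eilenberg--MacLane space of type $K(C(\Gamma),2)$. The centreless hypothesis $C(\Gamma)=0$ then implies that $\B\mathcal{G}_0(M)$ is weakly contractible.

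Combining these two observations, $\B\TOPtw_0(M)$ is weakly contractible, so $H^{j}(\B\TOPtw_0(M);R)=0$ for every $j>0$, and is trivially zero for $j<0$. Setting $j = k-d$, we obtain the desired vanishing whenever $k \neq d$. There is really no hard step here; the point of the corollary is just to highlight what the two results already proved — the identification of block homeomorphisms with homotopy self-equivalences, and the computation of the latter for aspherical spaces — yield immediately about universal tautological classes.
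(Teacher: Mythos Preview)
Your argument is correct and matches the paper's own reasoning: the corollary is stated there with no proof beyond ``We immediately obtain'', since the preceding results (the identity block Borel conjecture together with \cref{homotopy type of G(X)}) yield $\B\TOPtw_0(M)\simeq \B\G_0(M)\simeq \B^2 C(\Gamma)\simeq *$ when $C(\Gamma)=0$, exactly as you describe.
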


One large class of examples of centreless aspherical manifolds is given by those admitting a metric of negative sectional curvature. Just as in \cite{BFJ} one can strengthen our results for such manifolds. To this end recall, that the fundamental group of a negatively curved manifold is \emph{hyperbolic}. 

\begin{Cor}\label{cohomology of block top}
Let $M$ be a closed, oriented, aspherical manifold of dimension $d \geq 4$ with hyperbolic fundamental group.
Then 
\[ H^*\big(\BTOPtw{\mathstrut}^{\>\!+}(M);\Q\big) = \Q.\]
In particular, 
\[0 = \kappa_c(M) \in H^{k-d}(\BTOPtw{\mathstrut}^{\>\!+}(M); \mathbb Q)\]
for all $c \in H^k(\BSTOP \times K(\mathbb Z,d);\mathbb Q)$ with $k \neq d$.
\end{Cor}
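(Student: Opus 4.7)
The plan is to show $H^i(\B\TOPtw{\mathstrut}^{\>\!+}(M);\Q) = 0$ for every $i \geq 1$; the ``in particular'' statement is then immediate, as $\kappa_c(M)$ lives in $H^{k-d}(\B\TOPtw{\mathstrut}^{\>\!+}(M);\Q)$, which vanishes trivially for $k < d$ and by the cohomology calculation for $k > d$. I would first collect algebraic data about $\Gamma := \pi_1(M)$. Being the fundamental group of an aspherical manifold, $\Gamma$ is torsion-free, and since $\dim M \geq 4$ it cannot be isomorphic to $\Z$ (otherwise $M$ would be homotopy equivalent to $S^1$, contradicting Poincar\'e duality). Hence Proposition \ref{trivial center} yields $\C(\Gamma) = 0$, while Theorem \ref{status FJ conjecture}(i) identifies $\Gamma$ as a Farrell--Jones group, and Gromov's theorem makes $\Out(\Gamma)$, and thus also $\Out^+(\Gamma)$, finite.

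For $d \geq 5$, Proposition \ref{FJ vs BBC} supplies the full block Borel conjecture, so that $\B\TOPtw{\mathstrut}^{\>\!+}(M) \simeq \B\G^+(M)$. An orientation-preserving analogue of Proposition \ref{homotopy type of G(X)}, obtained by pulling the fibration $\B^2\C(\Gamma) \to \B\G(M) \to \B\Out(\Gamma)$ back along $\B\Out^+(\Gamma) \to \B\Out(\Gamma)$, gives a fibration
\[ \B^2 \C(\Gamma) \lra \B\G^+(M) \lra \B\Out^+(\Gamma). \]
Triviality of $\C(\Gamma)$ makes the fibre contractible, so $\B\G^+(M) \simeq \B\Out^+(\Gamma)$, which has rational cohomology concentrated in degree $0$ since $\Out^+(\Gamma)$ is finite.

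The case $d = 4$ requires more care, as Proposition \ref{FJ vs BBC} only gives the {\pbc} together with injectivity of $\iota \colon \B\TOPtw(M) \to \B\G(M)$ on $\pi_1$. Combined with Proposition \ref{homotopy type of G(X)} and $\C(\Gamma) = 0$, the group $\G_0(M)$ is weakly contractible, so the {\pbc} forces $\TOPtw_0(M)$ to be weakly contractible too, and therefore $\pi_k \B\TOPtw{\mathstrut}^{\>\!+}(M) = 0$ for $k \geq 2$. The injectivity of $\iota$ on $\pi_1$ in turn embeds $\pi_0 \TOPtw{\mathstrut}^{\>\!+}(M)$ into $\pi_0 \G(M) = \Out(\Gamma)$, forcing it to be finite. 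Consequently $\B\TOPtw{\mathstrut}^{\>\!+}(M) \simeq K(\pi,1)$ for a finite group $\pi$, again with trivial rational cohomology in positive degrees.

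The main obstacle is this $d=4$ case: one must carefully combine the weaker 1-coequivalence statement in Proposition \ref{FJ vs BBC} with the structural information of Proposition \ref{homotopy type of G(X)} to control both the identity component and the component group of $\TOPtw{\mathstrut}^{\>\!+}(M)$ simultaneously. In higher dimensions the argument reduces directly to the fibre sequence computation and the algebraic facts assembled about $\Gamma$.
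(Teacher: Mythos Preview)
Your argument is correct and follows the same route as the paper: trivial centre via Proposition~\ref{trivial center}, finiteness of $\Out(\Gamma)$ via Gromov, and the Farrell--Jones input from Proposition~\ref{FJ vs BBC} reduce $\B\TOPtw{\mathstrut}^{\>\!+}(M)$ to the classifying space of a finite group. The paper leaves the proof implicit (``Combining this with Gromov's theorem\ldots we obtain''), whereas you spell out the passage to the oriented version and, more notably, separate the $d=4$ case and handle it using only the $1$-coequivalence part of Proposition~\ref{FJ vs BBC}; this extra care is appropriate, since the full block Borel conjecture is not asserted there in dimension~$4$.
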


\begin{proof}
Let us collect two relevant features of hyperbolic groups. 

Firstly, a torsion-free hyperbolic group different from $\Z$ has trivial centre. This is well-known, but as we had difficulties finding it in the literature we give the short proof. Suppose that the centre of such a group $\Gamma$ is non-trivial, and let $x \in \C(\Gamma)$ be a non-trivial element. By \cite[Corollary 3.10]{BH} we have that $\langle x \rangle$ has finite index in its centraliser, but since $x$ is central its centraliser is the whole of $\Gamma$, and so $\Gamma$ is virtually infinite cyclic. As mentioned earlier, it follows directly from \cite[Lemma 2.5]{FJ} that a torsion-free virtually cyclic group is in fact infinite cyclic.

Secondly, Gromov has shown \cite[Theorem 5.4.A]{Gromov} that a hyperbolic group which is the fundamental group of an aspherical manifold of dimension at least 3 has finite outer automorphism group.

We now begin the proof of this corollary. As $\pi_1(M)$ has trivial centre, the map $\G(M) \to \Out(\pi_1(M))$ is a homotopy equivalence, and by Gromov's theorem the latter is a finite group. By \cref{status FJ conjecture} (i) hyperbolic groups satisfy the Farrell--Jones conjecture, and hence by \cref{FJ vs BBC} the manifold $M$ satisfies the block Borel conjecture if $d \geq 5$, and the identity block Borel conjecture if $d=4$. But in the latter case the proof of \cref{FJ vs BBC} also shows that $\iota\colon \B\TOPtw(M) \to \B\G(M)$ is injective on $\pi_1$. In either case it follows that $\TOPtw(M)$ is homotopy equivalent to a finite group, so its classifying space has trivial rational cohomology.
\end{proof}

\begin{Rmk}\label{moge}
The assumption on the fundamental group of $M$ can of course be relaxed: the conclusion of the corollary holds whenever $M$ is a centreless manifold whose fundamental group is a Farrell--Jones group that has rationally acyclic, e.g.\ finite, outer automorphism group. In a similar vein, one can ask for $\Out(\pi_1(M))$ to have finite rational cohomological dimension. In this case one still obtains that the tautological classes of non-zero degree are nilpotent in $H^*\big(\BTOPtw{\mathstrut}^{\>\!+}(M); \mathbb Q\big)$, a claim we will make again later.
\end{Rmk}

\begin{Rmk}
The stronger statement explained in \cref{moge} recovers \cite[Theorem F]{BFJ} where such a vanishing is proven for smooth bundles with fibre a non-positively curved centreless manifold whose fundamental group has finite outer automorphism group: fundamental groups of non-positively curved manifolds are $\CAT(0)$ and thus Farrell--Jones groups. Notice that the contents of \cref{moge} for smooth bundles are also implied by \cite[Corollary G.1]{BFJ}.
\end{Rmk}

The opposite extreme to centreless aspherical manifolds are those with abelian fundamental groups, that is, tori $T^d$. In this case there are also special features which allow us to prove vanishing of tautological classes on $\BTOPtw{\mathstrut}^{\>\!+}(T^d)$ and not just $\BTOPtw{\mathstrut}_0(T^d)$. The following specialises to \cite[Corollary D.1]{BFJ} in the case of $\Q$-coefficients.

\begin{Cor}\label{npt}
For $d \geq 4$ we have
\[0 = \kappa_c(T^d) \in H^{k-d}(\BTOPtw{\mathstrut}^{\>\!+}(T^d); R)\]
for all $c \in H^k(\BSTOP \times K(\mathbb Z,d);R)$.
\end{Cor}

\begin{proof}
First we note that finitely generated free abelian groups are good in the sense of Freedman, see \cite[after Disk Theorem 1]{Freedman} and are Farrell--Jones groups by \cref{status FJ conjecture}. 
Thus by \cref{FJ vs BBC} the block Borel conjecture is valid for $T^d$. Hence by \cref{homotopy type of Block homeo} we have a fibre sequence 
\[\B^2 \mathbb Z^d \lto \BTOPtw{\mathstrut}^{\>\!+}(T^d) \lto \B\mathrm{SL}_d(\mathbb Z).\]
A simple calculation in the long exact sequence of homotopy groups associated to a fibration shows that the total space of the universal, oriented $T^d$-block bundle $\pi\colon \widetilde E^+(T^d) \rightarrow \BTOPtw{\mathstrut}^{\>\!+}(T^d)$ is given by $\B\mathrm{SL}_d(\mathbb Z)$. Moreover, the composite
\[\B\mathrm{SL}_d(\mathbb Z) \stackrel{\pi}\longrightarrow \BTOPtw{\mathstrut}^{\>\!+}(\B\Gamma) \longrightarrow \B\mathrm{SL}_d(\mathbb Z)\]
is a homotopy equivalence. It follows that the induced map 
\[\pi^*\colon H^*(\BTOPtw{\mathstrut}^{\>\!+}(T^d); R) \longrightarrow H^*(\widetilde E^+(T^d);R)\] 
is (split) surjective for all coefficients. But for all $x \in H^*(\BTOPtw{\mathstrut}^{\>\!+}(T^d); R)$ we have
\[\pi_!(\pi^*(x)) = x \smile \pi_!(1) = 0.\]
Since all tautological classes lie in the image of $\pi_!$ the claim follows.
\end{proof}

\begin{Rmk} 
For smooth fibre bundles and rational coefficients, the same vanishing result is also true in dimensions less than $4$:
When $d = 1,2$ the universal smooth $T^d$-bundle is given by $\mathrm{BSL_d}(\mathbb Z) \rightarrow \B(T^d \rtimes \mathrm{SL_d}(\mathbb Z))$ and the same proof applies. For $T^3$ the rational result follows from Ebert's work \cite{Ebert}, as mentioned earlier in \cref{vanishing low dimensions}.
\end{Rmk}

\section{Vanishing criteria for tautological classes of aspherical manifolds}\label{sec5}

In this section we shall introduce Burghelea's conjecture and mostly restrict to rational coefficients throughout. We will prove our main theorem from the introduction and discuss its integral refinement at the end of the section. For a group $\Gamma$, we denote by $\C(\Gamma)$ its centre, and for an element $g \in \Gamma$, we denote by $\C_\Gamma(g)$ its centraliser in $\Gamma$. Furthermore, $\cd_\Q$ denotes the rational cohomological dimension and $\cd_\Q^\tr$ denotes the rational cohomological dimension with trivial coefficients. We will say that a group $\Gamma$ of type $F$ is called an \emph{oriented rational Poincar\'e duality group} \cite[VIII, Section 10]{Brown} of formal dimension $d$, if $H^*(\Gamma;\Q\Gamma)$ is concentrated in degree $d$, and there is isomorphic to $\Q$ with trivial $\Gamma$ action. In this case, cap product with a generator of this group yields an isomorphism
\[H^k(\Gamma ; M) \lra H_{d-k}(\Gamma ; M)\]
for any $\Q\Gamma$-module $M$.

\subsection{Relating tautological classes to Burghelea's conjecture}\label{xyz}
The basic ingredient into our study of tautological classes for not necessarily centreless aspherical manifolds is the following lemma. 

\begin{Lemma}\label{blockfib}
For an aspherical manifold $M$ with fundamental group $\Gamma$, the universal $M$-fibration over $\B\G_0(M)$ is given by
\[\B\Gamma \lra \B(\Gamma/\C(\Gamma)) \stackrel{\pi}\lra \B^2\C(\Gamma),\]
where $\pi$ classifies the central extension
\[1 \lto \C(\Gamma) \lra \Gamma \lra \Gamma/\C(\Gamma) \lto 1.\]
\end{Lemma}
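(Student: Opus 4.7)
The plan is to compute the universal fibration in three pieces: the base, the total space, and the classifying map.

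For the base, I would identify $\B\G_0(M) \simeq \B^2 \C(\Gamma)$. The homotopy groups of $\G(X)$ for aspherical $X$ recalled in the introduction give $\pi_0 \G_0(M) = 0$, $\pi_1 \G_0(M) = \C(\Gamma)$, and $\pi_k \G_0(M) = 0$ for $k \geq 2$, so $\G_0(M) \simeq \B \C(\Gamma)$ and hence $\B\G_0(M) \simeq \B^2 \C(\Gamma)$.

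For the total space $E$ of the universal $M$-fibration $M \to E \to \B^2 \C(\Gamma)$, I would apply the long exact sequence of homotopy groups, obtaining $\pi_k E = 0$ for $k \geq 3$ (since the base is $2$-connected and the fibre aspherical) together with
\[0 \lto \pi_2 E \lto \C(\Gamma) \stackrel{\partial}{\lto} \Gamma \lto \pi_1 E \lto 0,\]
where $\partial$ is the connecting homomorphism. This $\partial$ agrees with the evaluation $\pi_1 \G_0(M) \to \pi_1 M$ at a basepoint, which by Gottlieb's theorem is the inclusion $\C(\Gamma) \hookrightarrow \Gamma$ for aspherical $M$. Thus $\pi_2 E = 0$ and $\pi_1 E \cong \Gamma/\C(\Gamma)$, and since $E$ is thereby aspherical, $E \simeq \B(\Gamma/\C(\Gamma))$.

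Finally, to match the classifying map $\pi$ with the one from the central extension, I would consider the fibration $\B\Gamma \to \B(\Gamma/\C(\Gamma)) \to \B^2 \C(\Gamma)$ induced by the extension. It is classified by a map $\varphi \colon \B^2 \C(\Gamma) \to \B\G(M)$ which factors (by connectivity of the source) through $\B\G_0(M) \simeq \B^2 \C(\Gamma)$. Both source and target are Eilenberg--Mac Lane spaces of type $K(\C(\Gamma), 2)$, so $\varphi$ is a homotopy equivalence if and only if it induces the identity on $\pi_2$. Naturality of the connecting homomorphism under pullback of fibrations expresses the connecting map of the extension fibration as the composition of $\varphi_*$ on $\pi_2$ with the connecting map of the universal fibration; both of the latter equal the inclusion $\C(\Gamma) \hookrightarrow \Gamma$ (by construction of the extension on one side, by Gottlieb on the other), and since the inclusion is injective, $\varphi_*$ must be the identity. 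The main subtlety I expect lies in identifying the connecting map of the universal fibration with the Gottlieb evaluation; while standard for monoid actions on spaces, this requires care with the chosen model of the universal fibration.
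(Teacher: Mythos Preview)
Your argument is correct. The paper actually gives no proof of this lemma at all: it is stated at the start of Section~\ref{xyz} and immediately used, with the implicit justification being Proposition~\ref{homotopy type of G(X)} (the fibre sequence $\B^2\C(\Gamma) \to \B\G(\B\Gamma) \to \B\Out(\Gamma)$, cited from Gottlieb) and the computation of $\pi_*\G(X)$ recalled in the introduction. So there is nothing to compare against; you have supplied exactly the kind of verification the paper omits.

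One small comment on presentation: your third step, pulling back the universal fibration along the classifying map of the extension fibration and checking $\varphi_*$ is the identity on $\pi_2$, is a slightly roundabout way to get the final identification. Once you know the base is $\B^2\C(\Gamma)$ and the total space is $\B(\Gamma/\C(\Gamma))$, the map $\pi$ between them is determined up to homotopy by its effect on $\pi_2$, and you have already computed that the connecting homomorphism is the inclusion $\C(\Gamma)\hookrightarrow\Gamma$; this forces $\pi$ to classify the extension. So steps two and three can be merged. Your caveat about identifying the connecting map with the Gottlieb evaluation is well-placed but not a genuine obstacle: for the Borel-construction model $E\G_0(M)\times_{\G_0(M)} M \to \B\G_0(M)$ this identification is the standard one for a topological monoid acting on a space.
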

\begin{proof}
The map $\B^2 \C(\Gamma) \to \B\G(\B\Gamma)$ from the beginning of \cref{sec:BlockHomeos} classifies a $\B\Gamma$-fibration $\B\Gamma \to E \to \B^2 \C(\Gamma)$. The connecting map $\partial : \C(\Gamma) = \pi_2(\B^2 \C(\Gamma)) \to \Gamma = \pi_1(\B\Gamma)$ in the long exact sequence on homotopy groups for this fibration is the inclusion map, and so $E \simeq \B(\Gamma/C(\Gamma))$ as required.
\end{proof}

If $M$ satisfies the {\pbc} with $R$-coefficients then there is a homotopy cartesian square
\[\xymatrix{
\widetilde E_h(M) \ar[d] \ar[r] & \B(\Gamma/C(\Gamma)) \ar[d]^-\pi\\
\BTOPtw_h(M) \ar[r]^{\iota_h}& \B^2 C(\Gamma)
}\]
where the left-hand map is the universal $M$-block bundle over $\BTOPtw_h(M)$ and the lower map is an $R$-homology equivalence. As $\B^2 C(\Gamma)$ is simply connected it follows that the top map is also an $R$-homology equivalence, so in order to show the vanishing of tautological classes it will therefore suffice to show that
\[\pi_!\colon H^k(\B(\Gamma/\C(\Gamma));R) \lra H^{k-d}(\B^2 \C(\Gamma);R)\]
is the zero map, which is precisely what we will do in this section for $R=\Q$.

To this end we will introduce various finiteness conditions and already want to offer following diagram to sum up the various implications among them:
\[\xymatrix{ &&\CBC \ar@{<=>}[rr]^{F_\mathbb Q} &\ & \IKP \ar@{<=>}[rr]^-{\text{Poincar\'e}} & \ & \IVP \ar@{=>}[d]_{\C \neq 0} & \text{$\kappa$-classes vanish}\\
            \BC \ar@{=>}[rru]&&\CP \ar@{=>}[rr]_{\C \neq 0} \ar@{=>}[rru] &\ & \KC \ar@{=>}[rr]_-{\text{Poincar\'e}} &\ & \GC \ar@{=>}[ur]_{\text{\;\;\; block Borel}} &  }\]
Here the fundamental group of an aspherical manifold lies in $\CBC$ if and only if it satisfies the central part of Burghelea's conjecture as stated in the introduction, thereby establishing our main theorem. The other terms are introduced throughout the section. To get started, let us axiomatise the conclusion we want to obtain.

\begin{Def}
Let $\GC$ (vanishing property) denote the class of oriented rational Poincar\'e duality groups of some dimension $d$ for which the Gysin map 
\[\pi_!\colon H^*(\B(\Gamma/\C(\Gamma));\Q) \lra H^{*-d}(\B^2\C(\Gamma);\Q)\] 
vanishes. Similarly, let $\IVP$ (individual vanishing property) consist of those oriented rational Poincar\'e duality groups for which 
\[\rho_!\colon H^*(\B(\Gamma/\langle g\rangle);\Q) \lra H^{*-d}(\B^2\Z;\Q)\] 
vanishes for each central $g \in \Gamma$ of infinite order individually; here $\rho \colon \B(\Gamma/\langle g\rangle) \to \B^2 \Z$ classifies the extension given by $g$.
\end{Def}

Assuming the {\pbc} with $\Q$-coefficients, $\pi_1(M)$ lying in  $\GC$ implies the vanishing of \emph{all} tautological classes in $H^*(\BTOPtw_h(M); \mathbb Q)$ for any oriented, aspherical manifold as we have explained above. We mainly introduce the class $\IVP$ to connect our conjecture from the introduction to Burghelea's, see below. To start this off we have the following.

\begin{Prop}\label{Oscar's trick}
A group $\Gamma$ in $\IVP$ lies in $\GC$ if and only if $\C(\Gamma) \otimes \Q \neq 0$.
\end{Prop}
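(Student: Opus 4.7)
The plan is to argue the two implications separately.

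For the forward direction I will argue the contrapositive: if $\C(\Gamma)\otimes \Q = 0$ then $\Gamma \notin \GC$. In that case $\C(\Gamma)$ is a torsion abelian group, so $H^*(\B^2\C(\Gamma);\Q)$ is concentrated in degree zero. The Serre spectral sequence of the universal fibration $\B\Gamma \to \B(\Gamma/\C(\Gamma)) \stackrel{\pi}\to \B^2\C(\Gamma)$ then collapses at the $E_2$-page, so the fundamental class of $\B\Gamma$ lifts to a class in $H^d(\B(\Gamma/\C(\Gamma));\Q)$ which the Gysin map sends to $1 \in H^0(\B^2\C(\Gamma);\Q) = \Q$ by integration over the fibre, witnessing that $\pi_! \neq 0$.

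For the converse I suppose $\Gamma \in \IVP$ and $\C(\Gamma)\otimes \Q \neq 0$. For each central $g \in \Gamma$ of infinite order, the inclusion $\langle g \rangle \hookrightarrow \C(\Gamma)$ induces a morphism of central extensions of $\Gamma$ with identity on the middle term, which gives a commuting square of oriented fibrations
\[\xymatrix{
\B(\Gamma/\langle g\rangle) \ar[r]^-{\B q_g} \ar[d]_-{\rho} & \B(\Gamma/\C(\Gamma)) \ar[d]^-{\pi} \\
\B^2\langle g\rangle \ar[r]^-{\B i_g} & \B^2 \C(\Gamma)
}\]
with common fibre $\B\Gamma$, on which the induced map between fibres is the identity. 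Naturality of the Gysin map then gives
\[ (\B i_g)^* \circ \pi_! \;=\; \rho_! \circ (\B q_g)^*,\]
and since $\rho_! = 0$ by the $\IVP$ hypothesis, the image of $\pi_!$ lies in the kernel of $(\B i_g)^*$ for every such $g$.

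The remaining step is to show that the intersection of these kernels in $H^*(\B^2\C(\Gamma);\Q)$ is trivial. Rationally, $H^*(\B^2\C(\Gamma);\Q) \cong \Sym(\C(\Gamma)\otimes\Q)$ with the generators in cohomological degree $2$, and $(\B i_g)^*$ is the induced ring map to $H^*(\B^2\Z;\Q) = \Q[u]$, which on the degree-$2$ summand is evaluation at the image of $g$ in $\C(\Gamma)\otimes\Q$. A homogeneous element $P \in \Sym^k(\C(\Gamma)\otimes\Q)$ therefore lies in the joint kernel if and only if, as a polynomial function, it vanishes on the image of every infinite-order element of $\C(\Gamma)$. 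Since any such $P$ depends on only finitely many coordinates, it is enough to check vanishing on a finite-dimensional subspace $\Q^r \subset \C(\Gamma)\otimes\Q$, where the image contains a full sublattice $\Z^r\setminus\{0\}$ and is thus Zariski-dense. Hence $P=0$ and so $\pi_! = 0$, i.e.\ $\Gamma \in \GC$.

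The main delicate point is verifying the naturality statement $(\B i_g)^* \circ \pi_! = \rho_! \circ (\B q_g)^*$ for the square above, which is not a homotopy pullback. What is needed is that the Gysin map is natural for any commutative square of oriented Poincar\'e fibrations on which the induced map between fibres has degree one. This should follow from the naturality of the Serre spectral sequence and the identification of $\pi_!$ with the composite $H^n(E) \twoheadrightarrow E_\infty^{n-d, d} \hookrightarrow H^{n-d}(B;\Q)$, whose two constituents are manifestly natural; the surjection onto $E_\infty^{n-d,d}$ uses the vanishing $E_2^{p,q}=0$ for $q>d$, which forces $F^p H^n(E) = H^n(E)$ for $p \le n-d$.
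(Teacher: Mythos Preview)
Your approach is essentially the same as the paper's: the forward direction is identical, and for the converse both arguments use the commuting square relating $\rho$ and $\pi$, naturality of the Gysin map, and the observation that a nonzero element of $H^*(\B^2\C(\Gamma);\Q)$ survives restriction along some $\B^2\Z \to \B^2\C(\Gamma)$ (you phrase this via Zariski density, the paper via a polynomial being nonzero on some line).

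The one point worth correcting is your assertion that the square is \emph{not} a homotopy pullback. In fact it is: the homotopy fibre of $\B^2\Z \to \B^2\C(\Gamma)$ is $\B(\C(\Gamma)/\langle g\rangle)$, and the homotopy fibre of $\B(\Gamma/\langle g\rangle) \to \B(\Gamma/\C(\Gamma))$ is also $\B(\C(\Gamma)/\langle g\rangle)$, with the comparison map an equivalence. Equivalently, pulling back the fibration $\B\Gamma \to \B(\Gamma/\C(\Gamma)) \to \B^2\C(\Gamma)$ along $\B^2 i$ gives a fibration over $\B^2\Z$ whose connecting map $\Z = \pi_2(\B^2\Z) \to \pi_1(\B\Gamma) = \Gamma$ is inclusion of $\langle g\rangle$, so the total space is $\B(\Gamma/\langle g\rangle)$. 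This is exactly how the paper proceeds, and it makes the Gysin naturality $(\B^2 i)^*\circ \pi_! = \rho_! \circ (\B q_g)^*$ standard, so your workaround via degree-one fibre maps and the uncertainty you express about it are unnecessary.

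A minor slip: $H^*(\B^2\C(\Gamma);\Q)$ is the symmetric algebra on $\Hom(\C(\Gamma),\Q)$, not on $\C(\Gamma)\otimes\Q$; your subsequent interpretation of elements as polynomial functions on $\C(\Gamma)\otimes\Q$ shows you meant the dual. The paper also notes that this vector space is finite-dimensional (bounded by $\cd_\Q(\Gamma)$), which makes the polynomial argument cleaner than your ``depends on finitely many coordinates'' reduction.
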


\begin{proof}
If $\C(\Gamma) \otimes \Q = 0$, the rational Gysin map is isomorphic to the Gysin map for the trivial fibration $\B\Gamma \to *$
which by Poincar\'e duality for $\Gamma$ is non-zero in degree $d$ and thus $\Gamma$ does not lie in $\GC$.

To prove the converse observe that $H^*(\B^2 \C(\Gamma); \Q)$ is the symmetric algebra on the finite dimensional graded vector space $\Hom(\C(\Gamma), \mathbb Q)[2]$: Its dimension equals the rational cohomological dimension of $\C(\Gamma)$, which is bounded by that of its ambient group $\Gamma$. Now suppose that $x \in H^k(\B(\Gamma/\C(\Gamma));\Q)$ has $\pi_!(x) \neq 0 \in H^{k-d}(\B^2C(\Gamma);\Q)$. Then we claim that there is an embedding $i\colon \Z \to \C(\Gamma)$ such that $(\B^2i)^*\pi_!(x) \neq 0 \in H^{k-d}(\B^2\Z;\Q)$. Assuming this claim for the moment, we consider the diagram
\[\xymatrix{\B(\Gamma / \Z) \ar[r]^-{\rho} \ar[d]_{\B(\Gamma/i)} & \B^2 \mathbb Z \ar[d]^{\B^2 i}\\
            \B(\Gamma / \C(\Gamma)) \ar[r]^-\pi                    & \B^2 \C(\Gamma)}\]
which exhibits $\B(\Gamma / \Z)$ as a homotopy pullback. Therefore the diagram
\[\xymatrix{H^*(\B(\Gamma / \C(\Gamma));\Q) \ar[r]^-{\pi_!}\ar[d]_{{\B(\Gamma/i)}^*} & H^{*-d}(\B^2 \C(\Gamma);\Q) \ar[d]^{(\B^2 i)^*}\\
            H^*(\B(\Gamma / \Z);\Q) \ar[r]^-{\rho_!}                         & H^{*-d}(\B^2 \Z;\Q)}\]
commutes, which implies $\rho_!(\B(\Gamma/i)^*(x)) = (\B^2i)^*\pi_!(x) \neq 0$,  a contradiction as $\rho_!$ is zero by assumption.

To prove the claim we consider a general non-zero element $y \neq 0 \in H^{2n}(\B^2C(\Gamma);\Q) = \Sym^n(\Hom(\C(\Gamma), \mathbb Q))$. Such a $y$ is a non-zero polynomial function on $\C(\Gamma) \otimes \Q$ so since $\C(\Gamma)\otimes \Q$ is non-zero, there must be some non-zero element $v \in \C(\Gamma)\otimes \Q$ on which $y$ does not vanish: as $y$ is homogeneous it does not vanish on the entire line spanned by $v$ except at the origin. Such a line contains the non-trivial image of an element $w \in \C(\Gamma)$, and the homomorphism $i\colon \mathbb Z \to \C(\Gamma)$ defined by $w$ has the desired properties, since $i^*$ precisely corresponds to restriction of functions to the line spanned by $v$. 
\end{proof}

Let us now recall Burghelea's conjecture in full, see \cite{Burghelea}. We first state its conclusion in an axiomatic way, since the known cases go beyond Burghelea's original conjecture.

\begin{Def}
Let $\BC$ (Burghelea property) denote the class of groups $\Gamma$ that satisfy the following: For any element $g \in \Gamma$ of infinite order we have that the limit of
\[\xymatrix{\dots \ar[r] & H_{*+4}(\C_\Gamma(g)/\langle g \rangle;\Q) \ar[r]^-{-\frown e}  & H_{*+2}(\C_\Gamma(g)/\langle g \rangle;\Q) \ar[r]^-{-\frown e}& H_*(\C_\Gamma(g)/\langle g \rangle;\Q) }\]
vanishes, where $e \in H^2(\C_\Gamma(g)/\langle g \rangle;\Q)$ is the Euler class of the central extension
\[1 \lra \mathbb Z \stackrel{g}{\lra} \C_\Gamma(g) \lra \C_\Gamma(g)/\langle g \rangle \lra 1.\]
Let furthermore $\CBC$ (central Burghelea property) denote the class of groups where the same conclusion need only hold for central elements. 
\end{Def}

\begin{conj}[Burghelea]
Any group of type $F$ is in $\BC$.
\end{conj}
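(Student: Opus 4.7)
The plan is to reduce Burghelea's conjecture to a finite cohomological dimension statement about the quotient and then attempt to establish that finiteness from the type $F$ hypothesis. First, I would observe that if the quotient $Q := \C_\Gamma(g)/\langle g\rangle$ has finite rational cohomological dimension with trivial coefficients, say $\cd_\Q^\tr(Q) = N$, then $H_k(Q;\Q) = 0$ for $k > N$, so the inverse system
\[ \cdots \xrightarrow{\cap e} H_{*+4}(Q;\Q) \xrightarrow{\cap e} H_{*+2}(Q;\Q) \xrightarrow{\cap e} H_*(Q;\Q) \]
is eventually zero for every fixed $*$, whence its limit vanishes trivially. This reduces the conjecture to showing that for $\Gamma$ of type $F$ and any $g \in \Gamma$ of infinite order, $\C_\Gamma(g)/\langle g\rangle$ has finite rational cohomological dimension---i.e.\ that $\Gamma$ lies in the class I expect the authors to call $\CP$.

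The second step is to set up the descent from $\Gamma$ to $Q$. Since $\Gamma$ admits a finite $K(\Gamma,1)$ we have $\cd_\Q(\Gamma) < \infty$, and this is inherited by the subgroup $\C_\Gamma(g)$. I would then analyse the Hochschild--Serre spectral sequence of the central extension
\[ 1 \lra \langle g\rangle \lra \C_\Gamma(g) \lra Q \lra 1, \]
or equivalently its Gysin sequence
\[ \cdots \lra H^{k-2}(Q;\Q) \xrightarrow{\cup e} H^k(Q;\Q) \lra H^k(\C_\Gamma(g);\Q) \lra H^{k-1}(Q;\Q) \lra \cdots. \]
Using that $H^k(\C_\Gamma(g);\Q) = 0$ for $k$ larger than $\cd_\Q(\C_\Gamma(g))$, the map $\cup e$ becomes surjective in high degrees; boundedness of $H^*(Q;\Q)$ in cohomological degree (which is what one needs for finite $\cd_\Q^\tr$) is then equivalent to showing that this eventual surjectivity of $\cup e$ is actually a vanishing statement in sufficiently high degree.

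The principal obstacle, and the reason the conjecture has resisted solution at this level of generality, is that finite rational cohomological dimension is not in general preserved under quotients by central infinite cyclic subgroups; the Gysin argument above only forces $\cup e$ to be surjective, not eventually zero, and the filtration in the spectral sequence a priori allows arbitrarily many non-trivial classes to persist. A natural geometric attempt would be to construct a bounded-dimensional model for $\B Q$ out of a finite $K(\Gamma,1)$ by quotienting an appropriate $\C_\Gamma(g)$-invariant contractible subspace by the free cocompact $\langle g\rangle$-action on a transverse $\R$-direction; this works whenever one has extra structure such as a $\mathrm{CAT}(0)$ action or a line bundle geometry on $\B\C_\Gamma(g)$, but producing such a direction from the raw type $F$ hypothesis is precisely where the conjecture sits. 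Accordingly, I would not expect a purely homological proof at this level of generality, and I suspect that any route forward must replace type $F$ with structural input---hyperbolicity, linearity, non-positive curvature, or hierarchical decomposition---sufficient to geometrically realise the required quotient as the fundamental group of a finite-dimensional space.
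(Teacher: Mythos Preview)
The statement you are attempting to prove is an \emph{open conjecture}; the paper does not prove it. It is stated as Burghelea's conjecture and the surrounding text explicitly says it ``is known to be true for several classes of groups and that, while some groups are known to lie outside of $\BC$, none of them are of type $F$.'' There is therefore no proof in the paper to compare your proposal against.

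Your proposal is not a proof either, and you recognise this yourself in the final paragraph. What you have written is a correct reduction followed by an accurate diagnosis of why that reduction does not close the gap. Your first step is valid: finiteness of $\cd_\Q^\tr(\C_\Gamma(g)/\langle g\rangle)$ certainly forces the inverse limit to vanish. Your Gysin-sequence analysis in the second step is also correct, and you correctly identify the obstruction: from $\cd_\Q(\C_\Gamma(g)) < \infty$ one only obtains that $\cup\, e$ is eventually an isomorphism in high degrees, not that the groups themselves vanish, so no bound on $\cd_\Q^\tr(Q)$ follows. This is exactly the point at which the conjecture is open.

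One small terminological correction: the class you describe (finite $\cd_\Q^\tr$ of $\C_\Gamma(g)/\langle g\rangle$ for \emph{every} infinite-order $g$) is not what the paper calls $\CP$. The paper's $\CP$ concerns only the full centre, requiring $\cd_\Q^\tr(\Gamma/\C(\Gamma)) < \infty$, and the paper's $\IKP$ concerns only \emph{central} elements individually. The condition you write down is a strengthening of $\BC$ that the paper does not name. This does not affect the substance of your analysis, but you should not attribute the name $\CP$ to it.
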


Recall that a group is said to be of type $F$ if there exists a model of its classifying space which is a finite complex, in particular the fundamental group of any aspherical manifold is of type $F$. Combining \cref{blubb} and \cref{Markus' trick} below shows that for group of type $F$ being in $\CBC$ is indeed equivalent to the central part of Burghelea's conjecture as stated in the introduction, justifying the name.

We will review known results about Burghelea's conjecture in the final chapter. For now let it suffice to say that it is known to be true for several classes of groups and that while some groups are known to lie outside of $\BC$ none of them are of type $F$. In order to connect Burghelea's conjecture to ours we need yet another definition.

\begin{Def}
Let $\KC$ (kernel property) denote the class of groups $\Gamma$, such that $\cd_\mathbb Q(\Gamma) < \infty$ and the map 
\[\pi^* \colon H^*(\B^2\C(\Gamma);\Q) \lra H^*(\B(\Gamma/\C(\Gamma));\Q)\] 
is not injective.

Similarly, let $\IKP$ (individual kernel property) denote those $\Gamma$ with $\cd_\Q(\Gamma) < \infty$, such that for each central $g \in \Gamma$ of infinite order the induced map 
\[\rho^* \colon H^*(\B^2\Z;\Q) \lra H^*(\B(\Gamma/\mathcal \langle g\rangle);\Q)\] 
is not injective. 
\end{Def}

Equivalently, one can describe groups in $\IKP$ by requiring the rational Euler class $e \in H^2(\B(\Gamma/\mathcal \langle g\rangle);\Q)$ of the central extensions $1 \to \Z \overset{g}\to \Gamma \to \Gamma/\langle g\rangle \to 1$ to be nilpotent.
\
\begin{Rmk}\label{blubb}
It follows from the Gysin sequence that multiplication with the Euler class 
\[e \smile - \colon H^*(\Gamma/\langle g \rangle;\Q) \lra H^{*+2}(\Gamma/\langle g \rangle;\Q)\]
 is an isomorphism in degrees greater than the cohomological dimension of $\Gamma$, so if $\cd_\mathbb Q(\Gamma) < \infty$ and the Euler class is nilpotent then we must have $\cd_\Q^\tr(\Gamma/\langle g \rangle) < \infty$. This means that $\Gamma \in \IKP$ if and only if $\cd_\Q(\Gamma)$ and $\cd_\Q^\tr(\Gamma/\langle g \rangle)$ are finite for each central $g \in \Gamma$ of infinite order. In particular, it shows that if $\Gamma \in \IKP$ then $\Gamma \in \CBC$.
\end{Rmk}

\begin{Prop}\label{Markus' trick}
Let $\Gamma \in \CBC$ be a group with $\cd_\mathbb Q(\Gamma) < \infty$, whose rational homology is of finite type. Then $\Gamma \in \IKP$.
\end{Prop}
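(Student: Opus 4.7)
The plan is as follows: by \cref{blubb}, it suffices to verify that $\cd_\Q^{\tr}(\Gamma/\langle g \rangle) < \infty$ for every central element $g \in \Gamma$ of infinite order. Since $g$ is central, $\C_\Gamma(g) = \Gamma$, and the central Burghelea property specialises to the assertion that for every integer $n$ the inverse system
\[ \cdots \lra H_{n+4}(\Gamma/\langle g \rangle; \Q) \xrightarrow{-\cap e} H_{n+2}(\Gamma/\langle g \rangle; \Q) \xrightarrow{-\cap e} H_n(\Gamma/\langle g \rangle; \Q) \]
has vanishing inverse limit, where $e \in H^2(\Gamma/\langle g \rangle; \Q)$ is the Euler class of the central extension $1 \to \Z \xrightarrow{g} \Gamma \to \Gamma/\langle g \rangle \to 1$.

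I would then bring in the homological Gysin sequence of the associated principal $S^1$-fibration $\B\Z \simeq S^1 \to \B\Gamma \to \B(\Gamma/\langle g \rangle)$, namely
\[ \cdots \lra H_q(\Gamma; \Q) \lra H_q(\Gamma/\langle g \rangle; \Q) \xrightarrow{\cap e} H_{q-2}(\Gamma/\langle g \rangle; \Q) \lra H_{q-1}(\Gamma; \Q) \lra \cdots, \]
in which the connecting differential is cap product with precisely the Euler class $e$ appearing above. Since $\cd_\Q(\Gamma) < \infty$, the outer groups vanish as soon as $q - 1 > \cd_\Q(\Gamma)$, so $\cap e$ is an isomorphism for all $q \geq \cd_\Q(\Gamma) + 2$.

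Combining the two inputs, fix any $n \geq \cd_\Q(\Gamma)$: all transition maps in the tower
$\cdots \to H_{n+4}(\Gamma/\langle g \rangle; \Q) \to H_{n+2}(\Gamma/\langle g \rangle; \Q) \to H_n(\Gamma/\langle g \rangle; \Q)$
are then isomorphisms, so its inverse limit is canonically isomorphic to $H_n(\Gamma/\langle g \rangle; \Q)$ itself. The central Burghelea property hence forces $H_n(\Gamma/\langle g \rangle; \Q) = 0$ for every such $n$. Finally, since $\Q$ is a field, the universal coefficient theorem identifies $H^n(\Gamma/\langle g \rangle; \Q)$ with $\Hom_\Q(H_n(\Gamma/\langle g \rangle; \Q), \Q)$, so $H^n(\Gamma/\langle g \rangle; \Q)$ also vanishes for all $n \geq \cd_\Q(\Gamma)$, giving $\cd_\Q^{\tr}(\Gamma/\langle g \rangle) < \infty$ as required.

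The only point demanding mild care is the clean identification of the inverse limit of an eventually-isomorphic tower with its stable value; no deeper obstacle is visible. The finite type assumption on $H_*(\Gamma;\Q)$ is not strictly needed for the argument above, but is natural as it ensures that each term in the inverse system is finite dimensional over $\Q$ (transferred from $\Gamma$ to $\Gamma/\langle g \rangle$ by the same Gysin sequence).
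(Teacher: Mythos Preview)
Your argument is correct, and indeed it is a genuinely different route from the paper's. The paper dualises: it computes
\[
\bigl(\colim_{-\cup e} H^{*}(\Gamma/\langle g\rangle;\Q)\bigr)^{*} \cong \lim_{-\cup e} \bigl(H^{*}(\Gamma/\langle g\rangle;\Q)\bigr)^{*} \cong \lim_{-\cap e} H_{*}(\Gamma/\langle g\rangle;\Q) = 0,
\]
and then reads off that $e$ is nilpotent from the vanishing of the localisation $H^{2*}(\Gamma/\langle g\rangle;\Q)[1/e]$. The second isomorphism is where the finite type hypothesis enters: it is needed to identify the dual of cohomology with homology degreewise for $\Gamma/\langle g\rangle$ (the finite type of $H_*(\Gamma;\Q)$ propagates to $\Gamma/\langle g\rangle$ via the very Gysin sequence you use). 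Your approach sidesteps this entirely by staying in homology: the Gysin sequence forces the tower to consist of isomorphisms above $\cd_\Q(\Gamma)$, so the vanishing inverse limit from $\CBC$ kills $H_n(\Gamma/\langle g\rangle;\Q)$ outright for $n \geq \cd_\Q(\Gamma)$, and a single application of the universal coefficient theorem (which needs no finiteness) finishes. In particular your argument shows that the finite type hypothesis in the statement is superfluous, which is a small but genuine strengthening; the paper's proof, by contrast, cannot do without it.
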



\begin{proof}
Let $g \in \Gamma$ be central of infinite order. As the rational homology of $\Gamma$ is of finite type, so is that of $\Gamma/ \langle g \rangle$ by the Serre spectral sequence for $\B\Gamma \rightarrow \B\Gamma/\langle g \rangle \rightarrow \B^2\mathbb Z$. Since $\Gamma \in \CBC$ we have that $\lim\limits_{-\frown e} H_*(\Gamma / \langle g \rangle;\Q) = 0$, so we compute
\begin{align*}
\big(\colim_{- \smile e} H^*(\Gamma / \langle g \rangle;\Q)\big)^* &\cong \lim_{- \smile e} \big(H^*(\Gamma / \langle g \rangle;\Q)\big)^* \\
                                                                         &\cong \lim_{- \frown e} H_*(\Gamma / \langle g \rangle;\Q) \\
                                                                         & = 0
\end{align*}
where the second isomorphism uses that $H_*(\Gamma/ \langle g \rangle;\Q)$ has finite type. Restricting to even degrees we find that 
\[0 = \colim\limits_{- \smile e} H^{2*}(\Gamma / \langle g \rangle;\Q) \cong H^{2*}(\Gamma / \langle g \rangle;\Q)\left[\tfrac{1}{e}\right]\]
which implies that $e \in H^2(\Gamma / \langle g \rangle;\Q)$ is nilpotent, and so $\rho^* \colon H^*(\B^2\Z;\Q) \to H^*(\B(\Gamma/\mathcal \langle g\rangle);\Q)$ is not injective, as required.
\end{proof}

\begin{Prop}\label{prop:IKPimpliesIVP}
If an oriented rational Poincar\'e duality group lies in $\IKP$ then it lies in $\IVP$. The same statement holds for $\KC$ and $\GC$.
\end{Prop}
\begin{proof}
Suppose that $\Gamma \in (\mathcal{I})\KC$. Let $C$ denote either $\langle g \rangle$ for $g$ a central element of infinite order (in the case of $\IKP$) or the entire centre of $\Gamma$ (in the case of $\KC$; the centre must be non-trivial if $\Gamma \in \KC$).

Let $0 \neq x \in H^*(\B^2 C;\Q)$ be such that $\pi^*(x) = 0$, and $y \in H^*(\B(\Gamma/C);\Q)$ be arbitrary. Then by the projection formula we have 
\[ 0 = \pi_!(\pi^*(x) \smile y) = x \smile \pi_!(y) \in H^*(\B^2 C;\Q).\]
Since this ring is a domain it follows that $\pi_!(y) = 0$ and since $y$ is arbitrary we have $\pi_! = 0$, so that $\Gamma \in (\mathcal{I})\GC$.
\end{proof}

The following proposition is a converse to the first case in the above proposition. We have not been able to prove the converse to the second case. However, we do not use either result, and include it only for the sake of interest.

\begin{Prop}\label{prop:IVPimpliesIKP}
If a group lies in $\IVP$ then it lies in $\IKP$. 
\end{Prop}
\begin{proof}
We will show the contrapositive, so suppose $\Gamma$ does not lie in $\IKP$. Then it has a central element $g$ of infinite order such that in the corresponding fibration
$$\B\Gamma \lto \B(\Gamma/\langle g \rangle) \overset{\rho}\lto \B^2 \Z $$
the map $\rho^*$ is injective. Choose an $n$ such that $2n \gg d$, and consider restricting the fibration $\rho$ to the skeleton $\mathbb{CP}^n \subset \mathbb{CP}^\infty = \B^2 \Z$, to give a fibration
$$\B\Gamma \lto E \overset{\nu}\lto \mathbb{CP}^n.$$
By considering the induced map of Serre spectral sequences, we see that $\nu^*$ is also injective. 

In the homological Serre spectral sequence for $\nu$, the product of fundamental classes $[\mathbb{CP}^n] \otimes [\B\Gamma] \in H_{2n}(\mathbb{CP}^n;\Q) \otimes H_d(\B\Gamma;\Q) = E^2_{2n,d}$ is a permanent cycle, as there is no space for differentials. By Poincar{\'e} duality on $\mathbb{CP}^n$ and $\B\Gamma$, capping with this class gives an isomorphism of spectral sequences
$$- \frown [\mathbb{CP}^n] \otimes [\B\Gamma] : E^{p,q}_r \lto E^r_{2n-p, d-q}$$
between the cohomological and homological Serre spectral sequences.

Now as $\nu^*$ is injective, $\nu_*$ is surjective: in other words, there are no differentials exiting the bottom row of the homological Serre spectral sequence, in particular exiting the group $E^r_{2n, 0}$. But then by the duality isomorphism there are no differentials exiting the group $E_r^{0, d}$, and hence $\nu_! : H^d(E;\Q) \to H^0(\mathbb{CP}^n; \Q)$ is onto. Choose $u \in H^d(E;\Q)$ such that $\nu_!(u)=1$.

As $E \to \B(\Gamma/\langle g \rangle)$ is $2n$-connected, and $2n \gg d$, the class $u$ extends to a class of the same name in $H^d(\B(\Gamma/\langle g \rangle);\Q)$, which then satisfies $\rho_!(u)=1$. Thus $\Gamma$ is not in $\IVP$.
\end{proof}

Putting together \cref{Markus' trick} and \cref{prop:IKPimpliesIVP} we find:

\begin{Cor}\label{mthm}
Let $M$ be an oriented aspherical manifold with fundamental group $\Gamma$. If $\Gamma$ has non-trivial centre and satisfies the Burghelea conjecture and the {\pbc} with $\Q$-coefficients then 
\[0 = \kappa_c(M) \in H^*(\BTOPtw_h(M);\Q)\]
for all $c \in H^*(\BSTOP \times K(\Z,d);\Q)$.
\end{Cor}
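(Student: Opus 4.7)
The plan is to chain together the implications summarised in the diagram of this section, namely
\[\CBC \Longrightarrow \IKP \Longleftrightarrow \IVP \Longrightarrow \GC \Longrightarrow (\kappa\text{-classes vanish}),\]
applying each of \cref{Markus' trick}, \cref{Idontliketricks}, \cref{Oscar's trick}, and \cref{blockfib} in turn, and verifying that the hypotheses on $M$ meet the input of each step.

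First I would record the finiteness features of $\Gamma := \pi_1(M)$. Since $M$ is a closed aspherical manifold, $\Gamma$ is torsion-free and of type $F$, with $M$ serving as a finite model for $\B\Gamma$; hence $\cd_\Q(\Gamma) \leq d := \dim M$, the rational homology $H_*(\B\Gamma;\Q)$ is of finite type, and Poincaré duality for $M$ makes $\Gamma$ into an oriented rational Poincaré duality group of formal dimension $d$. With these facts in hand, \cref{Markus' trick} converts the central Burghelea hypothesis $\Gamma \in \CBC$ into $\Gamma \in \IKP$, and \cref{Idontliketricks} upgrades this to $\Gamma \in \IVP$.

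Next I would exploit the assumption $\C(\Gamma) \neq 0$. Since $\Gamma$ is torsion-free so is its centre, whence $\C(\Gamma)\otimes\Q \neq 0$; \cref{Oscar's trick} therefore yields $\Gamma \in \GC$, which by definition means that the Gysin map
\[\pi_!\colon H^*(\B(\Gamma/\C(\Gamma));\Q) \lra H^{*-d}(\B^2\C(\Gamma);\Q)\]
of the central extension vanishes in every degree. To finish, I would invoke the {\pbc} to identify $\BTOPtw_0(M) \simeq \B\G_0(M)$, so that by \cref{blockfib} the universal $M$-block bundle over $\BTOPtw_0(M)$ has the above fibration as its underlying fibration. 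By \cref{def kappa} each $\kappa_c(M)$ is produced by applying $\pi_!$ to a class pulled back along $(T_v^s,\e)$, so the vanishing of $\pi_!$ immediately forces $\kappa_c(M) = 0$ for every $c$.

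I do not foresee a genuine obstacle here: the corollary is essentially bookkeeping, assembling the machinery already developed in the section. The only things to check are that the hypotheses of \cref{Markus' trick} and \cref{Oscar's trick} are in force, and both reduce to the standard fact that a closed aspherical manifold provides a finite $K(\Gamma,1)$ with torsion-free fundamental group.
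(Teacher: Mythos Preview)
Your proposal is correct and follows exactly the chain of implications the paper sets up in the diagram at the start of \cref{xyz}; the paper does not spell out a separate proof for this corollary, treating it as an immediate consequence of \cref{Markus' trick}, \cref{Idontliketricks}, \cref{Oscar's trick}, and \cref{blockfib} in precisely the way you describe.
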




The final condition we will discuss is as follows.

\begin{Def}
Let $\CP$ denote the class of groups $\Gamma$ with $\cd_\mathbb Q(\Gamma) < \infty$ and $\cd_\mathbb Q^{\tr}(\Gamma/\C(\Gamma)) < \infty$.
\end{Def}

\begin{Prop}\label{CP implies IKP}
If $\Gamma \in \CP$ then $\Gamma \in \IKP$, and if in addition $C(\Gamma) \otimes \Q \neq 0$ then $\Gamma \in \KC$.
\end{Prop}
\begin{proof}
Suppose that $\Gamma \in \CP$ and $g \in \Gamma$ is a central element of infinite order, with corresponding fibration
$$\B\Gamma \lto \B(\Gamma/\langle g \rangle) \overset{\rho}\lto \B^2 \Z.$$
Writing $H^*(\B^2\Z;\Q)=\Q[\iota_2]$, the class $\rho^*(\iota_2)$ is by definition the
Euler class of the central extension
\[1 \longrightarrow \mathbb Z \stackrel{g}{\longrightarrow} \Gamma \longrightarrow \Gamma/\langle g \rangle \longrightarrow 1,\]
so the non-injectivity of $\rho^*$ is equivalent to the nilpotence of this Euler class. 

As $\Gamma$ has finite $\Q$-cohomological dimension so does any subgroup, in particular $C(\Gamma)$. Now $\cd^{\tr}_\Q(A) = \mathrm{rk}(A)$ for any abelian group $A$ 
and therefore $\cd^\tr_\Q(\C(\Gamma)/\langle g \rangle) \leq \cd_\Q^{\tr}(\C(\Gamma))$. 
We also have $\cd_\mathbb Q^{\tr}(\Gamma/\C(\Gamma)) < \infty$, so the Serre spectral sequence for the central extension
$$1 \lra C(\Gamma)/\langle g \rangle \lra \Gamma/\langle g \rangle \lra \Gamma/C(\Gamma) \lra 1$$
has finitely many non-zero rows and columns, and hence $\cd_\mathbb Q^{\tr}(\Gamma/\langle g \rangle) < \infty$. But then the Euler class $e \in H^2(\Gamma/\langle g \rangle;\Q)$ must be nilpotent.

For the final statement, if $C(\Gamma) \otimes \Q \neq 0$ then ${H}^{2*}(\B^2 C(\Gamma);\Q) = \mathrm{Sym}^{*}_\Q(\mathrm{Hom}(C(\Gamma),\Q))$ contains non-nilpotent elements of strictly positive degree, so as $\cd_\mathbb Q^{\tr}(\Gamma/\C(\Gamma)) < \infty$ it follows that the map $\pi^* \colon H^*(\B^2\C(\Gamma);\Q) \to H^*(\B(\Gamma/\C(\Gamma));\Q)$ cannot be injective.
\end{proof}

We do not know of an aspherical manifold whose fundamental group is not contained in $\CP$ (we pose as a question in \cref{questions} whether this is generally the case). One might in fact guess that $\B(\Gamma/\C(\Gamma))$ is a Poincar\'e complex whenever $\Gamma$ is the fundamental group of an aspherical manifold. 
This is true if both $\C(\Gamma)$ and $\Gamma/\C(\Gamma)$ are of type $F$, by the 2-out-of-3 property for Poincar\'e spaces for fibrations of \emph{finite} complexes \cite{Gottlieb2}. But the group $\Gamma/\C(\Gamma)$ is not even torsion-free in general: For example for $M$ one of the manifolds constructed in \cite{CWY} as counterexamples to a conjecture about free $S^1$-actions on aspherical manifolds with non-trivial centre, $\Gamma/\C(\Gamma)$ contains a non-trivial element of order $2$, and thus cannot even admit a finite dimensional model of its classifying space. 


Finally, we have the following convenient criterion for being in $\KC$:
\begin{Lemma}\label{lemma vanishing}
Let $\Gamma$ be a group with $\cd_\Q(\Gamma) < \infty$. If the map $\C(\Gamma) \to \Gamma^\ab\otimes\Q$ is non-trivial, then
\[\pi^* \colon H^2(\B^2\C(\Gamma);\Q) \lto H^2(\B(\Gamma/\C(\Gamma));\Q)\] 
is not injective, in particular $\Gamma \in \KC$.
\end{Lemma}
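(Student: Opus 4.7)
The plan is to reinterpret $\pi^*$ in degree $2$ via the classifying-space description of the central extension
\[1 \to \C(\Gamma) \to \Gamma \to \Gamma/\C(\Gamma) \to 1\]
that $\pi$ classifies. Via Hurewicz and universal coefficients one identifies $H^2(\B^2 \C(\Gamma);\Q) \cong \Hom(\C(\Gamma),\Q)$, and the identity in $\Hom(\C(\Gamma),\C(\Gamma)) = H^2(\B^2\C(\Gamma);\C(\Gamma))$ pulls back along $\pi$ to the extension class $k \in H^2(\Gamma/\C(\Gamma);\C(\Gamma))$. By naturality, any $\phi \in \Hom(\C(\Gamma),\Q)$ satisfies $\pi^*(\phi) = \phi_*(k)$, which classifies the pushout central extension
\[1 \to \Q \to \Gamma_\phi \to \Gamma/\C(\Gamma) \to 1.\]
As $\Q$ is abelian, this class vanishes precisely when the extension splits, and by the universal property of the pushout such splittings correspond bijectively to group homomorphisms $\tilde\phi \colon \Gamma \to \Q$ restricting to $\phi$ on $\C(\Gamma)$. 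Thus $\pi^*(\phi)=0$ if and only if $\phi$ lies in the image of the restriction $\Hom(\Gamma,\Q) \to \Hom(\C(\Gamma),\Q)$.

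It therefore suffices to exhibit a nonzero character $\phi$ of $\C(\Gamma)$ that extends to $\Gamma$. The hypothesis that the composition $\C(\Gamma) \to \Gamma^{\ab} \otimes \Q$ is nonzero supplies a vector $v$ in its image, and one chooses a $\Q$-linear functional $\psi \colon \Gamma^{\ab} \otimes \Q \to \Q$ with $\psi(v) \neq 0$. The composite $\tilde\phi \colon \Gamma \to \Gamma^{\ab} \otimes \Q \xrightarrow{\psi} \Q$ then restricts on $\C(\Gamma)$ to a nonzero $\phi$, which by the previous paragraph lies in the kernel of $\pi^*$. The finiteness hypothesis $\cd_\Q(\Gamma) < \infty$ plays no role in this non-injectivity argument; it enters only to satisfy the finiteness clause in the definition of $\KC$. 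The single subtlety worth checking is the identification of $\pi^*$ with the pushout-of-extensions map, but this follows immediately from standard facts (Hurewicz, the classification of central extensions by $H^2$, and the universal property of the pushout), so I foresee no genuine obstacle.
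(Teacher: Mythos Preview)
Your proof is correct. Both your argument and the paper's ultimately rest on the five-term exact sequence for the central extension $1 \to \C(\Gamma) \to \Gamma \to \Gamma/\C(\Gamma) \to 1$, but the presentations differ genuinely.

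The paper argues by contradiction via a dimension count in the Serre spectral sequence of $\B\C(\Gamma) \to \B\Gamma \xrightarrow{q} \B(\Gamma/\C(\Gamma))$: assuming $\pi^*$ injective, one finds that $\ker(q^*)$ in degree $2$ is at least $r$-dimensional (where $r = \rank \C(\Gamma)$), while $\ker(q^*) = \im(d_2)$ with $d_2$ having an $r$-dimensional source; this forces $d_2$ injective, whence the restriction $H^1(\Gamma;\Q) \to H^1(\C(\Gamma);\Q)$ vanishes, contradicting the hypothesis after dualising. Your argument instead identifies $\pi^*$ directly: under $H^2(\B^2\C(\Gamma);\Q) \cong \Hom(\C(\Gamma),\Q)$, the map $\pi^*$ sends $\phi$ to the class of the pushout extension, so $\ker(\pi^*)$ is exactly the image of the restriction $\Hom(\Gamma,\Q) \to \Hom(\C(\Gamma),\Q)$, and the hypothesis hands you a nonzero element of this image.

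Your route is more direct: it avoids the contradiction setup, bypasses the spectral-sequence formalism, and does not rely on $r < \infty$ for the non-injectivity claim (the paper's dimension count does use this, though it follows from $\cd_\Q(\Gamma) < \infty$). The paper's approach has the minor advantage of making the connection with the differential $d_2$ explicit, which fits the ambient spectral-sequence language used elsewhere in the section. One small remark: your ``bijective correspondence'' between splittings and extensions $\tilde\phi$ is slightly more than you need (existence of one from the other suffices), but it is correct and standard.
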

\begin{proof}
Suppose that $\pi^*$ is injective and consider the Serre spectral sequence for the fibration
\[ \B\C(\Gamma) \lto \B \Gamma \stackrel{q}{\lto} \B(\Gamma/\C(\Gamma)).\]
	Let $r$ be the rank of $\C(\Gamma)$, which is bounded by $\cd_\Q(\Gamma)$. Clearly the image of $\pi^*$ is contained in $\ker(q^*)$, which therefore by assumption contains an $r$-dimensional subspace. On the other hand $\ker(q^*)$ is the image of the differential 
\[ d_2 \colon H^1(\B\C(\Gamma);\Q) \lto H^2(\B(\Gamma/\C(\Gamma));\Q).\]
Since $\dim_\Q(H^1(\B\C(\Gamma);\Q)) = r$ it follows that this differential is an isomorphism. Therefore $H^1(\B\Gamma;\Q) \to H^1(\B(\C(\Gamma));\Q)$ is the zero map and the lemma follows by dualising.
\end{proof}

\subsection{Integral results}\label{intres}

Many of the above results are true integrally under an additional assumption, namely that $C(\Gamma)$ is finitely generated. It does not seem to be known whether this holds when $\Gamma$ is the fundamental group of an aspherical manifold (we pose this as a question in \cref{questions}).

\begin{Thm}\label{integral result}
If $M$ is an oriented aspherical manifold, which satisfies the Burghelea conjecture and the {\pbc} with $\Z$-coefficients and whose centre is non-trivial and finitely generated, then
\[0 = \kappa_c(M) \in H^*(\BTOPtw_h(M); \mathbb Z)\]
for all $c \in H^*(\BSTOP \times K(\Z,d); \mathbb Z)$.
\end{Thm}
\begin{proof}
The group $\Gamma = \pi_1(M)$ has type $F$ and lies in $\CBC$ and so by \cref{Markus' trick} it lies in $\IKP$. 
Thus the homomorphism
\[\rho(g)^*\colon H^*(\B^2\mathbb Z; \mathbb Z) \stackrel{}\longrightarrow H^*(\B(\Gamma/\langle g \rangle); \mathbb Z)\]
on integral cohomology has non-trivial kernel: For both sides the rational cohomology is the rationalisation of the integral cohomology, since all the homology groups are finitely generated (for the left hand side this is immediate and for the right hand side it follows from the Serre spectral sequence and the fact that $\Gamma$ has type $F$, so $F_\infty$). As $\Gamma $ lies in $\IKP$ there is an element $0 \neq x \in H^*(\B^2 \mathbb Z; \mathbb Z)$ such that $\pi^*(x)$ is torsion in $H^*(\B(\Gamma/\langle g \rangle); \mathbb Z)$, whence an appropriate multiple of $x$ gives a non-zero element in the kernel of $\pi^*$ since $H^*(\B^2 \mathbb Z; \mathbb Z)$ is torsion-free.

\cref{prop:IKPimpliesIVP} remains valid integrally. More specifically, a Poincar\'e duality group $\Gamma$ that lies in $\IKP$ also lies in the obvious integral version of $\IVP$. The argument only used that $H^*(\B^2 \mathbb Z)$ is a domain, which holds for both $\mathbb Z$ and $\mathbb Q$ coefficients.

On the other hand the argument of \cref{Oscar's trick} 
uses that an element in $H^*(\B^2\C(\Gamma))$ which vanishes under the restriction to $H^*(\B^2\mathbb Z)$ for all embeddings $\mathbb Z \to C(\Gamma)$ has to be trivial. While this is true with rational coefficients for an arbitrary abelian group of finite rank (which the centre of a rational Poincar\'e duality group always is), if $C(\Gamma)$ is not finitely generated then we cannot, for example, exclude the possibility $C(\Gamma)=\Q$, in which case $H^3(\B^2 C(\Gamma); \mathbb Z) = \Ext(\mathbb Q, \mathbb Z) \neq 0$, and such classes vanish under any embedding $\mathbb Z \to C(\Gamma)$. However (because $C(\Gamma)$ is torsion-free) under the assumption that $C(\Gamma)$ is finitely-generated we have $H^{2n}(\B^2 C(\Gamma);\Z) = \Sym^n(\Hom(C(\Gamma),\Z))$ and the argument of  \cref{Oscar's trick} goes through.  
\end{proof}

The integral vanishing of all tautological classes for smooth bundles is not implied by this result, as not every Pontryagin class lies in the image of the forgetful map $H^*(\BSTOP; \mathbb Z) \to H^*(\BSO; \mathbb Z)$. However the work of Kirby--Siebenmann \cite[p.\ 200]{KS} describes the fibre of the map $\BSO \to \BSTOP$ in terms of groups of homotopy spheres. Since these are finite, some multiple of every class in $H^*(\BSO(d);\Z)$ is pulled back from $\BSTOP$ and in principle these multiples can be determined in terms of the orders of the groups of homotopy spheres; we shall refrain from spelling this out. In particular, \cref{integral result} implies that there is a bound on the order of $\kappa_c(M) \in H^*(\B\DIFF_h(M);\Z)$ independent of $M$.

\section{Examples and questions}\label{sec6}

In this section we will discuss explicit examples of manifolds whose tautological classes vanish and in particular satisfy our conjecture. We also provide counterexamples to a few possible extensions. For the reader's convenience let us first recall our conjecture.

\begin{conj*}
Let $M$ be a closed, connected, oriented, aspherical manifold. If $\C(\pi_1(M)) \neq 0$ then 
\[0 = \kappa_c \in H^*(\B\TOPtw_h(M);R)\]
for all $c \in H^*(\BSTOP\times K(\Z,d);R)$.
\end{conj*}

\subsection{Examples} 
We want to start out with some rather abstract examples that satisfy our conjecture with $R=\Q$.

\begin{Thm}\label{abstract examples}
The following classes of groups satisfy Burghelea's conjecture and are therefore contained in $\IKP$:
\begin{enumerate}
\item[(i)] Cocompact lattices in almost connected Lie groups,
\item[(ii)] $\CAT(0)$-groups,
\item[(iii)] solvable groups, linear groups over $\Q$,
\item[(iv)] groups of polynomial growth, hyperbolic groups, arithmetic groups, and
\item[(v)] elementary amenable groups.
\end{enumerate}
In particular, if $M$ is an oriented aspherical manifold with fundamental group in one of the above classes then $M$ satisfies our conjecture with $\Q$-coefficients.
\end{Thm}
\begin{proof}
If the dimension of $M$ is smaller than 4, then as explained \cref{vanishing low dimensions} the vanishing of rational tautological classes is known anyhow. We first claim that all groups in the above list are contained in the class $\LFJ$, so by \cref{pbc2} any such $M$ satisfies the {\pbc} with $\Z[\tfrac{1}{2}]$-coefficients provided the dimension of $M$ is at least 4. (In fact, with the exception of elementary amenable groups, these groups are contained in the class $\calfj$ and so by \cref{FJ vs BBC} any such $M$ satisfy the {\pbc} with any coefficients.) 
The groups not covered by \cref{status FJ conjecture} are groups of polynomial growth which are virtually solvable (in fact virtually nilpotent) by a celebrated theorem of Gromov \cite{Gromov2} and hence lie in $\calfj$, and linear groups over $\Q$ which lie in $\calfj$ by \cite{R}.

Hence it suffices to verify that all above groups satisfy Burghelea's conjecture. This has been done in the following references: (i) is dealt with in \cite[Theorem 4.27]{EM}. For (ii) see \cite[Corollary 4.8]{EM}, (iii) is \cite[Theorems 2.3 \& 2.4]{Eckmann}, (iv) is \cite[Theorem 4.3]{Ji} and (v) is \cite[Theorem 4.20]{EM} as finite Hirsch length is equivalent to finite homological dimension by \cite[Theorem I.2]{BK}.
\end{proof}

Actually, in the case of $\CAT(0)$-groups we do have an integral result.
\begin{Prop}\label{integral CAT(0)}
If $M$ is an orientable aspherical manifold of dimension at least 4 whose fundamental group is $\CAT(0)$, then $M$ satisfies our conjecture with $\Z$-coefficients.
\end{Prop}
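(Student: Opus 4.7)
The plan is to verify that $M$ satisfies all the hypotheses of Theorem \ref{integral result}, from which the conclusion is immediate. The case $\C(\pi_1(M)) = 0$ makes the conjecture vacuous, so we may assume the centre is non-trivial throughout.

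First, I would recall that $\CAT(0)$ groups are Farrell--Jones groups by Theorem \ref{status FJ conjecture}(i); since $\dim M \geq 4$, Proposition \ref{FJ vs BBC} then yields the {\pbc} for $M$ (and in fact the full block Borel conjecture whenever $\dim M \geq 5$). Second, the full Burghelea conjecture for $\CAT(0)$ groups is established in \cite[Corollary 4.9]{EM}, which in particular supplies the central part of Burghelea's conjecture needed to invoke Theorem \ref{integral result}.

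The remaining step, and the real content of the integral refinement of Theorem \ref{abstract examples}, is to verify that $\C(\pi_1(M))$ is finitely generated. Since $\pi_1(M)$ is torsion-free (as the fundamental group of an aspherical manifold), its centre is a torsion-free abelian subgroup and, in particular, solvable. By the Solvable Subgroup Theorem for $\CAT(0)$ groups, see \cite[Theorem II.7.8]{BH}, any solvable subgroup of a group acting properly and cocompactly by isometries on a $\CAT(0)$ space is finitely generated and virtually free abelian. Applied to $\C(\pi_1(M))$, this yields the required finite generation, and Theorem \ref{integral result} now delivers the desired integral vanishing.

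I expect the finite generation of the centre to be the only non-routine step in this outline: for fundamental groups of arbitrary aspherical manifolds this is apparently open, as the authors note in the discussion following Theorem \ref{integral result}, and it is precisely the $\CAT(0)$ hypothesis that allows one to bypass this obstacle via the classical structure theory of solvable subgroups of $\CAT(0)$ groups.
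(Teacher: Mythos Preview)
Your proposal is correct and follows essentially the same strategy as the paper: reduce to \cref{integral result} by checking the {\pbc}, Burghelea, and finite generation of the centre. The only variation is in how you obtain finite generation of $\C(\pi_1(M))$: you invoke the Solvable Subgroup Theorem \cite[Theorem II.7.8]{BH}, whereas the paper argues that $\CAT(0)$ groups are semihyperbolic \cite[Corollary 4.8]{BH} and that semihyperbolic groups have finitely generated centre \cite[Proposition 4.15(3)]{BH}. Both routes go through Bridson--Haefliger and are equally valid; your version has the minor advantage of being more self-contained in that it does not pass through the auxiliary notion of semihyperbolicity, while the paper's route yields the conclusion directly for the centre without first observing it is solvable.
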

\begin{proof}
$\CAT(0)$-groups are semihyperbolic, see \cite[Corollary 4.8]{BH} and hence have finitely generated centre, see \cite[Proposition 4.15; (3)]{BH}. Thus \cref{integral result} applies.
\end{proof}

\begin{Rmk}
Even though we consider Burghelea's conjecture the bottleneck of our work, rather than the Farrell--Jones conjectures, there do exist groups for which Burghelea's conjecture is known
and the Farrell--Jones conjectures are not, e.g.\ linear groups over arbitrary fields of characteristic $0$ \cite[Theorem 2.4]{Eckmann}.
\end{Rmk}

Concretely, we obtain the following consequences.

\begin{Cor}\label{concrete examples}
Our conjecture holds 
\begin{enumerate}
\item[(i)] rationally for oriented aspherical manifolds of the form $\Gamma \backslash G/ K$, where $G$ is a connected Lie group, $K$ is a maximal compact subgroup and $\Gamma$ is a cocompact lattice in $G$, and
\item[(ii)] integrally for oriented aspherical manifolds admitting a metric of non-positive sectional curvature.
\end{enumerate}
\end{Cor}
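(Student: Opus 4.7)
The plan is to deduce both statements directly from the preceding results of this section, since the heavy lifting has been done in Theorem 6.1.1 and Proposition 6.1.2.

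For part (i), I would first identify the fundamental group of $M = \Gamma \backslash G/K$ with $\Gamma$ itself. As $K$ is a maximal compact subgroup of the connected Lie group $G$, the homogeneous space $G/K$ is diffeomorphic to a Euclidean space, and is in particular contractible; freeness of the $\Gamma$-action on $G/K$ is forced by the requirement that the quotient be a manifold, so that $M$ is a $K(\Gamma, 1)$ and $\pi_1(M) = \Gamma$. With this identification, $\Gamma$ is exactly a cocompact lattice in the connected (hence almost connected) Lie group $G$, so Theorem 6.1.1~(i) applies and places $\Gamma$ in $\IKP$; combined with the {\pbc} (which holds by the Farrell--Jones property of such lattices) and the implications assembled in Section~\ref{sec5}, this yields the rational vanishing of all tautological classes on $\B\TOPtw_0(M)$ whenever $\C(\Gamma)$ is non-trivial, which is the rational version of our conjecture.

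For part (ii), I would invoke Proposition 6.1.2 after recognising that the fundamental group of a closed non-positively curved Riemannian manifold $M$ is a $\CAT(0)$-group. Indeed, its universal cover $\widetilde M$ is a simply connected, complete Riemannian manifold of non-positive sectional curvature, which by the Cartan--Hadamard theorem carries a canonical $\CAT(0)$-metric, and $\pi_1(M)$ acts on $\widetilde M$ properly discontinuously, cocompactly, and by isometries, hence geometrically. Proposition 6.1.2 then delivers the integral vanishing of all tautological classes on $\B\TOPtw_0(M)$ whenever $\C(\pi_1(M)) \neq 0$. The case of tori is an immediate specialisation: the flat metric on $T^n$ has vanishing sectional curvature and $\pi_1(T^n) = \mathbb Z^n$ is abelian, so coincides with its centre, which is non-zero for $n \geq 1$.

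There is accordingly no substantive obstacle in the proof; the only care needed is in verifying the two standard identifications above --- that $\pi_1(\Gamma \backslash G/K) = \Gamma$ via contractibility of $G/K$, and that fundamental groups of closed non-positively curved manifolds are $\CAT(0)$ via Cartan--Hadamard --- after which Theorem 6.1.1~(i) and Proposition 6.1.2 respectively close the argument.
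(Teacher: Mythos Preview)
Your proposal is correct and follows essentially the same route as the paper: part (i) is deduced from Theorem~6.1.1(i) after noting $\pi_1(\Gamma\backslash G/K)=\Gamma$, and part (ii) from Proposition~6.1.2 after noting that fundamental groups of closed non-positively curved manifolds are $\CAT(0)$. The additional detail you supply for these two identifications (contractibility of $G/K$, Cartan--Hadamard) is welcome but not strictly needed; note also that Proposition~6.1.2 assumes $\dim M\ge 4$, so for full precision in (ii) you should remark, as in the proof of Theorem~6.1.1, that low dimensions are handled separately by Remark~4.1.3.
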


Let us remind the reader that we already obtained stronger results for tori in \cref{npt}.

\begin{proof}
(i) is a direct consequence of \cref{abstract examples}. (ii) is a special case of \cref{integral CAT(0)} and thus holds even integrally, because the fundamental group of such manifolds are $\CAT(0)$. We are not aware of results about the finite generation of the centre of cocompact lattices in Lie groups, hence (i) is only a rational result. 
\end{proof}

As indicated all the groups appearing in \cref{abstract examples} are contained in $\IKP$. In addition to these examples we have the following result.

\begin{Prop}\label{closure props of KC}
The class $\IKP$ has the following properties:
\begin{enumerate}
\item[(i)] It is closed under extensions, and
\item[(ii)] if a group $\Gamma$ has a finite index subgroup $K$ which lies in $\IKP$, then $\Gamma$ lies in $\IKP$ as well.
\end{enumerate}

\end{Prop}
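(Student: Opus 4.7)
The plan is to work throughout with the characterisation of \cref{blubb}: $\Gamma$ lies in $\IKP$ if and only if $\cd_\Q(\Gamma) < \infty$ and $\cd_\Q^\tr(\Gamma/\langle g\rangle) < \infty$ for every central $g \in \Gamma$ of infinite order. This recasts all three parts as finiteness questions accessible to Hochschild--Serre and transfer. For (i), the three classes are essentially immediate: a centreless group has no central elements of infinite order, a finitely generated abelian group $A$ has $\cd_\Q(A)$ equal to its rank and every quotient $A/\langle g\rangle$ is again finitely generated abelian, and a finite group has $\cd_\Q = 0$ and no infinite-order elements.

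For (ii), suppose $1 \to K \to \Gamma \to Q \to 1$ with $K, Q \in \IKP$. Hochschild--Serre immediately yields $\cd_\Q(\Gamma) \le \cd_\Q(K) + \cd_\Q(Q) < \infty$. Fix a central $g \in \Gamma$ of infinite order, let $\bar g$ be its image in $Q$ (automatically central), and split on its order. If $\bar g$ has infinite order, then $g \mapsto \bar g$ gives an isomorphism $\langle g\rangle \cong \langle \bar g\rangle$, upgrading $\Gamma \to Q$ to a morphism of central $\Z$-extensions that is the identity on the fibre. By naturality of the classifier to $\B^2\Z$, the classifier of the $g$-extension factors as $\Gamma/\langle g\rangle \to Q/\langle \bar g\rangle \to \B^2\Z$, so non-injectivity of the induced map out of $H^*(\B^2\Z;\Q)$ is inherited from $Q \in \IKP$. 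If instead $\bar g$ has finite order $n$, then $g^n \in K$ is central in $K$ of infinite order and hence $\cd_\Q^\tr(K/\langle g^n\rangle) < \infty$. A direct computation identifies the kernel of the induced surjection $\Gamma/\langle g\rangle \to Q/\langle \bar g\rangle$ as $K\langle g\rangle/\langle g\rangle \cong K/\langle g^n\rangle$, yielding an extension
\[ 1 \to K/\langle g^n\rangle \to \Gamma/\langle g\rangle \to Q/\langle \bar g\rangle \to 1. \]
Since $\langle \bar g\rangle \cong \Z/n$ is finite, the Hochschild--Serre spectral sequence for $1 \to \Z/n \to Q \to Q/\langle \bar g\rangle \to 1$ collapses over $\Q$, giving $\cd_\Q(Q/\langle \bar g\rangle) \le \cd_\Q(Q) < \infty$. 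A second application of Hochschild--Serre to the displayed extension then bounds $\cd_\Q^\tr(\Gamma/\langle g\rangle)$ by $\cd_\Q^\tr(K/\langle g^n\rangle) + \cd_\Q(Q/\langle \bar g\rangle)$, both finite.

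For (iii), let $K \le \Gamma$ be of finite index with $K \in \IKP$. The standard transfer argument over $\Q$ shows that restriction is injective on rational cohomology, giving $\cd_\Q(\Gamma) \le \cd_\Q(K) < \infty$. For a central $g \in \Gamma$ of infinite order, pick the minimal $n \ge 1$ with $g^n \in K$ (such $n$ exists and divides $[\Gamma:K]$); then $g^n$ is central in $K$ of infinite order and $\cd_\Q^\tr(K/\langle g^n\rangle) < \infty$. Since $g$ is central in $\Gamma$, the subgroup $K\langle g\rangle$ is well-defined and $K/\langle g^n\rangle \cong K\langle g\rangle/\langle g\rangle$ sits as a subgroup of index at most $[\Gamma:K]$ in $\Gamma/\langle g\rangle$; transfer with trivial rational coefficients then gives $\cd_\Q^\tr(\Gamma/\langle g\rangle) \le \cd_\Q^\tr(K/\langle g^n\rangle) < \infty$. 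The main point of care is the finite-order subcase in (ii), where the a priori gap between $\cd_\Q^\tr$ and $\cd_\Q$ must be bridged using that $\langle \bar g\rangle$ has now become finite, so that Hochschild--Serre can be applied with arbitrary coefficients in the base; otherwise the argument is routine bookkeeping.
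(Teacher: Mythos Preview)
Your proof is correct and follows the same high-level structure as the paper's: the case split in (ii) on whether $q(g)$ has infinite or finite order in $Q$, and the use of a power $g^n \in K$ in (iii). The execution in the finite-order subcase of (ii) and in (iii), however, is genuinely different. The paper works throughout with the non-injectivity characterisation and the extension
\[
1 \lra K/\langle g^n\rangle \lra \Gamma/\langle g^n\rangle \lra Q \lra 1,
\]
uses the Serre spectral sequence to show that a power of the Euler class dies in $H^*(\B(\Gamma/\langle g^n\rangle);\Q)$, and then needs a separate \emph{Claim} (that $\B(\Gamma/\langle g^n\rangle) \to \B(\Gamma/\langle g\rangle)$ is a rational equivalence) to descend from $g^n$ to $g$; this Claim is then reused in (iii). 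You instead invoke the $\cd_\Q^\tr$ reformulation of \cref{blubb} and work directly with the extension
\[
1 \lra K/\langle g^n\rangle \lra \Gamma/\langle g\rangle \lra Q/\langle \bar g\rangle \lra 1,
\]
together with the observation $\cd_\Q(Q/\langle \bar g\rangle) \le \cd_\Q(Q)$ since $\langle \bar g\rangle$ is finite. This sidesteps the $g^n$-to-$g$ passage entirely, and the same idea (identifying $K/\langle g^n\rangle$ with a finite-index subgroup of $\Gamma/\langle g\rangle$) handles (iii) without it as well. Your route is slightly more economical; the paper's has the advantage of isolating the $g^n$-to-$g$ step as a reusable lemma.
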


This purely group theoretic statement has the following geometric interpretation: Together with \cref{LFJ} and \cref{pbc2} item (i) verifies our conjecture rationally for total spaces of fibre bundles provided the fundamental groups of both base and fibre are in both $\IKP$ and $\LFJ$. So, for instance, by our previously established results, our conjecture holds for an arbitrary torus bundle over a non-positively curved manifold, or vice versa, and iterates of those. Similarly, part (ii) enables passage from the total space of a finite cover to the base.

\begin{proof}[Proof of \cref{closure props of KC}]

For (i), we consider a short exact sequence of groups
\[1 \lto K \lto \Gamma \stackrel{q}{\lto} Q \lto 1\]
and assume that $K$ and $Q$ are contained in the class $\IKP$. Let $g \in \Gamma$ be a central element of infinite order. We need to show that the map $\B(\Gamma/\langle g \rangle) \xto{\rho(g)} \B^2\Z$
classifying the central extension
\[ 1 \lto \Z \overset{g}\lto \Gamma \lto \Gamma/\langle g \rangle \lto 1\]
has a non-trivial kernel in rational cohomology. We distinguish two cases, namely whether or not the central element $q(g) \in Q$ has infinite order.
We begin with the case where $q(g)$ is of infinite order in $Q$. If this is the case we have a commutative diagram
\[\xymatrix{\B(\Gamma/\langle g \rangle) \ar[r] \ar[d] & \B^2\Z \ar@{=}[d] \\ \B(Q/\langle q(g) \rangle) \ar[r] & \B^2 \Z.}\]
Since $Q$ lies in the class $\IKP$ it follows that the lower horizontal map has a non-trivial kernel in rational cohomology. By the commutativity of the diagram the same follows for the upper horizontal map.

Now let us assume that $q(g)$ is of finite order, say $n$, and observe that this makes $g^n$ a central element of $K$. Consider the short exact sequence of groups
\[ 1 \lto K/\langle g^n \rangle \lto \Gamma/\langle g^n\rangle \lto \Gamma/K \lto 1.\]
The central extension $1 \to \langle g^n \rangle \to K \to K/\langle g^n \rangle \to 1$ is pulled back from $1 \to \langle g^n \rangle \to \Gamma \to \Gamma/\langle g^n \rangle \to 1$ along $K/\langle g^n \rangle \to \Gamma/\langle g^n\rangle$, so it is classified by the composition
$$\tau : \B(K/\langle g^n \rangle) \overset{i}\lto \B(\Gamma/\langle g^n \rangle) \overset{\rho(g^n)}\lto \B^2 \Z.$$
Since $K \in \IKP$ there is an element
\[0 \neq x \in \ker(\tau^* \colon H^*(\B^2 \mathbb Z; \mathbb Q) \lra H^*(\B(K/\langle g^n \rangle); \mathbb Q)),\]
using which we define $y := \rho(g^n)^*(x)$ so that $i^*(y)=0$. Thus, in the Serre spectral sequence for this short exact sequence of groups, the class $y$ has Serre filtration $\geq 1$. Now $\Gamma/K \cong Q$ has finite rational cohomological dimension, say $d$, but then $y^{d+1}$ has Serre filtration $\geq d+1$ and so is zero. Thus we deduce that
\[0 \neq x^{d+1} \in \ker(\rho(g^n)^*\colon H^*(\B^2 \mathbb Z; \mathbb Q) \lra H^*(\B(\Gamma/\langle g^n \rangle); \mathbb Q)).\]
To finish the proof of (ii) it hence suffices to verify the following
\begin{claim}
If the map $\rho(g^n)$ has a non-trivial kernel on rational cohomology for some non-zero integer $n$, then so does the map $\rho(g)$.
\end{claim}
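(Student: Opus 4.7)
The plan is to exploit naturality of the classifying maps of central extensions with respect to the inclusion $\langle g^n\rangle \hookrightarrow \langle g\rangle$. First I would set up the map of short exact sequences
\[\xymatrix{
1 \ar[r] & \langle g^n\rangle \ar[r] \ar[d] & \Gamma \ar[r] \ar@{=}[d] & \Gamma/\langle g^n\rangle \ar[r] \ar[d]^-q & 1\\
1 \ar[r] & \langle g\rangle \ar[r] & \Gamma \ar[r] & \Gamma/\langle g\rangle \ar[r] & 1
}\]
in which the left vertical map is the inclusion; identifying both kernels with $\Z$ via $g \mapsto 1$ and $g^n \mapsto 1$, it becomes multiplication by $n$. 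By naturality of the classifying maps of central extensions this yields a commutative square
\[\xymatrix{
\B(\Gamma/\langle g^n\rangle) \ar[r]^-{\rho(g^n)} \ar[d]_-{q} & \B^2\Z \ar[d]^-{\cdot n} \\
\B(\Gamma/\langle g\rangle) \ar[r]^-{\rho(g)} & \B^2\Z.
}\]

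The main technical step is to check that both vertical maps induce isomorphisms on rational cohomology. For the right vertical this is elementary: multiplication by $n$ on $\B^2\Z$ acts as multiplication by $n^k$ on $H^{2k}(\B^2\Z;\Q)$, which is an isomorphism since $n \neq 0$. For the left vertical $q$ I would note that it sits in a fibre sequence whose homotopy fibre is $\B(\langle g\rangle/\langle g^n\rangle) \simeq \B(\Z/n)$; as this fibre is rationally acyclic, the Serre spectral sequence collapses to show that $q^*$ is a rational isomorphism.

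The claim then follows by a short diagram chase. Given $0 \neq y \in \ker\rho(g^n)^*$, let $y' \in H^*(\B^2\Z;\Q)$ be its (unique) preimage under the right vertical isomorphism. Commutativity gives $q^*\rho(g)^*y' = \rho(g^n)^*y = 0$, and injectivity of $q^*$ forces $\rho(g)^*y' = 0$; as $y' \neq 0$ this is the desired nontrivial element of $\ker\rho(g)^*$. I do not anticipate any real obstacle here, since the whole argument reduces to the fact that both $\B(\Z/n)$ and the $n$-th power self-isogeny of $\Z$ become invisible rationally.
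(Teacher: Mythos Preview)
Your proof is correct and essentially identical to the paper's own argument: the paper likewise sets up the commutative square with $\B(\Gamma/\langle g^n\rangle) \to \B(\Gamma/\langle g\rangle)$ on the left and $\cdot n$ on the right, observes that the left map is a rational equivalence because its fibre $\B(\Z/n)$ is rationally acyclic and that the right map is a rational isomorphism, and concludes. Your version is slightly more detailed (the explicit diagram chase and the computation of $(\cdot n)^*$ on $H^{2k}$), but the approach is the same.
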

\begin{proof}[Proof of Claim]
We consider the sequence of subgroups
$\langle g^n \rangle \subseteq \langle g \rangle \subseteq \Gamma$
and conclude that the map
\[ \B(\Gamma/\langle g^n \rangle) \lto \B(\Gamma/\langle g \rangle) \]
is a rational equivalence as its fibre $\B(\Z/n)$ is rationally contractible. From the commutative diagram
\[\xymatrix{\B(\Gamma/\langle g^n \rangle) \ar[r] \ar[d] & \B^2\Z \ar[d]^-{\cdot n} \\ \B(\Gamma/\langle g \rangle) \ar[r] & \B^2\Z }\]
and the fact that the right vertical map induces an isomorphism in rational cohomology we conclude the claim.
\end{proof}

To obtain (ii), let $K \subseteq \Gamma$ be a finite index subgroup with $K \in \IKP$ and $g \in \Gamma$ a central element of infinite order. Since $K$ has finite index in $\Gamma$ it follows that there exists an $n$ such that $g^n$ lies in $K$, and thus is a central element of infinite order in $K$. Therefore by assumption the composite
\[\rho(g^n)^* \colon H^*(\B^2\mathbb Z; \mathbb Q) \longrightarrow H^*(\B(\Gamma/\langle g^n \rangle);\mathbb Q) \longrightarrow H^*(\B(K/\langle g^n \rangle);\mathbb Q) \]
has non-trivial kernel. Now, the map $\B K \to \B\Gamma$ has finite, discrete homotopy fibres, so the second map in the composite is injective. It thus follows that $\rho(g^n)$ has non-trivial kernel on rational cohomology, so by the above (verified) claim, so does $\rho(g)$. Hence $\Gamma$ lies in $\IKP$.
\end{proof}

\begin{Rmk}
Let $M$ be an aspherical manifold of dimension at least $5$ satisfying the block Borel conjecture and $\Gamma$ be its fundamental group. Suppose the conjecture is true rationally for $M$ and that $\Out(\Gamma)$ has finite rational cohomological dimension. Then it follows easily from the Serre spectral sequence for the fibration
\[ \B\wt{\TOP}_h(M) \lto \B\wt{\TOP}(M) \lto \B\Out(\Gamma) \]
that the tautological classes in the rational cohomology of $\B\wt{\TOP}(M)$ are nilpotent.

This is for instance the case if the fundamental group of $M$ is nilpotent: Since nilpotent groups are solvable, we know that our conjecture is satisfied. Moreover, a finitely generated nilpotent group is polycyclic, hence by \cite[Theorem 1.1]{BG} its outer automorphism group is arithmetic and thus has finite rational cohomological dimension, see \cite{Borel}.
\end{Rmk}

We provide two more examples of manifolds satisfying our conjecture using \cref{lemma vanishing}.
\begin{Prop}
Let $N$ be an oriented aspherical manifold and assume that $\pi_1(N)$ is a Farrell--Jones group. Let $e \in H^2(N;\Z)$ be a torsion class,
and let $M$ be the total space of the principal $S^1$-bundle classified by $e$. Then 
our conjecture holds rationally for $M$.
\end{Prop}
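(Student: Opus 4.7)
My first step would be to identify the fundamental group of $M$. Since $N$ is aspherical, the long exact sequence for the $S^1$-fibration $S^1 \to M \to N$ shows that $M$ is aspherical as well. Writing $\Gamma := \pi_1(M)$, this yields a central extension
\[1 \to \Z \to \Gamma \to \pi_1(N) \to 1\]
classified by the image of $e$ under $H^2(N;\Z) \cong H^2(\pi_1(N);\Z)$. Orientability of the principal $S^1$-bundle combined with the orientation of $N$ provides one on $M$. Hence $\Gamma$ is an oriented rational Poincar\'e duality group of dimension $d := \dim M$ with non-trivial centre containing the image of $\Z$. The plan is to verify that $\Gamma$ satisfies the \pbc and lies in $\KC$, and then invoke \cref{blockfib} together with \cref{Idontliketricks} to conclude the vanishing of rational tautological classes.

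For the \pbc I would apply the closure properties of $\calfj$ from \cref{status FJ conjecture}. By hypothesis $\pi_1(N) \in \calfj$, and for any cyclic subgroup $C \subseteq \pi_1(N)$ the preimage of $C$ in $\Gamma$ is a central extension of $C$ by $\Z$. If $C$ is finite, this preimage is virtually infinite cyclic; if $C \cong \Z$, then $H^2(\Z;\Z) = 0$ forces the extension to split and the preimage is $\Z^2$. In either case it is virtually solvable, hence in $\calfj$ by part (ii) of \cref{status FJ conjecture}. Closure under extensions then gives $\Gamma \in \calfj$, and \cref{FJ vs BBC} yields the \pbc for $M$ provided $\dim M \geq 4$; the remaining low-dimensional cases are handled directly by \cref{vanishing low dimensions}.

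For membership of $\Gamma$ in $\KC$ I would invoke \cref{lemma vanishing}: finiteness of $\cd_\Q(\Gamma)$ is clear since $\Gamma = \pi_1(M)$, and it remains to check that the central $\Z$ maps non-trivially to $\Gamma^\ab \otimes \Q$. Here the torsion hypothesis on $e$ enters via the five-term exact sequence of the central extension:
\[H_2(\pi_1(N); \Z) \xto{\partial} \Z \to \Gamma^\ab \to \pi_1(N)^\ab \to 0.\]
The connecting map $\partial$ is given by pairing with $e \in H^2(\pi_1(N);\Z)$; since $e$ is torsion and the target $\Z$ is torsion-free, $\partial$ must vanish. Hence $\Z \hookrightarrow \Gamma^\ab$, and flatness of $\Q$ preserves injectivity upon tensoring. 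This confirms $\Gamma \in \KC$, and \cref{Idontliketricks} transfers this to $\Gamma \in \GC$; combined with the \pbc this yields the claimed vanishing via \cref{blockfib}.

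The main obstacle I anticipated was ensuring that the central $\Z$ survives rationally in $\Gamma^\ab$, but the torsion hypothesis on $e$ resolves this cleanly through the five-term sequence. A minor bookkeeping point to handle in the write-up is the standard identification of the extension class of $\Gamma \to \pi_1(N)$ with the image of the Euler class $e$ in $H^2(\pi_1(N);\Z)$, but this is automatic since $N$ is aspherical.
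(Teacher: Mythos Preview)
Your proof is correct and follows the same strategy as the paper: verify that $\Gamma = \pi_1(M)$ is a Farrell--Jones group (hence $M$ satisfies the \pbc), then apply \cref{lemma vanishing} by showing that the central $\Z$ survives in $\Gamma^{\ab}\otimes\Q$, and conclude via \cref{Idontliketricks}. Your five-term exact sequence argument is precisely the algebraic content of the paper's Serre spectral sequence observation that $H_1(S^1;\Q)\to H_1(M;\Q)$ is non-zero because $e$ is rationally trivial.

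The only genuine difference is in the Farrell--Jones verification: the paper uses the torsion hypothesis on $e$ a second time, observing that some multiple $ne=0$ yields a finite cover $S^1\times N \to M$, so $\pi_1(N)\times\Z$ sits in $\Gamma$ with finite index and property~(viii) of \cref{status FJ conjecture} applies. You instead use the extension closure property~(vii) directly, checking that preimages of cyclic subgroups of $\pi_1(N)$ are abelian. Your route is slightly more economical in that it does not invoke the torsion of $e$ for this step (and since $\pi_1(N)$ is torsion-free, the case of finite $C$ is in fact vacuous apart from $C=1$).
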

\begin{proof}
We first prove that $M$ satisfies the block Borel conjecture.
For this we observe that $M$ is finitely covered by the trivial $S^1$-bundle over $N$, as $e$ is a torsion class. Thus $\pi_1(M)$ contains $\pi_1(N)\times \Z$ as a finite index subgroup and hence is a Farrell--Jones group.
To prove the proposition we consider the Serre spectral sequence for the fibration
\[ S^1 \lto M \lto N.\]
By inspection, the map $H_1(S^1;\Q) \to H_1(M;\Q)$ is non-zero, since $e$ is rationally zero. We deduce the proposition from \cref{lemma vanishing} using the centrality of the inclusion $\pi_1(S^1) \to \pi_1(M)$.
\end{proof}

\begin{Cor}
Let $N$ be an closed, oriented, aspherical manifold such that $\pi_1(N)$ is a Farrell--Jones group. Then our conjecture holds rationally for $M = N \times S^1$.
\end{Cor}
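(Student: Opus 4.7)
The plan is to recognize this corollary as an essentially immediate special case of the preceding proposition. Indeed, $M = N \times S^1$ is exactly the total space of the trivial principal $S^1$-bundle over $N$, and the trivial bundle is classified by the zero class $0 \in H^2(N;\Z)$, which is (vacuously) a torsion class. Thus the hypotheses of the previous proposition are satisfied, and its conclusion yields the rational vanishing of tautological classes on $\B\TOPtw_0(M)$.

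For the reader's convenience I would briefly unpack the key ingredients rather than merely invoke the proposition. First, $\pi_1(M) = \pi_1(N) \times \Z$ is a Farrell--Jones group because $\calfj$ is closed under finite products (\cref{status FJ conjecture}(vi)) and $\Z$ is a Farrell--Jones group (e.g.\ as an abelian, hence solvable, group). By \cref{FJ vs BBC}, $M$ therefore satisfies the block Borel conjecture in the relevant dimensions; the low-dimensional cases are already covered by \cref{vanishing low dimensions}.

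Second, the centre $\C(\pi_1(M)) = \C(\pi_1(N)) \times \Z$ is non-trivial, and the canonical map $\C(\pi_1(M)) \to \pi_1(M)^\ab \otimes \Q$ restricts on the $\Z$-summand to the natural inclusion $\Z \hookrightarrow \Q \subseteq (\pi_1(N)^\ab \otimes \Q) \oplus \Q$, which is non-zero. Hence \cref{lemma vanishing} shows that $\pi_1(M) \in \KC$, and then \cref{Idontliketricks} (applied to the Poincar\'e duality group $\pi_1(M)$) gives $\pi_1(M) \in \GC$. Combining this with \cref{blockfib}, the Gysin map for the underlying fibration of the universal $M$-block bundle with trivial fibre transport vanishes rationally, which by construction of the tautological classes (\cref{def kappa}) yields $\kappa_c(M) = 0 \in H^*(\B\TOPtw_0(M);\Q)$ for all $c$, as required.

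There is essentially no obstacle: the argument is a direct application of a previous proposition with $e=0$, and the only slightly delicate point --- checking that the block Borel conjecture is available --- is handled by the closure of $\calfj$ under products.
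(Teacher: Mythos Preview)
Your proposal is correct and takes essentially the same approach as the paper: the corollary is stated without proof in the paper, being an immediate instance of the preceding proposition with $e=0$, which is exactly what you identify in your first paragraph. Your subsequent unpacking simply spells out the argument of that proposition in this special case.
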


Let us close this section by considering mapping tori. Let $\varphi\colon M \to M$ be a orientation preserving homeomorphism of an oriented aspherical manifold $M$. Then $\varphi$ determines an outer automorphism $\varphi_\ast$ of $\pi_1(M)$. Picking a representing automorphism $\hat\varphi\colon \pi_1(M) \to \pi_1(M)$, there is an isomorphism between the fundamental group of the mapping torus $M_\varphi$, which is again oriented and aspherical, and $\pi_1(M) \rtimes_{\hat\varphi} \mathbb Z$. The centre of such a semi-direct product is readily computed to be
\[\{(g,n) \mid \hat\varphi^n = c_{g^{-1}}, \hat\varphi(g) = g\}\]
and so using \cref{lemma vanishing} we obtain:

\begin{Prop}\label{torunks}
Let $\varphi\colon M \to M$ be an automorphism of an oriented, aspherical manifold $M$, such that $\pi_1(M)$ is a Farrell--Jones group. Assume that the induced automorphism on $\pi_1(M)$ admits a representative $\hat\varphi$ such that $\hat\varphi^n$ is conjugation by an element of $\pi_1(M)$ fixed by $\hat\varphi$ for some $n > 0$. Then our conjecture holds rationally for $M_\varphi$.
\end{Prop}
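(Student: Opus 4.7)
The plan is to exhibit a central element of $\Gamma := \pi_1(M_\varphi) \cong \pi_1(M) \rtimes_{\hat\varphi} \mathbb Z$ whose image in $\Gamma^{\ab} \otimes \mathbb Q$ is non-trivial, so that \cref{lemma vanishing} places $\Gamma$ in $\KC$. From there the proof reduces to invoking results already established in \cref{sec5}, together with the verification that $M_\varphi$ satisfies the rational \pbc.

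First I would check that the given element $z = (g, n) \in \Gamma$ is indeed central. For any $(h, m) \in \Gamma$, the hypothesis $\hat\varphi(g) = g$ yields $(h, m)(g, n) = (h\, \hat\varphi^m(g), m+n) = (hg, m+n)$, while the hypothesis $\hat\varphi^n = c_{g^{-1}}$ yields $(g, n)(h, m) = (g \cdot g^{-1} h g, n+m) = (hg, n+m)$. Pushing along the canonical projection $\Gamma \to \mathbb Z$ sends $z$ to $n \neq 0$, so the composition $\C(\Gamma) \to \Gamma^{\ab} \otimes \mathbb Q \to \mathbb Q$ is already non-trivial. Since $\Gamma = \pi_1(M_\varphi)$ is the fundamental group of a closed aspherical manifold, it has finite rational cohomological dimension, and \cref{lemma vanishing} gives $\Gamma \in \KC$. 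Because $M_\varphi$ is oriented and aspherical, $\Gamma$ is an oriented rational Poincar\'e duality group, so \cref{Idontliketricks} promotes this to $\Gamma \in \GC$; equivalently, the Gysin map of the universal fibration in \cref{blockfib} vanishes rationally.

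To convert this into a statement about $\B\TOPtw_0(M_\varphi)$, I would appeal to \cref{pbc2}, which needs only that $\Gamma \in \LFJ$. The short exact sequence $1 \to \pi_1(M) \to \Gamma \to \mathbb Z \to 1$ together with \cref{LFJ}(iv) does the job: $\pi_1(M)$ is Farrell--Jones by hypothesis and hence in $\LFJ$ by \cref{LFJ}(i), the quotient $\mathbb Z$ is Farrell--Jones as well, and the preimage of the unique finite subgroup $\{0\} \subset \mathbb Z$ is just $\pi_1(M) \in \LFJ$. Provided $\dim M_\varphi = \dim M + 1 \geq 4$ this yields the rational \pbc for $M_\varphi$; the remaining low-dimensional cases are already subsumed by \cref{vanishing low dimensions}. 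Combining the rational \pbc with $\Gamma \in \GC$ via \cref{blockfib} produces the desired rational vanishing of all $\kappa_c(M_\varphi) \in H^*(\B\TOPtw_0(M_\varphi);\mathbb Q)$.

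I do not expect a genuine obstacle in carrying this plan out: the real content lies in recognising that the hypothesis on $\hat\varphi$ is precisely engineered so that $(g, n)$ lies in $\C(\Gamma)$ and maps non-trivially into $\Gamma^{\ab} \otimes \mathbb Q$, at which point the previously established machinery of \cref{sec5} and \cref{sec4} takes over. The mildest subtlety is the bookkeeping to stay inside $\LFJ$ rather than trying to verify the full Farrell--Jones package for the semidirect product, but this is cleanly handled by the extension-closure statement in \cref{LFJ}(iv).
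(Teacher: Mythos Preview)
Your proposal is correct and follows exactly the paper's approach: verify that $(g,n)$ lies in the centre and has non-trivial image in $\Gamma^{\ab}\otimes\Q$ via the projection to $\mathbb Z$, apply \cref{lemma vanishing} to land in $\KC$, and then combine \cref{LFJ} with \cref{pbc2} for the rational \pbc. Your write-up simply makes explicit the centrality computation and the passage $\KC\Rightarrow\GC$ via \cref{Idontliketricks} that the paper leaves to the reader.
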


To complete the proof note that the {\pbc} with $\Q$-coefficients holds for $\pi_1(M_\varphi) \cong \pi_1(M) \rtimes_{\hat\varphi} \mathbb Z$ by \cref{LFJ} and \cref{pbc2}. The assumption is in particular satisfied for a finite order automorphism with a fixed point, e.g.\ the identity, thereby giving another proof that $S^1 \times M$ satisfies our conjecture rationally, whenever $\pi_1(M)$ is a Farrell--Jones group. We leave it as an exercise to the reader to check that the condition given on $\varphi$ is actually independent of the representative $\hat\varphi$ chosen.

Finally, let us again consider the manifolds constructed in \cite{CWY}, that provide counterexamples to $\B(\Gamma/C(\Gamma))$ being a Poincar\'e complex. While we do not know whether the fundamental groups of these manifolds are Farrell--Jones groups, we still have:

\begin{Prop}
The aspherical manifolds constructed by Cappell--Weinberger--Yan just mentioned have fundamental groups in $\IKP$.
\end{Prop}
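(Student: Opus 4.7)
The plan is as follows. By the characterisation given in \cref{blubb}, the claim reduces to verifying two finiteness conditions on the fundamental group $\Gamma$: that $\cd_\Q(\Gamma) < \infty$, and that $\cd_\Q^\tr(\Gamma/\langle g \rangle) < \infty$ for every central element $g \in \Gamma$ of infinite order. The first is automatic, since $M$ is a closed aspherical manifold and hence $\cd_\Q(\Gamma) = \dim(M)$.

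For the second, I would first extract from the Cappell--Weinberger--Yu construction the structural fact that, up to finite covers, the manifolds $M$ in question arise as total spaces of torus bundles over finite-dimensional aspherical orbifolds. Concretely, I expect to obtain a central extension
\[1 \longrightarrow A \longrightarrow \Gamma \longrightarrow Q \longrightarrow 1\]
in which $A \subseteq \C(\Gamma)$ is finitely generated free abelian and $Q$ acts properly and cocompactly on a contractible, finite-dimensional complex (the universal cover of the orbifold base). In particular, $Q$ would then be virtually torsion-free and admit a finite-index subgroup $Q_0$ satisfying $\cd_\Q(Q_0) < \infty$. Unpacking the CWY construction to extract this algebraic skeleton is the main obstacle; once it is in place the remaining steps are routine.

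Granting the structure above, take a central $g \in \Gamma$ of infinite order. Some power $g^n$ lies in $A$, since the image of $g$ in $Q$ must be torsion; and $\Gamma/\langle g\rangle$ has the same rational cohomology as its finite central cover $\Gamma/\langle g^n\rangle$ via the Hochschild--Serre spectral sequence for the kernel $\langle g\rangle/\langle g^n\rangle \cong \Z/n$, whose rational cohomology is concentrated in degree zero. So we may replace $g$ by $g^n$ and assume $g \in A$. The resulting quotient then sits in a central extension
\[1 \longrightarrow A/\langle g \rangle \longrightarrow \Gamma/\langle g \rangle \longrightarrow Q \longrightarrow 1\]
with finitely generated abelian kernel, hence of finite $\cd_\Q$. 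Pulling back a finite-index torsion-free subgroup of $Q$ furnishes a finite-index subgroup $K \subseteq \Gamma/\langle g\rangle$ sitting in an extension $1 \to A/\langle g\rangle \to K \to Q_0 \to 1$. A second Hochschild--Serre argument bounds $\cd_\Q^\tr(K)$ by $\cd_\Q(A/\langle g\rangle) + \cd_\Q(Q_0) < \infty$, and the standard transfer argument with trivial $\Q$-coefficients transports this bound up the finite-index inclusion to give $\cd_\Q^\tr(\Gamma/\langle g\rangle) < \infty$, completing the verification.
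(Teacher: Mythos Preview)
Your approach is workable in spirit but takes a more laborious route than the paper, and leaves both the key structural input and one inferential step unresolved. The paper's proof is a two-liner that leverages machinery already built: the relevant fact extracted from \cite{CWY} is simply that the manifolds in question admit a 2-fold cover of the form $S^1 \times V$ with $V$ closed aspherical and $\pi_1(V)$ centreless. Then $\pi_1(S^1 \times V) = \Z \times \pi_1(V)$ lies in $\IKP$ by \cref{closure props of KC} (i) and (ii), and $\Gamma$ itself lies in $\IKP$ by part (iii) of the same proposition (passage to finite-index supergroups). No spectral sequences need to be run by hand.

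Your argument, by contrast, posits a more elaborate structural skeleton (a central extension $1 \to A \to \Gamma \to Q \to 1$ with $Q$ an orbifold group) that you acknowledge not having verified, and then carries out Hochschild--Serre and transfer arguments that essentially reprove the relevant parts of \cref{closure props of KC} in this special case. There is also a genuine gap: you assert that the image of a central $g \in \Gamma$ in $Q$ must be torsion, but this does not follow from $A \subseteq \C(\Gamma)$ alone --- it requires knowing something about $\C(Q)$, which you have not established. In the actual CWY examples this does hold (since any central element of $\Gamma$ has a power in the $\Z$-factor of the index-2 subgroup $\Z \times \pi_1(V)$, as $\pi_1(V)$ is centreless), but that is exactly the simpler structural fact the paper uses directly. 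So the cleaner fix is to replace your posited extension by the 2-fold cover $S^1 \times V$ and invoke \cref{closure props of KC}.
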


\begin{proof}
There exist 2-fold covers of these manifolds which are of the form $S^1 \times V$, where $V$ is an aspherical manifold with centreless fundamental group, see \cite{CWY}, essentially by construction. The group $\pi_1(S^1 \times V)$ lies in the class $\IKP$, because its centre is a direct factor. \cref{closure props of KC} part (ii) then yields the claim.
\end{proof}

Finally, we record another consequence of our vanishing results.
\begin{Prop}
Let $B$ be a stably parallelisable manifold of dimension $n>0$ and let $M$ be an oriented aspherical manifold whose fundamental group is a Farrell--Jones group and satisfies the central part of Burghelea's conjecture. Suppose $\pi\colon E \to B$ is an $M$-manifold bundle with trivial fibre transport. Then all Pontryagin numbers of $E$ vanish.
\end{Prop}
\begin{proof}
Since $B$ is stably parallelisable, we find that the stable vertical tangent bundle $T^s_v(\pi)$ is isomorphic to the stable tangent bundle $T^s(E)$ of $E$. By our main theorem, we obtain that $0 = \kappa_{c}(\pi) \in H^n(B;\Q)$, for any $c \in H^{n+d}(\BSTOP;\Q)$. Since $\pi_! \colon H^{n+d}(E;\Q) \to H^n(B;\Q)$ is an isomorphism (where $d$ is the dimension of $M$), we deduce the proposition from
\[ 0 = \kappa_c(\pi) = \pi_!(c(T^s_v(\pi))) = \pi_!(c(T^s(E))).\]
\end{proof}

\subsection{Several open questions}\label{questions}
We want to finish with some open problems that would be interesting to address. The first question seems to be a known open problem:

\begin{Question}
Let $M$ be a closed, connected, aspherical manifold. Is the centre of its fundamental group finitely generated?
\end{Question}
Naturally, this is obvious for centreless, e.g.\ hyperbolic, groups, but it is also true for torsion-free nilpotent groups, which (unless trivial themselves) always have non-trivial centre, see \cite[Proposition 6.19]{Neofytidis}. Furthermore, as mentioned earlier, $\CAT(0)$-groups have finitely generated centre.

\begin{Question}
Are fundamental groups of aspherical manifolds always contained in $\CP$?
\end{Question}

This is true for several classes of manifolds (e.g.\ those with nilpotent fundamental group, since $\Gamma/C(\Gamma)$ is then finitely generated, nilpotent and torsion-free (by \cite[5.2.19]{Robinson}) and therefore poly-$\Z$ by \cite[5.2.20]{Robinson}), and would yield a general proof of the rational part of our conjecture. The statement does not seem logically comparable to Burghelea's conjecture, but could be easier to prove. 

A positive answer to the next question would provide a geometric reason for $\Gamma \in \IKP$.

\begin{Question}
Let $M$ be an oriented, aspherical manifold with fundamental group $\Gamma$ and let $g \in \Gamma$ be a central element. Is there a finite cover $N \to M$ such that $g \in \pi_1(N)$ and such that $g$ is realised by a principal $S^1$-action on $N$?
\end{Question}

Note that the passage to a finite cover really is necessary. The examples from \cite{CWY} are aspherical manifolds $M$ such that the quotient $\Gamma/\C(\Gamma)$ contains an element of order $2$. This question has a positive answer in the case of 3-manifolds, by the proof of the Seifert fiber space conjecture \cite{Gabai, CassonJungreis}.

\begin{Question}
Do the tautological classes of a smooth, oriented aspherical manifold $M$ vanish in the cohomology of $\B\DIFF_h(M)$ with arbitrary coefficients?
\end{Question}

\begin{Question}
Do the tautological classes vanish in the cohomology of $\B\TOPtw{\mathstrut}^{\>\!+}(M)$ if $M$ is oriented, aspherical, and either odd dimensional or its fundamental group has non-trivial centre?
\end{Question}
We expect the answer to both parts of the question to be no, but did not find a counterexample.

\begin{Question}
Suppose $M$ is an oriented aspherical manifold whose fundamental group has a non-trivial centre. Is $M$ nullbordant? 
\end{Question}

If $M$ satisfies our conjecture rationally, then it follows that $M$ is a torsion element in the bordism ring. Notice that even if $M$ is smooth and satisfies the integral version of our conjecture we cannot yet deduce that $M$ is nullbordant. Really, we are asking about Stiefel--Whitney numbers of an aspherical manifold and how their triviality relies on the non-triviality of the centre of its fundamental group.

\bibliographystyle{amsalpha}
\bibliography{HLLRW-asphericaltautological}

\newcommand{\etalchar}[1]{$^{#1}$}
\providecommand{\bysame}{\leavevmode\hbox to3em{\hrulefill}\thinspace}
\providecommand{\MR}{\relax\ifhmode\unskip\space\fi MR }
\providecommand{\MRhref}[2]{%
  \href{http://www.ams.org/mathscinet-getitem?mr=#1}{#2}
}
\providecommand{\href}[2]{#2}
\begin{thebibliography}{GGRW17}

\bibitem[BFJ16]{BFJ}
M.~Bustamante, F.~T. Farrell, and Y.~Jiang, \emph{Rigidity and characteristic
  classes of smooth bundles with nonpositively curved fibers}, J. Topol.
  \textbf{9} (2016), no.~3, 934--956. \MR{3551844}

\bibitem[BFL14]{BFL}
A.~Bartels, F.~T. Farrell, and W.~L\"uck, \emph{The {F}arrell-{J}ones
  conjecture for cocompact lattices in virtually connected {L}ie groups}, J.
  Amer. Math. Soc. \textbf{27} (2014), no.~2, 339--388. \MR{3164984}

\bibitem[BG06]{BG}
O.~Baues and F.~Grunewald, \emph{Automorphism groups of polycyclic-by-finite
  groups and arithmetic groups}, Publ. Math. Inst. Hautes \'Etudes Sci. (2006),
  no.~104, 213--268. \MR{2264837}

\bibitem[BH58]{BorHir}
A.~Borel and F.~Hirzebruch, \emph{Characteristic classes and homogeneous
  spaces. {I}}, Amer. J. Math. \textbf{80} (1958), 458--538.

\bibitem[BH99]{BH}
M.~R. Bridson and A.~Haefliger, \emph{Metric spaces of non-positive curvature},
  Grundlehren der Mathematischen Wissenschaften, vol. 319, Springer-Verlag,
  Berlin, 1999. \MR{1744486}

\bibitem[BK15]{BK}
M.~R. Bridson and P.~H. Kropholler, \emph{Dimension of elementary amenable
  groups}, J. Reine Angew. Math. \textbf{699} (2015), 217--243. \MR{3305926}

\bibitem[BL10]{BL}
A.~Bartels and W.~L\"uck, \emph{On crossed product rings with twisted
  involutions, their module categories and {$L$}-theory}, Cohomology of groups
  and algebraic {$K$}-theory, Adv. Lect. Math. (ALM), vol.~12, Int. Press,
  Somerville, MA, 2010, pp.~1--54. \MR{2655174}

\bibitem[BL12]{BL2}
\bysame, \emph{The {B}orel conjecture for hyperbolic and {${\rm
  CAT}(0)$}-groups}, Ann. of Math. (2) \textbf{175} (2012), no.~2, 631--689.
  \MR{2993750}

\bibitem[BLR75]{BLR3}
D.~Burghelea, R.~Lashof, and M.~Rothenberg, \emph{Groups of automorphisms of
  manifolds}, Lecture Notes in Mathematics, Vol. 473, Springer-Verlag,
  Berlin-New York, 1975, With an appendix (``The topological category'') by E.
  Pedersen. \MR{0380841}

\bibitem[BLR08a]{BLR}
A.~Bartels, W.~L\"uck, and H.~Reich, \emph{The {$K$}-theoretic
  {F}arrell-{J}ones conjecture for hyperbolic groups}, Invent. Math.
  \textbf{172} (2008), no.~1, 29--70. \MR{2385666}

\bibitem[BLR08b]{BLR2}
\bysame, \emph{On the {F}arrell-{J}ones conjecture and its applications}, J.
  Topol. \textbf{1} (2008), no.~1, 57--86. \MR{2365652}

\bibitem[BM20]{BM}
A.~Berglund and I.~Madsen, \emph{Rational homotopy theory of automorphism of
  manifolds}, Acta Math. (to appear) (2020).

\bibitem[Bor75]{Borel}
A.~Borel, \emph{Cohomology of arithmetic groups}, Proc. {I}nt. {C}ong. {M}ath.
  ({V}ancouver, {B}.{C}., 1974), {V}ol. 1 (1975), 435--442. \MR{0578905}

\bibitem[BR07]{BR}
A.~Bartels and H.~Reich, \emph{Coefficients for the {F}arrell-{J}ones
  conjecture}, Adv. Math. \textbf{209} (2007), no.~1, 337--362. \MR{2294225}

\bibitem[Bro82]{Brown}
K.~S. Brown, \emph{Cohomology of groups}, Graduate Texts in Mathematics,
  vol.~87, Springer-Verlag, New York-Berlin, 1982. \MR{672956}

\bibitem[Bur85]{Burghelea}
D.~Burghelea, \emph{The cyclic homology of the group rings}, Comment. Math.
  Helv. \textbf{60} (1985), no.~3, 354--365. \MR{814144}

\bibitem[Cap74]{Cappell}
S.~E. Cappell, \emph{Unitary nilpotent groups and {H}ermitian {$K$}-theory.
  {I}}, Bull. Amer. Math. Soc. \textbf{80} (1974), 1117--1122. \MR{0358815}

\bibitem[CG77]{CG}
A.~Casson and D.~H. Gottlieb, \emph{Fibrations with compact fibres}, Amer. J.
  Math. \textbf{99} (1977), no.~1, 159--189. \MR{0436144}

\bibitem[CJ94]{CassonJungreis}
A.~Casson and D.~Jungreis, \emph{Convergence groups and {S}eifert fibered
  {$3$}-manifolds}, Invent. Math. \textbf{118} (1994), no.~3, 441--456.
  \MR{1296353}

\bibitem[CW16]{CW}
S.~R. Costenoble and S.~Waner, \emph{Equivariant ordinary homology and
  cohomology}, Lecture Notes in Mathematics, vol. 2178, Springer, Cham, 2016.
  \MR{3585352}

\bibitem[CWY13]{CWY}
S.~Cappell, S.~Weinberger, and M.~Yan, \emph{Closed aspherical manifolds with
  center}, J. Topol. \textbf{6} (2013), no.~4, 1009--1018. \MR{3145148}

\bibitem[Ebe13]{Ebert}
J.~Ebert, \emph{A vanishing theorem for characteristic classes of
  odd-dimensional manifold bundles}, J. Reine Angew. Math. \textbf{684} (2013),
  1--29. \MR{3181555}

\bibitem[Eck86]{Eckmann}
B.~Eckmann, \emph{Cyclic homology of groups and the {B}ass conjecture},
  Comment. Math. Helv. \textbf{61} (1986), no.~2, 193--202. \MR{856086}

\bibitem[EE69]{EE}
C.~J. Earle and J.~Eells, \emph{A fibre bundle description of {T}eichm\"uller
  theory}, J. Differential Geometry \textbf{3} (1969), 19--43. \MR{0276999}

\bibitem[ELP{\etalchar{+}}18]{ELPUW}
N.-E. Enkelmann, W.~L\"{u}ck, M.~Pieper, M.~Ullmann, and C.~Winges, \emph{On
  the {F}arrell-{J}ones conjecture for {W}aldhausen's {$A$}-theory}, Geom.
  Topol. \textbf{22} (2018), no.~6, 3321--3394. \MR{3858766}

\bibitem[EM16]{EM}
A.~{Engel} and M.~{Marcinkowski}, \emph{{Burghelea conjecture and asymptotic
  dimension of groups}}, J. Topol. Anal. (2016), arXiv:1610.10076.

\bibitem[ERW14]{ERW}
J.~Ebert and O.~Randal-Williams, \emph{Generalised {M}iller-{M}orita-{M}umford
  classes for block bundles and topological bundles}, Algebr. Geom. Topol.
  \textbf{14} (2014), no.~2, 1181--1204. \MR{3180831}

\bibitem[Fab99]{Faber}
C.\ Faber, \emph{A conjectural description of the tautological ring of the
  moduli space of curves}, Moduli of curves and abelian varieties, Aspects
  Math., E33, Vieweg, Braunschweig, 1999, pp.~109--129.

\bibitem[FH78]{FH}
F.~T. Farrell and W.~C. Hsiang, \emph{On the rational homotopy groups of the
  diffeomorphism groups of discs, spheres and aspherical manifolds}, Algebraic
  and geometric topology ({P}roc. {S}ympos. {P}ure {M}ath., {S}tanford {U}niv.,
  {S}tanford, {C}alif., 1976), {P}art 1, Proc. Sympos. Pure Math., XXXII, Amer.
  Math. Soc., Providence, R.I., 1978, pp.~325--337. \MR{520509}

\bibitem[FJ90]{FJ2}
F.~T. Farrell and L.~E. Jones, \emph{Rigidity and other topological aspects of
  compact nonpositively curved manifolds}, Bull. Amer. Math. Soc. (N.S.)
  \textbf{22} (1990), no.~1, 59--64. \MR{1001606}

\bibitem[FJ95]{FJ}
\bysame, \emph{The lower algebraic {$K$}-theory of virtually infinite cyclic
  groups}, $K$-Theory \textbf{9} (1995), no.~1, 13--30. \MR{1340838}

\bibitem[FQ90]{FQ}
M.~H. Freedman and F.~Quinn, \emph{Topology of 4-manifolds}, Princeton
  Mathematical Series, vol.~39, Princeton University Press, Princeton, NJ,
  1990. \MR{1201584}

\bibitem[Fre84]{Freedman}
M.~H. Freedman, \emph{The disk theorem for four-dimensional manifolds},
  Proceedings of the {I}nternational {C}ongress of {M}athematicians, {V}ol.\ 1,
  2 ({W}arsaw, 1983), PWN, Warsaw, 1984, pp.~647--663. \MR{804721}

\bibitem[FT95]{FT}
M.~H. Freedman and P.~Teichner, \emph{{$4$}-manifold topology. {I}.
  {S}ubexponential groups}, Invent. Math. \textbf{122} (1995), no.~3, 509--529.
  \MR{1359602}

\bibitem[Gab92]{Gabai}
D.~Gabai, \emph{Convergence groups are {F}uchsian groups}, Ann. of Math. (2)
  \textbf{136} (1992), no.~3, 447--510. \MR{1189862}

\bibitem[GGRW17]{GGRW}
S.~Galatius, I.~Grigoriev, and O.~Randal-Williams, \emph{Tautological rings for
  high-dimensional manifolds}, Compos. Math. \textbf{153} (2017), no.~4,
  851--866. \MR{3705243}

\bibitem[GMT06]{GMT}
S.~Galatius, I.~Madsen, and U.~Tillmann, \emph{Divisibility of the stable
  {M}iller-{M}orita-{M}umford classes}, J. Amer. Math. Soc. \textbf{19} (2006),
  no.~4, 759--779. \MR{2219303}

\bibitem[Got65]{Gottlieb}
D.~H. Gottlieb, \emph{A certain subgroup of the fundamental group}, Amer. J.
  Math. \textbf{87} (1965), 840--856. \MR{0189027}

\bibitem[Got79]{Gottlieb2}
\bysame, \emph{Poincar\'e duality and fibrations}, Proc. Amer. Math. Soc.
  \textbf{76} (1979), no.~1, 148--150. \MR{534407}

\bibitem[Gro81]{Gromov2}
M.~Gromov, \emph{Groups of polynomial growth and expanding maps}, Inst. Hautes
  {\'E}tudes Sci. Publ. Math. (1981), no.~53, 53--73. \MR{623534}

\bibitem[Gro87]{Gromov}
\bysame, \emph{Hyperbolic groups}, Essays in group theory, Math. Sci. Res.
  Inst. Publ., vol.~8, Springer, New York, 1987, pp.~75--263. \MR{919829}

\bibitem[GRW14]{GRW1}
S.~Galatius and O.~Randal-Williams, \emph{Stable moduli spaces of
  high-dimensional manifolds}, Acta Math. \textbf{212} (2014), no.~2, 257--377.
  \MR{3207759}

\bibitem[GRW17]{GRW3}
\bysame, \emph{{Homological stability for moduli spaces of high dimensional
  manifolds. II}}, Ann. of Math. \textbf{186} (2017), 127--204.

\bibitem[GRW18]{GRW2}
\bysame, \emph{Homological stability for moduli spaces of high dimensional
  manifolds. {I}}, J. Amer. Math. Soc. \textbf{31} (2018), no.~1, 215--264.
  \MR{3718454}

\bibitem[Hol67]{Holm}
P.~Holm, \emph{The microbundle representation theorem}, Acta Math. \textbf{117}
  (1967), 191--213. \MR{0208618}

\bibitem[Ji95]{Ji}
R.~Ji, \emph{Nilpotency of {C}onnes' periodicity operator and the idempotent
  conjectures}, $K$-Theory \textbf{9} (1995), no.~1, 59--76. \MR{1340840}

\bibitem[Kis64]{Kister}
J.~M. Kister, \emph{Microbundles are fibre bundles}, Ann. of Math. (2)
  \textbf{80} (1964), 190--199. \MR{0180986}

\bibitem[KL66]{KuiperLashof}
N.~H. Kuiper and R.~K. Lashof, \emph{Microbundles and bundles. {I}.
  {E}lementary theory}, Invent. Math. \textbf{1} (1966), 1--17. \MR{0216506}

\bibitem[KLR16]{KLR}
H.~Kammeyer, W.~L\"uck, and H.~R\"uping, \emph{The {F}arrell-{J}ones conjecture
  for arbitrary lattices in virtually connected {L}ie groups}, Geom. Topol.
  \textbf{20} (2016), no.~3, 1275--1287. \MR{3523058}

\bibitem[KS77]{KS}
R.~C. Kirby and L.~C. Siebenmann, \emph{Foundational essays on topological
  manifolds, smoothings, and triangulations}, Princeton University Press,
  Princeton, N.J.; University of Tokyo Press, Tokyo, 1977, With notes by John
  Milnor and Michael Atiyah, Annals of Mathematics Studies, No. 88.
  \MR{0645390}

\bibitem[Loo95]{Looijenga}
E.\ Looijenga, \emph{On the tautological ring of {$\mathcal{M}_g$}}, Invent.
  Math. \textbf{121} (1995), no.~2, 411--419.

\bibitem[LR05]{LR}
W.~{L}{\"u}ck and H.~Reich, \emph{The {B}aum-{C}onnes and the {F}arrell-{J}ones
  conjectures in {$K$}- and {$L$}-theory}, Handbook of {$K$}-theory. {V}ol. 1,
  2, Springer, Berlin, 2005, pp.~703--842. \MR{2181833}

\bibitem[{L}{\"u}c15]{Lueck}
W.~{L}{\"u}ck, \emph{{I}somorphism {C}onjectures in {K}- and {L}-theory},
  available on the author's homepage, 2015.

\bibitem[Mat76]{Mather}
M.\ Mather, \emph{Pull-backs in homotopy theory}, Canad. J. Math. \textbf{28}
  (1976), no.~2, 225--263. \MR{0402694}

\bibitem[Mil86]{Miller}
E.~Y. Miller, \emph{The homology of the mapping class group}, J. Differential
  Geom. \textbf{24} (1986), no.~1, 1--14. \MR{857372}

\bibitem[Mor84]{Morita}
S.~Morita, \emph{Characteristic classes of surface bundles}, Bull. Amer. Math.
  Soc. (N.S.) \textbf{11} (1984), no.~2, 386--388. \MR{752805}

\bibitem[MS06]{MS}
J.~P. May and J.~Sigurdsson, \emph{Parametrized homotopy theory}, Mathematical
  Surveys and Monographs, vol. 132, American Mathematical Society, Providence,
  RI, 2006. \MR{2271789}

\bibitem[MT01]{MT}
I.~Madsen and U.~Tillmann, \emph{The stable mapping class group and {$Q(\mathbb
  C \mathrm{P}^\infty_+)$}}, Invent. Math. \textbf{145} (2001), no.~3,
  509--544. \MR{1856399}

\bibitem[Mum83]{Mumford}
D.\ Mumford, \emph{Towards an enumerative geometry of the moduli space of
  curves}, Arithmetic and geometry, {V}ol. {II}, Progr. Math., vol.~36,
  Birkh\"auser Boston, Boston, MA, 1983, pp.~271--328.

\bibitem[MW07]{MW}
I.~Madsen and M.~Weiss, \emph{The stable moduli space of {R}iemann surfaces:
  {M}umford's conjecture}, Ann. of Math. (2) \textbf{165} (2007), no.~3,
  843--941. \MR{2335797}

\bibitem[Neo18]{Neofytidis}
C.~Neofytidis, \emph{Fundamental groups of aspherical manifolds and maps of
  non-zero degree}, Groups Geom. Dyn. \textbf{12} (2018), no.~2, 637--677.
  \MR{3813205}

\bibitem[{Ont}14]{Ontaneda}
P.~{Ontaneda}, \emph{{Riemannian Hyperbolization}}, ArXiv e-prints (2014),
  arXiv:1406.1730.

\bibitem[Ran73]{Ranicki3}
A.~A. Ranicki, \emph{Algebraic {$L$}-theory. {III}. {T}wisted {L}aurent
  extensions}, Algebraic K-theory, III Hermitian K-theory and geometric
  applications \textbf{343} (1973), 412--463.

\bibitem[Ran92]{Ranicki}
\bysame, \emph{Algebraic {$L$}-theory and topological manifolds}, Cambridge
  Tracts in Mathematics, vol. 102, Cambridge University Press, Cambridge, 1992.
  \MR{1211640}

\bibitem[Rob96]{Robinson}
D.~J.~S. Robinson, \emph{A course in the theory of groups}, second ed.,
  Graduate Texts in Mathematics, vol.~80, Springer-Verlag, New York, 1996.
  \MR{1357169}

\bibitem[R{\"u}p16]{R}
H.~R{\"u}ping, \emph{The {F}arrell-{J}ones conjecture for {$S$}-arithmetic
  groups}, J. Topol. \textbf{9} (2016), no.~1, 51--90. \MR{3465840}

\bibitem[RV18]{RV}
H.~Reich and M.~Varisco, \emph{Algebraic {$K$}-theory, assembly maps,
  controlled algebra, and trace methods}, Space---time---matter, De Gruyter,
  Berlin, 2018, pp.~1--50. \MR{3792301}

\bibitem[RW16]{RW}
O.~{Randal}-{Williams}, \emph{An upper bound for the pseudoisotopy stable
  range}, Mathematische Annalen \textbf{368} (2016), no.~3, 1081--1094.

\bibitem[{Wei}15]{Weiss}
M.~{Weiss}, \emph{{Dalian notes on Pontryagin classes}}, ArXiv e-prints (2015),
  arXiv:1507.00153.

\bibitem[WW89]{WW2}
M.~Weiss and B.~Williams, \emph{Automorphisms of manifolds and algebraic
  {$K$}-theory. {II}}, J. Pure Appl. Algebra \textbf{62} (1989), no.~1,
  47--107. \MR{1026874}

\end{thebibliography}

\end{document}